\DeclareFontFamily{U}{BOONDOX-calo}{\skewchar\font=45 }
\DeclareFontShape{U}{BOONDOX-calo}{m}{n}{
  <-> s*[1.05] BOONDOX-r-calo}{}
\DeclareFontShape{U}{BOONDOX-calo}{b}{n}{
  <-> s*[1.05] BOONDOX-b-calo}{}
\DeclareMathAlphabet{\mathcalboondox}{U}{BOONDOX-calo}{m}{n}
\SetMathAlphabet{\mathcalboondox}{bold}{U}{BOONDOX-calo}{b}{n}
\DeclareMathAlphabet{\mathbcalboondox}{U}{BOONDOX-calo}{b}{n}
\setlist[itemize]{leftmargin=*}
\setlist[enumerate]{leftmargin=*}
\newtheorem{theorem}{Theorem}
\newtheorem{corollary}[theorem]{Corollary}
\newtheorem{definition}[section]{Definition}
\newtheorem{lemma}[theorem]{Lemma}
\newtheorem{proposition}[theorem]{Proposition}
\title{Scaffold for the polyhedral\\embedding of cubic graphs.}
\author[F. Aguilar]{Flor Aguilar $^1$}
\address{$^1$ Instituto de Matem\'aticas, UNAM }
\email{ $^1$ flor@matem.unam.mx}
\author[G.Araujo-Pardo]{Gabriela Araujo-Pardo $^2$}
\address{$^2$ Instituto de Matemáticas, UNAM}
\email{$^2$ garaujo@math.unam.mx}
\author[N. Garc\'{i}a-Col\'{i}n]{Natalia Garc\'{i}a-Col\'{i}n $^3$}
\address{$^3$ CONACYT Research Fellow - INFOTEC Centro de
  Investigación en Tecnologías de la Información y Comunicación,
  Mexico. Corresponding Author.} 
   \email{$^3$  natalia.garcia@infotec.mx}
\begin{document}
\maketitle
\begin{abstract}Let $G$ be a cubic graph and $\Pi$ be a polyhedral embedding of this graph. The \textit{extended graph}, $G^{e},$ of $\Pi$ is the graph whose set of vertices is $V(G^{e})=V(G)$ and whose set of edges $E(G^{e})$ is equal to $E(G) \cup \mathcal{S}$,  where $\mathcal{S}$ is constructed as follows: given two vertices $t_0$ and $t_3$ in $V(G^{e})$ we say $[t_0 t_3] \in \mathcal{S},$ if there is a $3$--path, $(t_0 t_1 t_2 t_3) \in G$ that is a $\Pi$-- facial subwalk of the embedding. We prove that there is a one to one correspondence between the set of possible extended graphs of $G$ and polyhedral embeddings of $G$.
\end{abstract}
\section{Introduction}
Our motivation for studying polyhedral embeddings of cubic graphs is twofold. On one hand, it is interesting as the natural alternative point of view on combinatorial characterisations of triangulations of surfaces \cite {Arocha}, given that the dual structure of a triangulation is precisely a polyhedral embedding of a 3-regular graph in a surface. On the other hand, graph embeddings of $3$-regular graphs are interesting in their own right as a plethora of papers on  the subject prove, particularly as related to Gr\"unbaum's  conjectured generalization of the four color theorem:  \emph{If a cubic graph admits a polyhedral embedding in an orientable surface, then it is $3$-edge colorable} \cite{BM2006}.

More precisely, in \cite {Arocha} they prove that the information on the size of the pairwise intersection of triangles in a triangulation suffices in order to determine its whole combinatorial structure. However, it is known that if we only have information on the pairs of triangles that intersect edge to edge (that is, the dual graph of the triangulation) then we cannot uniquely determine the whole incidence structure of the triangulation. For example, in \cite{BM2006} they prove that some connected cubic graphs can be embedded into more than one surface. 

In like manner, \emph{in this paper we prove that some additional combinatorial information suffices to uniquely determine a  polyhedral embedding of a $3$-regular graph in a surface with no boundary.} For a more precise statement of our main result (Theorem \ref{Contribution}) we need to introduce some terminology.

A topological map of a graph into a surface is called a \textit{graph embedding}. If we consider the graph $G$ together with its embedding $\Pi$, we say that $G$ is \textit{$\Pi$--embedded}. Each disjoint region of the complement of the image of an embedded graph is called a \textit{face of the embedding}. The closed walk in the underlying graph $G$ that corresponds to the boundary of a face is called a \textit{$\Pi$--facial walk}. The embedding $\Pi$ determines a \textit{set of $\Pi$--facial walks}. Each edge is either contained in two $\Pi$--facial walks or it appears twice in the same facial walk. If a $\Pi$--facial walk is a cycle, it is also called a \textit{$\Pi$--facial cycle.} Two embeddings of $G$ are \textit{equivalent} if they have the same set of facial walks, up to automorphisms of $G$.

Let $\mathcal{P}_1$ and $\mathcal{P}_2$ be distinct $\Pi-$facial walks. We say that $\mathcal{P}_1$ and $\mathcal{P}_2$ meet properly if the intersection of $\mathcal{P}_1$ and $\mathcal{P}_2$ is either empty, a single vertex, or an edge. In this paper we will consider only embeddings of cubic simple graphs whose facial walks meet properly, this latter characteristic defines a polyhedral embedding: 

\begin{definition}\label{poly}
$\Pi$ is said to be a \textit{polyhedral embbeding} of a graph $G$, if every $\Pi$-facial walk is a cycle and any two $\Pi-$facial cycles meet properly. 
\end{definition}
The set of all the $\Pi$--facial cycles is called \textit{$\Pi$--facial cycle system.}

An indication that there is some non-triviality in determining polyhedral embeddings is that a significant part of Ringel and Youngs' Map Color Theorem \cite{R1974} was to determine which complete graphs have such embeddings, even though for a complete graph $K_n$ (with $n \geq 5$) a polyhedral embedding is necessarily a triangulation. Furthermore, in \cite{BM2001_Existence} it is proven that the decision problem about the existence of polyhedral embeddings of a graph is NP-complete. The problem remains NP-complete even if it is restricted to the case of embeddings in orientable surfaces and it is required that the graph is $6$-connected. 

Concerning the uniqueness of the embedding, recall that the \textit{face-width} is defined as the minimum integer $r$ such that $G$ has $r$ facial walks whose union contains a cycle which is noncontractible on the surface. Whitney \cite{W1933} proved that every $3$-connected planar graph has an essentially unique embedding in the plane. Robertson and Vitray \cite{RV1990} extended the previous result to an arbitrary surface of genus $g$ by assuming that the  \emph{face-width} is at least $2g+3.$ Seymour and Thomas \cite{S1996} and Mohar \cite{B1995} improved the bound on the face width to $O(\frac{\log g}{ \log \log g})$. Moreover, Robertson and Vitray \cite{RV1990} proved the following result:
\begin{proposition}\label{layout}
An embedding of a graph $G$ is polyhedral if and only if $G$ is 3-connected and the embedding has face-width at least 3.
\end{proposition}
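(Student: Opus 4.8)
I would prove the two implications separately, each by contraposition, and the whole argument runs on the standard topological reformulation of face-width: for a $2$-cell embedding $\Pi$ of $G$ in a surface $\Sigma$, $\mathrm{fw}(\Pi)$ equals the minimum of $|C\cap G|$ over all noncontractible simple closed curves $C\subset\Sigma$, where one may assume $C$ meets $G$ only at vertices, transversally. I would first record this as a lemma (routine surgery: a crossing of an edge slides to a nearby vertex; a curve can be isotoped so that it threads a cyclic sequence of corners and meets $G$ exactly at the vertices of those corners; conversely a union of $r$ facial walks carrying a noncontractible cycle gives such a curve meeting $G$ in at most $r$ vertices). The useful consequence is that if $C$ meets $G$ exactly in vertices $v_1,\dots,v_k$ in cyclic order, each arc of $C$ between consecutive $v_i$'s lies in a single open face, and open faces are open discs. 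Throughout, embeddings are taken to be $2$-cell, which is automatic for polyhedral embeddings.

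\textbf{Polyhedral implies $3$-connected and $\mathrm{fw}(\Pi)\ge 3$.} For this implication, assume $\Pi$ is polyhedral. That $\mathrm{fw}(\Pi)\ge 3$ I would obtain by excluding a noncontractible curve $C$ with $|C\cap G|\le 2$: with $0$ vertices $C$ lies in one open disc; with one vertex $v$, the single arc of $C$ joins two occurrences of $v$ on the facial cycle $\partial F$, but a cycle has only one, so $C\subseteq\overline F$; with two vertices $u,v$, either the arcs lie in a common face $F$ (so $C\subseteq\overline F$) or in distinct faces $F_1,F_2$, whence $u,v\in\partial F_1\cap\partial F_2$, forcing that intersection to be exactly the edge $uv$ by proper meeting, and then $C$ is homotopic to a concatenation of two loops lying in the discs $\overline{F_1},\overline{F_2}$ --- in every case $C$ is contractible, a contradiction. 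That $G$ is $3$-connected I would get by ruling out small cuts: a cutvertex $w$ would have a corner between an edge into one component of $G-w$ and an edge into another, and the facial walk through it enters $w$ from one component, leaves into the other, and cannot return without reusing $w$, so it is not a cycle; a $2$-cut $\{u,v\}$ with sides $A,B$ would, via a corner at $u$ between an $A$-edge and a $B$-edge (present by $2$-connectivity, just established), give a facial cycle $W$ through both $u$ and $v$ that does not use the edge $uv$, and there would be a second facial cycle through $u$ and $v$ as well (from another such corner, or, when $uv\in E(G)$, from a face incident to the edge $uv$), and these two cycles share the two vertices $u,v$ but not a single edge --- again contradicting proper meeting. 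Hence $G$ is $3$-connected.

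\textbf{$3$-connected and $\mathrm{fw}(\Pi)\ge 3$ implies polyhedral.} For the converse, assume $G$ is $3$-connected and $\mathrm{fw}(\Pi)\ge 3$. I would first show every facial walk $\partial F$ is a cycle. If $\partial F$ repeated a vertex $v$, joining the two distinct corner-occurrences of $v$ by an arc inside $F$ gives a closed curve $C$ with $C\cap G=\{v\}$ through which $C$ passes transversally; it cannot be noncontractible ($\mathrm{fw}(\Pi)\le 1$) and cannot bound a disc $D$ either, since $G-v$ is connected and so lies on one side of $C$, whence an edge-end at $v$ on the other side would have its far endpoint on the wrong side and force that edge to meet $C$ off $v$. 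If $\partial F$ repeated an edge $e$, the analogous curve crosses $e$ once, is otherwise in $F$, and is ruled out the same way (a bounding disc separates the two endpoints of $e$, although $G$ minus an interior point of $e$ is connected). Next I would show two facial cycles $C_1=\partial F_1$, $C_2=\partial F_2$ with $F_1\neq F_2$ meet properly. Their intersection $H$ is a proper subgraph of each cycle (distinct faces have distinct boundary cycles since $G$ is $3$-connected); if $H$ were connected it would be a path with an internal vertex $x$ (it is neither a vertex nor an edge), and $C_1$ and $C_2$ would both use the unique corner at $x$ between the two edges of $H$, impossible since each corner lies on one facial walk. So $H$ is disconnected; picking $u,v$ in distinct components of $H$ and joining them by an arc in $F_1$ and an arc in $F_2$ yields a closed curve $C_0$ with $C_0\cap G=\{u,v\}$, noncontractible because a bounding disc would have interior disjoint from $G$ except possibly for the edge $uv$, forcing $F_1=F_2$ or $uv\in E(C_1)\cap E(C_2)\subseteq H$ (contradicting $u,v$ in different components); then $\mathrm{fw}(\Pi)\le 2$, a contradiction. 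Therefore $\Pi$ is polyhedral.

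\textbf{The main obstacle.} The content is elementary; all the work is topological bookkeeping. The two genuine sticking points are: (i) setting up the curve description of face-width cleanly and rigorously --- curves through vertices, transversality, and the nonorientable case --- so that ``$|C\cap G|$'' is unambiguous and behaves; and (ii) at each point where a constructed curve could a priori be contractible, correctly reading off the intended combinatorial obstruction (a small vertex cut, two facial cycles meeting improperly, or a facial walk with a repeated vertex) from the disc it bounds, which requires checking that that disc cannot be ``empty''. The case split on whether $uv$ is an edge, appearing in both directions, is the only place a careful write-up must slow down; everything ultimately reduces to Jordan-curve arguments inside $2$-cell embeddings.
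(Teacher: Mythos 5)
There is nothing in the paper to compare your argument against: Proposition \ref{layout} is quoted as a known theorem of Robertson and Vitray \cite{RV1990} and the paper gives no proof of it. Judged on its own, your outline is the standard argument for this equivalence (curve reformulation of face-width, small-curve surgery for one direction, corner/rotation bookkeeping for the other) and is essentially sound. Three caveats if you were to write it out. First, your aside that being $2$-cell is ``automatic for polyhedral embeddings'' is not true under the paper's Definition \ref{poly} alone (a $3$-connected planar graph embedded inside a disc on the torus has all facial walks cycles meeting properly but is not cellular); the equivalence of the curve definition of face-width with the facial-walk definition used in the paper needs cellularity as a standing hypothesis, so state it as such rather than derive it. Second, in the $2$-cut step the two facial cycles you produce need not ``share the two vertices $u,v$ but not a single edge'' --- they may share edges elsewhere; what you actually need (and do have) is that their intersection contains two vertices that are not the ends of a common edge of both, which already violates proper meeting, and in the case $uv\in E(G)$ the cleanest second cycle is indeed a facial cycle through the edge $uv$, which is automatically distinct from the one using an $A$--$B$ corner at $u$. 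Third, in the proper-meeting direction the disc $D$ bounded by your curve through $u,v$ need not have interior ``disjoint from $G$ except possibly for the edge $uv$'': since $G-\{u,v\}$ is connected it lies entirely on one side of the curve, and that side may be $D$ itself, so the argument must be run symmetrically (at the level of the local rotation at $u$ and $v$, where on the $G$-free side the only possible edge-end is $uv$, again forcing $uv\subseteq \partial F_1\cap\partial F_2$ or $F_1=F_2$). These are exactly the bookkeeping points you flagged, and none of them derails the proof.
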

It is also known that for each surface $S$, there is a constant $\zeta= \zeta(S)$ such that every $3$-connected graph admits at most $\zeta$ embeddings of face width greater than three \cite{BM2001_Flexibility}, then it can be concluded that a graph may have many different polyhedral embedings in the same or different surfaces.

\subsection{The extended graph of an embedding}
The \textit{extended graph} of the polyhedral embedding $\Pi$, $G^{e}(\Pi),$ is the graph whose set of vertices is $V(G^{e}(\Pi))=V(G)$ and whose set of edges $E(G^{e}(\Pi))$ is equal to $E(G) \cup \mathcal{S}$.  We will call $\mathcal{S}$ the set of \emph{scaffold edges} and construct it as follows:
given two vertices $t_0$ and $t_3$ in $V(G)$, $[t_0 t_3] \in \mathcal{S},$ if there are vertices $t_1$ and $t_2$, different from $t_0$ and $t_3$, such that $(t_0 t_1 t_2 t_3)$ is a  $\Pi$-- facial subwalk. In such case, we say that the \emph{$3$--path of $G$ corresponding to} $[t_0 t_3],$  is $(t_0 t_1 t_2 t_3)$. Notice that the scaffold edges may be double (but not triple or more). That is, if $[t_0 t_3] \in \mathcal{S}$ and its corresponding path is $(t_0 t_1 t_2 t_3),$ there may be a second $3$--path between $t_0$ and  $t_3$, internally disjoint from $(t_0 t_1 t_2 t_3),$ say 
$(t_0 t'_1 t'_2 t_3),$ that \emph{also} corresponds to $[t_0 t_3].$ In this case we say $[t_0 t_3]$ is a \emph{double scaffold edge}, and we denote this by a double bracket $[[t_0 t_3]].$ It will become obvious later, in Proposition \ref{pizzageneralization}, that this only happens when $[[t_0 t_3]]$ appears as a \emph{chord} of a $6$-cycle which is a $\Pi$--facial cycle of the embedding, or when two $4$--cicles intersect in one edges. Notice that $E(G)\subset E(G^{e}(\Pi)).$ As such, we will refer to the edges in $E(G)$ as simply \textit{edges.}

Given two different polyhedral embeddings $\Pi$ and $\Pi'$ is not obvious that their corresponding extended graphs $G^{e}(\Pi)$ and $G^{e}(\Pi')$ are combinatorically different. Figuring this out is precisely the aim of this paper.

\begin{theorem}\label{Contribution}
Let $G$ be a finite cubic graph. Then there is a one to one correspondence between the set of embeddings of $G$, $\mathfrak{P}(G)=\{\Pi | \Pi \text{ is an embedding of } G\}$, and the set of extended graphs $\{G^e(\Pi)| \Pi \in \mathfrak{P}(G)\}$.
\end{theorem}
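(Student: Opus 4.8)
The plan is to show that the map $\Pi \mapsto G^e(\Pi)$ is injective on $\mathfrak{P}(G)$; surjectivity onto $\{G^e(\Pi) \mid \Pi \in \mathfrak{P}(G)\}$ is immediate from the definition of the target set, so the whole content is that two inequivalent embeddings cannot produce the same extended graph. Equivalently, I would argue that from $G^e(\Pi)$ alone — that is, from the graph $G$ together with the distinguished set $\mathcal{S}$ of (possibly double) scaffold edges — one can reconstruct the $\Pi$--facial cycle system, which by Definition \ref{poly} and the discussion of equivalence determines $\Pi$ up to automorphisms of $G$.

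The key steps, in order, are as follows. First, I would recover the set of $3$--paths of $G$ "coming from" the embedding: since $G$ is cubic and simple, a scaffold edge $[t_0t_3]$ together with knowledge of which neighbours of $t_0$ and $t_3$ are involved pins down the path $(t_0t_1t_2t_3)$ — and here the cubicity is essential, because at an internal vertex $t_1$ of a facial subwalk the face "turns" onto one of the two edges at $t_1$ other than $t_0t_1$, and I must check that $G^e(\Pi)$ records enough to tell which. Second, I would show that these $3$--paths glue into closed walks in an essentially unique way: consecutive facial subwalks $(t_0t_1t_2t_3)$ and $(t_1t_2t_3t_4)$ of the same facial cycle overlap in a $2$--path, so the facial cycles are exactly the maximal closed chains obtained by successively overlapping scaffold $3$--paths, and the polyhedrality hypothesis (any two facial cycles meet properly, every facial walk is a cycle) guarantees this chaining is unambiguous — a given $2$--path $t_1t_2t_3$ lies on a unique facial cycle on each "side", and the scaffold edge incident data tells the two apart. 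Third, I would invoke Proposition \ref{pizzageneralization} to handle the degenerate short faces ($4$--cycles, $6$--cycles with a chord) where a scaffold edge is double: the double bracket $[[t_0t_3]]$ is precisely the signal that disambiguates these cases, so that even there the facial cycle system is determined. Finally, once the facial cycle system is in hand, Definition \ref{poly} says it \emph{is} the polyhedral embedding, and two embeddings with the same facial cycle system up to $\mathrm{Aut}(G)$ are equivalent, giving injectivity.

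I would organise the write-up around a "local reconstruction" lemma (from a vertex $v$ and the scaffold edges at $v$, read off the rotation-like cyclic data, i.e. the pairing of the three edges at $v$ into the three corners/face-angles at $v$) followed by a "global assembly" lemma (these local pairings, being mutually consistent, determine the facial walks). Phrasing the local data as a pairing of edge-ends at each vertex makes the cubic case clean: each vertex of a cubic graph in a polyhedral embedding has exactly three face-corners, each corner being an unordered pair of the three incident edges, and every edge is used in exactly two corners (at its two ends) — this is exactly a combinatorial/rotation-type encoding, and the scaffold edges are what lets us recover the corners.

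The main obstacle I expect is the disambiguation step — proving that the scaffold set $\mathcal{S}$ (with its multiplicities) really does distinguish, at each vertex, which pairs of incident edges form face-corners, and in particular that no two genuinely different polyhedral embeddings of $G$ induce the same $\mathcal{S}$. The subtlety is that a scaffold edge only records the \emph{endpoints} $t_0,t_3$ of a facial $3$--path, not the interior; when several $3$--paths in $G$ share the same endpoints one must use the proper-meeting condition and the double-bracket bookkeeping (via Proposition \ref{pizzageneralization}) to argue there is still no ambiguity. I would expect to spend most of the proof isolating and ruling out these short-face and multiple-$3$--path configurations, and then the passage from corners to facial walks, and from facial walks to the embedding, is the formal and comparatively routine part.
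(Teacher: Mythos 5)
Your overall strategy --- reconstruct the $\Pi$--facial cycle system from $G^e$ by reading each scaffold edge as a facial $3$--subwalk and chaining overlapping $3$--subwalks into facial cycles, so that injectivity of $\Pi\mapsto G^e(\Pi)$ follows --- is exactly the paper's strategy, and you correctly locate the entire difficulty in the situation where several internally disjoint $3$--paths share the same pair of endpoints, so that $\mathcal{S}$ records the endpoints of a facial subwalk but not its interior. The unambiguous half of your plan is precisely the paper's Theorem \ref{thm:easy}, and your use of Proposition \ref{pizzageneralization} for the double-bracket cases matches the paper's bookkeeping.

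The gap is in how you propose to close the ambiguous case. You say you expect to ``isolate and rule out'' the configurations in which the scaffold data fails to determine, at a vertex, which pairs of incident edges form face-corners; but such configurations cannot be ruled out, and the ``local reconstruction lemma'' you propose (scaffold edges at $v$ determine the corner pairing at $v$) is false in general. The paper's Lemma \ref{containedbutterfly} shows that an unresolvable local ambiguity (a fork whose disjunction cannot be solved) forces a butterfly subgraph, and the two theorems that follow show this in turn forces $G$ to be the Petersen graph or the Franklin graph --- graphs that genuinely contain forks no local argument can disambiguate (for Petersen, $\mathcal{S}=E(K_{10})\setminus E(P)$ is automorphism-invariant and so carries no local corner information at all). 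Uniqueness there is recovered only by a global rigidity argument: the unresolvable fork propagates until the whole underlying graph and its whole scaffold set are forced to be one specific object, so the correspondence holds for that reason, not because corners can be read off locally. As written, your ``global assembly'' step would stall at the first fork of the Petersen graph; to complete the proof you would need either this butterfly/Petersen/Franklin classification or some other certificate of uniqueness in the cases where local disambiguation provably fails.
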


\section{Preliminaries}
Firstly, note that as $G$ is a cubic graph and $\Pi$ is an embedding of $G$ then every path of length two is in a $\Pi$--facial cycle. This follows as there are three faces incident to every vertex of the embedding, thus any path of length two is contained in one of the faces incident to the vertex in the center of the path.

\begin{proposition}\label{edge}
Let $n\geq 5$ and $\mathcal{C}=\ (t_0 t_1 ... t_{n-1} t_0)$ be a cycle in $G$ corresponding to a $\Pi$--facial cycle, then there is no edge $(t_i t_j)$ in $E(G),$ with $i\neq j$, $|i-j|\geq 2$ and this difference taken $\mod n$.
\end{proposition}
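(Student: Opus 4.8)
The plan is to argue by contradiction, exploiting the fact that a polyhedral embedding forces any two facial cycles to meet properly (Definition \ref{poly}), together with the observation recorded just above the statement: in a cubic graph every path of length two lies on a facial cycle. Suppose $\mathcal{C}=(t_0 t_1 \dots t_{n-1} t_0)$ is a $\Pi$--facial cycle with $n \geq 5$ and suppose, for contradiction, that there is an edge $e=(t_i t_j) \in E(G)$ with the indices at cyclic distance $d$, where $2 \leq d \leq n-2$. Without loss of generality relabel so that $e=(t_0 t_k)$ with $2 \leq k \leq n-2$. The edge $e$ is not an edge of $\mathcal{C}$, so it splits $\mathcal{C}$ into two arcs $A=(t_0 t_1 \dots t_k)$ and $B=(t_k t_{k+1} \dots t_{n-1} t_0)$, each of length at least $2$, whose union with $e$ gives two cycles $C_1 = A \cup e$ and $C_2 = B \cup e$ in $G$.

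Next I would locate a second facial cycle and derive the improper intersection. Since $G$ is cubic and $t_0$ already has the two $\mathcal{C}$--edges $(t_{n-1}t_0)$ and $(t_0 t_1)$ incident to it, the edge $e$ is the third edge at $t_0$; likewise $e$ is the third edge at $t_k$. Consider the path of length two $(t_1 t_0 t_k)$ (using the $\mathcal{C}$--edge $t_0t_1$ and the edge $e$). By the preliminary observation this path lies on some $\Pi$--facial cycle $\mathcal{D}$. I claim $\mathcal{D} \neq \mathcal{C}$: indeed $e \notin E(\mathcal{C})$ while $e \in E(\mathcal{D})$. Now $\mathcal{C}$ and $\mathcal{D}$ are distinct facial cycles of a polyhedral embedding, so they must meet properly, i.e. in the empty set, a single vertex, or a single edge. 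But $\mathcal{D}$ contains the vertices $t_1$, $t_0$, $t_k$ of $\mathcal{C}$, and it contains the $\mathcal{C}$--edge $(t_0 t_1)$; since $2 \leq k \leq n-2$, the vertex $t_k$ is distinct from both $t_0$ and $t_1$, and it is not an endpoint of the edge $(t_0 t_1)$. Hence $\mathcal{C} \cap \mathcal{D}$ contains an edge together with a vertex outside that edge, so it is strictly larger than a single edge — contradicting proper meeting.

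The step I expect to require the most care is the bookkeeping in the last paragraph: one must be sure that $t_k \notin \{t_0, t_1\}$, which is exactly where the hypotheses $n \geq 5$ and $2 \leq |i-j| \pmod n$ (i.e. $2 \leq k \leq n-2$) are used — if $k$ were $1$ or $n-1$ then $e$ would be a $\mathcal{C}$--edge, and the case $k=n-1$ is symmetric to $k=1$ only after relabelling, so one should fix the orientation of $\mathcal{C}$ and the choice of the length--two path carefully. A secondary subtlety is that the ``cycle'' $\mathcal{D}$ really is a cycle and not merely a closed walk; this is guaranteed because $\Pi$ is polyhedral, so every facial walk is a cycle (Definition \ref{poly}), and I would invoke that explicitly. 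Once these points are pinned down, the contradiction with proper meeting is immediate, and the proposition follows.
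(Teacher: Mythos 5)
Your proof is correct and follows essentially the same route as the paper: argue by contradiction, use cubicity to force the facial cycle through the chord $(t_i t_j)$ to contain an edge of $\mathcal{C}$ at an endpoint, and contradict proper meeting. The only cosmetic difference is that the paper witnesses the improper intersection by two edges of $\mathcal{C}$ (one at each endpoint of the chord), whereas you use one edge plus the extra vertex $t_k$; both suffice.
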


\begin{proof}
We will proceed by contradiction. Suppose that $(t_i t_j) \in E(G)$ where $|i-j|\geq 2$ and let $\mathcal{C}_{ij}$ be a facial cycle that passes through the edge $(t_i t_j)$. Since the graph has degree three, $\mathcal{C}_{ij}$ contains either the edge $(t_{i}t_{i+1})$ or $(t_{i-1}t_i)$, and one of $(t_{j}t_{j+1})$ or $(t_{j-1}t_j)$. This contradicts the definition of polyhedrality, since the $\Pi_{ij}$--facial cycle would intersect $\mathcal{C}$ in two edges. 
\end{proof}

\begin{proposition}\label{pathtwo}
Let $n\geq 5$ and $\mathcal{C}=\ (t_0 t_1 ... t_{n-1} t_0)$ be a cycle in $G$ corresponding to a $\Pi$--facial cycle, then for all $t_i, t_j$, there is no $t_k \in V(G)$ such that $(t_i t_k t_j)$ is a path of $G$,  where $|i-j|\geq 2$ and this difference is taken $\mod n$.
\end{proposition}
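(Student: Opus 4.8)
The plan is to argue by contradiction in the spirit of the proof of Proposition \ref{edge}: a $2$--path joining two non-consecutive vertices of $\mathcal{C}$ would force a second $\Pi$--facial cycle to sit ``next to'' $\mathcal{C}$ while sharing two of its vertices, and then the ``meet properly'' condition would manufacture a chord of $\mathcal{C}$, which Proposition \ref{edge} forbids.

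First I would clear away the case in which the middle vertex lies on $\mathcal{C}$. If $t_k=t_\ell\in V(\mathcal{C})$, then $(t_it_\ell)$ and $(t_\ell t_j)$ are edges of $G$ with both endpoints on $\mathcal{C}$, so Proposition \ref{edge} forces each of them to join consecutive vertices of $\mathcal{C}$; hence $\ell\equiv i\pm 1$ and $\ell\equiv j\pm 1\pmod n$, so $j\equiv i$ or $j\equiv i\pm 2\pmod n$. Since the vertices of a path are distinct this leaves only $j\equiv i\pm 2\pmod n$, and then, as $n\geq 5$, the ``path'' $(t_it_kt_j)$ is nothing but the subpath $(t_i\,t_{i\pm1}\,t_{i\pm2})$ of $\mathcal{C}$ itself. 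So, apart from this degenerate subwalk of $\mathcal{C}$, we may assume $t_k\notin V(\mathcal{C})$.

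Now assume $t_k\notin V(\mathcal{C})$ and $|i-j|\geq 2$ (mod $n$). Because $G$ is cubic, $t_k$ has exactly three incident edges and hence exactly three incident faces, so any two of the three edges at $t_k$ — in particular $(t_kt_i)$ and $(t_kt_j)$ — are rotationally consecutive at $t_k$ and thus bound a common face; let $\mathcal{C}'$ be that $\Pi$--facial cycle, which is a genuine cycle since $\Pi$ is polyhedral (Definition \ref{poly}). Its boundary walk contains the subwalk $t_i\,t_k\,t_j$, so $t_i,t_j\in V(\mathcal{C}')$, while $t_k\in V(\mathcal{C}')\setminus V(\mathcal{C})$ shows $\mathcal{C}'\neq\mathcal{C}$. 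Thus $\mathcal{C}$ and $\mathcal{C}'$ are distinct $\Pi$--facial cycles both containing the two distinct vertices $t_i$ and $t_j$; by polyhedrality they meet properly, so their intersection, containing at least two vertices, must be a single edge, necessarily $(t_it_j)$. But then $(t_it_j)\in E(G)$ is an edge of $G$ with both endpoints on $\mathcal{C}$ and $|i-j|\geq 2$ (mod $n$), contradicting Proposition \ref{edge}.

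The argument is genuinely short, and there is no serious obstacle; the only spot demanding care is the claim that the $2$--path $(t_it_kt_j)$ lies on a single $\Pi$--facial cycle. This is exactly where cubicity of $G$ enters: with larger degree at $t_k$ the edges $(t_kt_i)$ and $(t_kt_j)$ need not be consecutive in the rotation at $t_k$, so they need not lie on a common face, and the reduction to a ``chord of $\mathcal{C}$'' would break down. Once that step is in place, everything follows from Definition \ref{poly} and Proposition \ref{edge}.
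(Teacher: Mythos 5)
Your proof is correct and follows essentially the same route as the paper's (which merely notes that the $2$--path lies on some $\Pi$--facial cycle and then argues as in Proposition \ref{edge}): you locate the facial cycle $\mathcal{C}'$ through $(t_i t_k t_j)$ via cubicity at $t_k$ and show that it meets $\mathcal{C}$ improperly, your only cosmetic deviation being that you derive the contradiction from the intersection containing the two non-adjacent vertices $t_i,t_j$ rather than from $\mathcal{C}'$ sharing an edge of $\mathcal{C}$ at each of $t_i$ and $t_j$. Your preliminary observation that the statement, read literally, admits the subpath $(t_i\,t_{i\pm1}\,t_{i\pm2})$ of $\mathcal{C}$ as a degenerate ``counterexample,'' so that one must implicitly assume the $2$--path is not a subpath of $\mathcal{C}$ (equivalently $t_k\notin V(\mathcal{C})$), is a point the paper glosses over and is worth recording.
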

\begin{proof}
Notice that every path of length two belongs to a $\Pi$--facial cycle, and then the proof follows by using similar arguments to those in the proof of Proposition \ref{edge}. 
\end{proof}

\begin{proposition}\label{3}
Let $G$ be a cubic graph. If $\mathcal{C}$ is a $3$--cycle of $G$ then $\mathcal{C}$ is a facial cycle in every polyhedral embedding of $G$. 
\end{proposition}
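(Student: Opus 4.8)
The plan is to fix an arbitrary polyhedral embedding $\Pi$ of $G$, write $\mathcal{C}=(t_0t_1t_2t_0)$, and show that $\mathcal{C}$ is one of the $\Pi$--facial cycles; since $\Pi$ is arbitrary, this is exactly the assertion. I would start from the remark recorded at the top of the Preliminaries: in a cubic graph every path of length two lies in a $\Pi$--facial cycle. Applying this to the two length-two paths $(t_1t_0t_2)$ and $(t_0t_1t_2)$ of $G$, one picks $\Pi$--facial cycles $F_0$ and $F_1$ containing them respectively. (In fact each of $F_0,F_1$ is uniquely determined, being the single face incident to the central vertex of its path that uses both edges of the path, but uniqueness is not needed below.)

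The crucial observation is that $F_0$ and $F_1$ both run through all three vertices $t_0,t_1,t_2$, since each contains a length-two path whose vertex set is $\{t_0,t_1,t_2\}$. By Definition \ref{poly}, two \emph{distinct} $\Pi$--facial cycles meet properly, so the intersection of any two of them — in particular its vertex set — has size at most $2$. As $V(F_0)\cap V(F_1)\supseteq\{t_0,t_1,t_2\}$ has size $3$, one concludes $F_0=F_1=:F$. Reading $F$ as $F_0$, its two edges at $t_0$ are $t_0t_1$ and $t_0t_2$; reading it as $F_1$, its two edges at $t_1$ are $t_1t_0$ and $t_1t_2$. Hence the cycle $F$ contains all three edges of $\mathcal{C}$, and since a cycle cannot properly contain a shorter cycle, $F=\mathcal{C}$. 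Thus $\mathcal{C}$ is a $\Pi$--facial cycle, as desired.

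I do not foresee a genuine obstacle: the whole argument is a single application of the "meet properly" axiom to the pair of facial cycles sitting at two of the triangle's corners. The two points that deserve a careful word are that, in a polyhedral embedding, facial walks are honest \emph{cycles} (so that "$F$ contains the three edges of $\mathcal{C}$" really forces $F=\mathcal{C}$, rather than $F$ being a longer walk repeating a vertex), and that "meet properly" constrains the \emph{vertex set} of the intersection (so that sharing three vertices is a genuine contradiction); both are immediate from Definition \ref{poly}. Note that, unlike Propositions \ref{edge} and \ref{pathtwo}, this argument uses no lower bound on the lengths of the facial cycles involved — which matters, since the facial cycles flanking a triangle may themselves have length $3$ or $4$. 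One could also phrase the contradiction slightly more economically as: two distinct facial cycles both containing the edges $t_0t_1$ and $t_0t_2$ already intersect in more than a single edge.
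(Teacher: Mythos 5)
Your argument is correct, but it follows a slightly different route from the paper's. The paper takes the single $\Pi$--facial cycle containing the $2$--path $(t_0t_1t_2)$ and argues that it must also contain the edge $(t_0t_2)$, since otherwise that edge would be a chord of a facial cycle, contradicting Proposition \ref{edge}; you instead take the two facial cycles supporting the $2$--paths $(t_1t_0t_2)$ and $(t_0t_1t_2)$ and apply the meet-properly condition of Definition \ref{poly} directly: two distinct facial cycles cannot share the three vertices $t_0,t_1,t_2$ (or, as you note, cannot both contain the two edges at $t_0$), so the two faces coincide and the resulting cycle carries all three edges of the triangle, hence equals it. The two proofs are of comparable length, but yours is self-contained in a way the paper's is not: Proposition \ref{edge} is stated only for facial cycles of length $n\geq 5$, whereas the face containing $(t_0t_1t_2)$ could a priori have length $3$ or $4$, so the paper's appeal to it requires either the easy extra case analysis or an implicit extension of that proposition; your observation that no lower bound on face length is needed is exactly the right point to flag. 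Both arguments ultimately rest on the same mechanism (two faces through a degree-$3$ vertex sharing too much), so the difference is one of packaging rather than substance.
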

\begin{proof}
Let $(t_0 t_1 t_2 t_0)$ be a cycle in $G$. Since every path of length two belongs to a $\Pi$--facial cycle, say $(t_0 t_1 t_2)$ is in a facial cycle $\mathcal{C}$. If $(t_0  t_2) \not \in \mathcal{C}$ then we would contradict Proposition \ref{edge}, and the statement follows.
\end{proof}
\begin{proposition}\label{4}
Let $G$ be a cubic graph. If $\mathcal{C}$ is a $4$--cycle of $G$ and $G\neq K_4$ then $\mathcal{C}$ is a facial cycle in every polyhedral embedding of $G$.
\end{proposition}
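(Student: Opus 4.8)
The plan is to pin $\mathcal{C}$ down as a face by following the two $\Pi$--facial cycles that run through a pair of ``opposite'' length--two subpaths of $\mathcal{C}$. Write $\mathcal{C}=(t_0 t_1 t_2 t_3 t_0)$ and fix an arbitrary polyhedral embedding $\Pi$ of $G$. As observed in the Preliminaries, every path of length two lies in a $\Pi$--facial cycle, so we may pick $\Pi$--facial cycles $\mathcal{F}$ containing the path $(t_1 t_0 t_3)$ and $\mathcal{F}'$ containing the path $(t_1 t_2 t_3)$. The goal is to show $\mathcal{F}=\mathcal{F}'=\mathcal{C}$.

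Assume first that $\mathcal{C}$ is chordless, so that $t_1$ and $t_3$ are distinct and non--adjacent in $G$. I would begin by ruling out $\mathcal{F}\neq\mathcal{F}'$: being two distinct $\Pi$--facial cycles of a polyhedral embedding, $\mathcal{F}$ and $\mathcal{F}'$ would have to meet properly, i.e.\ $\mathcal{F}\cap\mathcal{F}'$ would be empty, a single vertex, or a single edge; but $\mathcal{F}\cap\mathcal{F}'$ contains the two distinct non--adjacent vertices $t_1$ and $t_3$, which none of these three options permits. Hence $\mathcal{F}=\mathcal{F}'$. In this common cycle the two neighbours of $t_0$ along $\mathcal{F}$ must be $t_1$ and $t_3$ (because $(t_1 t_0 t_3)$ is a subpath of $\mathcal{F}$), while the two neighbours of $t_1$ and of $t_3$ along $\mathcal{F}$ must be $t_0$ and $t_2$ (because both $(t_1 t_0 t_3)$ and $(t_1 t_2 t_3)$ are subpaths of $\mathcal{F}$); consequently the two neighbours of $t_2$ along $\mathcal{F}$ are $t_1$ and $t_3$. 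Starting at $t_0$ and following $\mathcal{F}$ we then read off $\mathcal{F}=(t_0 t_1 t_2 t_3 t_0)=\mathcal{C}$, so $\mathcal{C}$ is a $\Pi$--facial cycle, as desired.

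It remains to handle the case in which $\mathcal{C}$ has a chord. A $4$--cycle has only its two diagonals as possible chords; say $(t_0 t_2)\in E(G)$, the other case being symmetric. Then $t_0$ and $t_2$ each have all three of their neighbours inside $\{t_0,t_1,t_2,t_3\}$, so in $G-\{t_1,t_3\}$ the edge $(t_0 t_2)$ becomes an isolated component. Since $G$ is a simple cubic graph with $G\neq K_4$, it has at least six vertices, hence a vertex outside $\{t_0,t_1,t_2,t_3\}$; therefore $\{t_1,t_3\}$ is a $2$--cut, $G$ is not $3$--connected, and by Proposition~\ref{layout} $G$ admits no polyhedral embedding whatsoever. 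Thus the claim holds vacuously in this case.

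I expect the chorded case to be the only genuinely delicate point: the key is the observation that a chorded $4$--cycle in a cubic graph other than $K_4$ necessarily produces a $2$--cut and hence, through Proposition~\ref{layout}, rules out polyhedral embeddings outright. If one prefers a self--contained treatment that does not appeal to Proposition~\ref{layout}, an alternative is available: a chord of $\mathcal{C}$ creates, by Proposition~\ref{3}, two facial triangles sharing an edge, and then examining the third face incident to each of their two common vertices, with the help of Proposition~\ref{edge}, reproduces the contradiction directly. Everything else in the proof is an immediate consequence of the ``meet properly'' clause of Definition~\ref{poly}.
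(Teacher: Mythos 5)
Your proof is correct, and while the overall strategy is the same as the paper's --- use polyhedrality to force the facial cycle through a length-two subpath of $\mathcal{C}$ to close up into $\mathcal{C}$ itself --- the execution differs at both steps in ways worth noting. For the chord, the paper simply asserts in one line that every $4$--cycle is induced because $G$ is $3$--connected by Proposition~\ref{layout}; you actually justify this by exhibiting the $2$--cut $\{t_1,t_3\}$ that a diagonal of $\mathcal{C}$ would create in a cubic graph on more than four vertices, which is the right argument and fills a detail the paper leaves implicit. For the chordless case, the paper takes a single facial cycle through $(t_0t_1t_2)$ and invokes Proposition~\ref{pathtwo} to force it to return through $t_3$, whereas you take the two facial cycles through the opposite subpaths $(t_1t_0t_3)$ and $(t_1t_2t_3)$, apply the proper-meeting clause of Definition~\ref{poly} directly to the non-adjacent pair $t_1,t_3$ to identify the two faces, and then read off $\mathcal{C}$ by counting neighbours along the common cycle. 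Your variant buys a little extra rigour: Proposition~\ref{pathtwo} is stated only for facial cycles of length at least $5$, so the paper's appeal to it is silent about the possibility that the face containing $(t_0t_1t_2)$ is a quadrilateral $(t_0t_1t_2xt_0)$ with $x\neq t_3$, a configuration your neighbour-counting argument excludes automatically. Both proofs are short and correct; yours is slightly more self-contained.
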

\begin{proof}
If G has a polyhedral embedding and $G \neq K_4$, then every $4$--cycle of $G$ is induced, since G is 3-connected by Proposition \ref{layout}. Let $(t_0 t_1 t_2 t_3 t_0)$ be  a cycle in $G$. Since every path of length two belongs to a $\Pi$--facial cycle, say $(t_0 t_1 t_2)$ is in a facial cycle $\mathcal{C}$. If the path $(t_2 t_3 t_0) \not \in \mathcal{C}$ then we would contradict Proposition \ref{pathtwo}, and the statement follows.
\end{proof} 

\begin{proposition}\label{one} 
Given two paths,  $(t_0 t_1 t_2 t_3), (t_0 t_1 t_2 t_{3'}),$ of $G$ then either $(t_0 t_1 t_2 t_3)$ or $(t_0 t_1 t_2 t_{3'})$, but not both, is a $\Pi$--facial subwalk.
\end{proposition}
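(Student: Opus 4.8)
The plan is to use that $G$ is cubic in order to identify, from the local structure at the middle vertex $t_2$, all $3$--paths of $G$ beginning with $(t_0 t_1 t_2)$, and then to locate the unique $\Pi$--facial cycle through $(t_0 t_1 t_2)$ and determine which of $t_3,t_{3'}$ it visits next. First, since $(t_0 t_1 t_2 t_3)$ and $(t_0 t_1 t_2 t_{3'})$ are two distinct paths we have $t_3 \neq t_{3'}$, and both $t_3$ and $t_{3'}$ are neighbours of $t_2$ other than $t_1$; as $\deg_G(t_2)=3$, the pair $\{t_3,t_{3'}\}$ is exactly the set of the two neighbours of $t_2$ distinct from $t_1$, so there is no other $3$--path of $G$ starting with $(t_0 t_1 t_2)$.

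Next I would show that $(t_0 t_1 t_2)$ lies in a \emph{unique} $\Pi$--facial cycle. Existence is the observation at the beginning of this section: every path of length two is contained in a $\Pi$--facial cycle. For uniqueness, suppose $(t_0 t_1 t_2)$ were a subwalk of two distinct $\Pi$--facial cycles $\mathcal{C}$ and $\mathcal{C}'$; then both would contain the edges $(t_0 t_1)$ and $(t_1 t_2)$, so their intersection would contain the whole path $(t_0 t_1 t_2)$, that is, two edges, contradicting the requirement in Definition \ref{poly} that any two $\Pi$--facial cycles meet properly (intersection empty, a single vertex, or a single edge). Denote by $\mathcal{C}$ this unique $\Pi$--facial cycle.

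Finally I would read off the conclusion by traversing $\mathcal{C}$ through $t_2$. Immediately after the segment $t_0\,t_1\,t_2$ of $\mathcal{C}$, the successor of $t_2$ cannot be $t_1$, since a cycle does not repeat the edge $(t_1 t_2)$; hence it is a neighbour of $t_2$ distinct from $t_1$, that is, $t_3$ or $t_{3'}$ --- say $t_3$. Then $(t_0 t_1 t_2 t_3)$ is a subwalk of $\mathcal{C}$ and therefore a $\Pi$--facial subwalk. If $(t_0 t_1 t_2 t_{3'})$ were also a $\Pi$--facial subwalk, it would be a subwalk of some $\Pi$--facial cycle containing $(t_0 t_1 t_2)$, and by uniqueness that cycle is $\mathcal{C}$; but then the successor of $t_2$ along $\mathcal{C}$ would be $t_{3'}\neq t_3$, a contradiction. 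Hence exactly one of $(t_0 t_1 t_2 t_3)$ and $(t_0 t_1 t_2 t_{3'})$ is a $\Pi$--facial subwalk. I do not expect a genuine obstacle here; the only delicate point is the uniqueness of the facial cycle through $(t_0 t_1 t_2)$, and that is exactly where polyhedrality (proper meeting of faces) is indispensable --- dropping it, both $3$--paths could be facial.
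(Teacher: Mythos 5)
Your proof is correct and follows essentially the same route as the paper's: existence comes from the fact that every $2$--path lies in a facial cycle, and "not both" splits into the two cases the paper treats — two distinct facial cycles sharing the edges $(t_0t_1)$ and $(t_1t_2)$ would meet improperly, and a single facial cycle containing both $3$--paths would have to revisit $t_2$, contradicting that facial walks are cycles. Your version merely packages these as "uniqueness of the facial cycle through $(t_0t_1t_2)$" plus a traversal argument, which is a slightly more explicit write-up of the same idea.
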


\begin{proof} Given that every path of length two is in a $\Pi$--facial cycle then either $(t_0 t_1 t_2 t_3)$ or $(t_0 t_1 t_2 t_{3'})$ is a $\Pi$--facial subwalk. Suppose that both  $(t_0 t_1 t_2 t_3)$ and $(t_0 t_1 t_2 t_{3'})$ are $\Pi$--facial subwalks,  if each of these paths belongs to a different facial cycle, then they would intersect improperly, contradicting Definition \ref{poly}. If they belong to the same facial cycle, then such facial cycle self intersects, contradicting  Definition \ref{poly}.
\end{proof}
\begin{proposition}\label{pizzageneralization}
Let $\mathcal{P}= (q_0=t_0 , t_1, ..., t_{n} =q_{m})$ and $\mathcal{Q}= (q_0=t_0, q_1, ..., q_{m} =t_{n})$ be two internally disjoint $\Pi$--facial subwalks, such that $(t_0 t_{n})\not \in E(G)$, then $\mathcal{P}\cup \mathcal{Q}$ must be a $\Pi -$facial cycle. 
\end{proposition}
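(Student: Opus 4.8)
The plan is to identify the facial cycles that contain $\mathcal{P}$ and $\mathcal{Q}$, show they must coincide, and then show this common facial cycle is exactly $\mathcal{P}\cup\mathcal{Q}$.

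First I would record the elementary structural facts. Since $\Pi$ is polyhedral, every $\Pi$--facial walk is a cycle, so $\mathcal{P}$ is a contiguous arc of some $\Pi$--facial cycle $F_{\mathcal P}$ and $\mathcal{Q}$ of some $\Pi$--facial cycle $F_{\mathcal Q}$; in particular each of $\mathcal P,\mathcal Q$ is a simple path. Because $(t_0 t_n)\notin E(G)$, neither arc can be a single edge, so both have length at least $2$, and the endpoints $t_0,t_n$ are genuinely distinct (the case $t_0=t_n$, where a subwalk with coinciding endpoints would be the whole facial cycle, is degenerate and excluded by the setup). Internal disjointness of $\mathcal{P}$ and $\mathcal{Q}$ together with their sharing the endpoints $t_0,t_n$ then gives $V(\mathcal P)\cap V(\mathcal Q)=\{t_0,t_n\}$, so $C:=\mathcal P\cup\mathcal Q$ is a genuine cycle of $G$, of length at least $4$, and $\mathcal{P}\neq\mathcal{Q}$.

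The key step is to prove $F_{\mathcal P}=F_{\mathcal Q}$. Suppose not; then by Definition \ref{poly} the two distinct $\Pi$--facial cycles $F_{\mathcal P}$ and $F_{\mathcal Q}$ meet properly, so $F_{\mathcal P}\cap F_{\mathcal Q}$ is empty, a single vertex, or a single edge. But $t_0$ and $t_n$ are two distinct vertices lying on both $\mathcal P\subseteq F_{\mathcal P}$ and $\mathcal Q\subseteq F_{\mathcal Q}$, so $\{t_0,t_n\}\subseteq F_{\mathcal P}\cap F_{\mathcal Q}$; the only surviving possibility is that the intersection is exactly the edge $t_0 t_n$, which forces $(t_0 t_n)\in E(G)$ and contradicts the hypothesis. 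Hence $F_{\mathcal P}=F_{\mathcal Q}=:F$.

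To finish, note that $\mathcal P$ and $\mathcal Q$ are now two arcs of the single cycle $F$ joining the non-adjacent (since $(t_0t_n)\notin E(G)$) vertices $t_0$ and $t_n$. Deleting $t_0$ and $t_n$ from $F$ leaves precisely two arcs, so $\{\mathcal P,\mathcal Q\}$ must be exactly that pair of arcs (they are distinct with disjoint non-empty interiors), whence $\mathcal P\cup\mathcal Q=F$; equivalently, $C=\mathcal P\cup\mathcal Q$ is a cycle contained in the cycle $F$, so $C=F$. Therefore $\mathcal P\cup\mathcal Q$ is a $\Pi$--facial cycle. I expect the only real care to be needed in the first paragraph — ruling out the degenerate configurations ($t_0=t_n$, length-one arcs, $\mathcal P=\mathcal Q$) so that $\mathcal{P}\cup\mathcal{Q}$ is literally a cycle; once that is in place, the statement follows immediately from proper meeting of facial cycles.
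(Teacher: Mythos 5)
Your proof is correct, and it reaches the conclusion by a slightly different and in fact leaner route than the paper. To show that the two facial cycles containing $\mathcal{P}$ and $\mathcal{Q}$ coincide, the paper invokes the cubic hypothesis: a facial cycle through a degree-three vertex contains two of its three incident edges, so by pigeonhole the two cycles share an edge at $t_0$ and an edge at $t_n$, and these shared edges are distinct precisely because $(t_0 t_n)\not\in E(G)$; two shared edges then violate proper meeting. You instead observe directly that the two distinct vertices $t_0$ and $t_n$ both lie in the intersection, and that Definition \ref{poly} then leaves only the possibility that the intersection is a single edge with endpoints $t_0$ and $t_n$, which the hypothesis $(t_0 t_n)\not\in E(G)$ forbids. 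Your version never uses $3$-regularity, so it actually establishes the statement for arbitrary polyhedral embeddings, not just cubic ones. You also make explicit the closing step that the paper leaves implicit: once both arcs lie on a single facial cycle $F$, they must be the two arcs of $F$ determined by $t_0$ and $t_n$ (each with nonempty interior, again because $(t_0 t_n)\not\in E(G)$), so their union is all of $F$. The price is the small amount of bookkeeping in your first paragraph ruling out $t_0=t_n$, length-one arcs, and $\mathcal{P}=\mathcal{Q}$, which you handle adequately.
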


\begin{proof}
We will proceed by contradiction. Let  $\mathcal{C}_{\mathcal{P}} $ be the $\Pi$--facial cycle associated to $\mathcal{P}$ and  $\mathcal{C}_{\mathcal{Q}}$ be the $\Pi$--facial cycle associated to $\mathcal{Q}$, where $\mathcal{C}_{\mathcal{P}} \neq \mathcal{C}_{\mathcal{Q}}$. Since $G$ is a cubic graph, let $u$ and $v$ be the remaining vertices adjacent to $t_0 = q_0$ and $t_{n}=q_{m}$, respectively. Since the edge $(t_0 t_{n})\not\in E(G)$, observe that $v\neq t_0$ and $u \neq t_{n}$. Which implies that $(t_0 u)$ and $(t_{n} v)$ are different. 

Notice both  $\mathcal{C}_{\mathcal{P}}$ and $ \mathcal{C}_{\mathcal{Q}}$ have to contain two edges incident to $t_0=q_0$, but $t_0=q_0$ has degree three; thus $\mathcal{C}_{\mathcal{P}}$ intersects $ \mathcal{C}_{\mathcal{Q}}$ in at least one edge incident to $t_0=q_0$. The same holds for $t_{n}=q_{m}$, hence $\mathcal{C}_{\mathcal{P}}$ and $ \mathcal{C}_{\mathcal{Q}}$ would have to intersect in at least two edges, contradicting Definition \ref{poly}.
\end{proof}

\section{Proof of the main theorem} 

In this section we will denote as $G^e$ any extended graph in $\{G^e(\Pi)| \Pi \in \mathfrak{P}(G)\}$, where we emphasize that we do not claim knowledge of what embedding $G^e$ corresponds to.

The proof of the main result of the paper will be split in to two subsections: the simple case and the difficult case. 

\subsection{The simple case}
We present the simple case first as within its proof it becomes obvious why the second part requires much more detail.

\begin{theorem}\label{thm:easy}
Let $G^{e}$ be an extended graph of a finite cubic graph, $G,$ such that for all edges $[t, t'] \in \mathcal{S}$ there is a unique path of length three in $G$ whose ends are $t$ and $t'$, then $G^{e}$ uniquely determines the $\Pi$--facial cycle system of an embedding. 
\end{theorem}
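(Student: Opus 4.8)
The plan is to show that, under the uniqueness hypothesis, the scaffold edges of $G^e$ can be unambiguously translated back into $3$--paths of $G$, and that the collection of these $3$--paths, together with the edges of $G$, assembles into exactly one facial cycle system. Since different embeddings (up to automorphism) have different facial cycle systems, recovering the facial cycle system from $G^e$ is the same as recovering $\Pi$, which is what the theorem asks for. First I would fix the obvious map $\Pi \mapsto G^e(\Pi)$ and note it is well defined; the content is that it is injective, and for this it suffices to describe a procedure that reads off the facial cycle system from the abstract graph $G^e$ (knowing only which edges are original edges of $G$ and which are scaffold edges).

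The key steps, in order. (1) For each scaffold edge $[t,t']\in\mathcal S$, recover its corresponding $3$--path: by hypothesis there is a \emph{unique} path $(t\,t_1\,t_2\,t')$ of length three in $G$ with these endpoints, so this path is determined by $G$ alone, hence by $G^e$. (2) Observe that every path of length two in $G$ sits in a unique $\Pi$--facial cycle (stated in the Preliminaries), and that a $3$--path $(t_0t_1t_2t_3)$ is a $\Pi$--facial subwalk if and only if $[t_0t_3]\in\mathcal S$ with corresponding path $(t_0t_1t_2t_3)$; by Proposition~\ref{one}, of the (at most two, by cubicity) length--three extensions of a given $2$--path $(t_0t_1t_2)$, exactly one is a facial subwalk — and $G^e$ tells us which, since it records precisely the scaffold edges. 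So from $G^e$ we know, for every $2$--path, how it extends within its facial cycle on the $t_2$ side (and symmetrically on the $t_0$ side). (3) Now grow each facial cycle: start from any $2$--path $(t_0t_1t_2)$, use step (2) to extend to $(t_0t_1t_2t_3)$, then look at the $2$--path $(t_1t_2t_3)$ and extend again, and so on. Because $G$ is finite and every extension is forced, this walk closes up into a cycle; I would check it is a genuine cycle (not just a closed walk) — this follows from polyhedrality, e.g. via Propositions~\ref{edge} and \ref{pathtwo}, which forbid the short chords and short connections that a self-intersection would create. (4) Conclude that the set of cycles produced this way is forced by $G^e$, is independent of the starting $2$--path within a cycle, and every edge lies in exactly two of them (or twice in one, but polyhedrality rules the latter out for facial \emph{cycles}); hence it equals the $\Pi$--facial cycle system, which is therefore a function of $G^e$ alone.

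The main obstacle is step (3)–(4): making precise that the forced extension procedure is \emph{consistent} — that following the forced extensions never gives a choice and never produces a walk that fails to be a simple cycle or that is incompatible with having been obtained from a face. Concretely, I expect the delicate point to be verifying that the walk built by iterated forced extension returns to its start in the correct direction and that distinct faces reconstructed from distinct seeds do not clash; the Preliminaries (Propositions~\ref{edge}, \ref{pathtwo}, \ref{one}, \ref{pizzageneralization}) are exactly the tools for this, and the role of the theorem's hypothesis is precisely to remove the one genuine ambiguity — a scaffold edge corresponding to two different $3$--paths — which is what forces the harder case to be treated separately afterwards.
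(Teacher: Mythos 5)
Your proposal is correct and follows essentially the same route as the paper: recover the unique $3$--path behind each scaffold edge, use Proposition \ref{one} together with the presence/absence of scaffold edges to force each extension of a facial subwalk, close up each facial cycle, and then pass to the second face through an edge $(t_1t_2)$ via the scaffold edge $[t_{1'}t_{2'}]$ until all edges are covered twice. Your explicit flagging of the need to check that the forced walk closes into a simple cycle (via Propositions \ref{edge} and \ref{pathtwo}) is a point the paper's own proof leaves implicit, but the argument is the same.
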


\begin{proof} From this information we will construct the $\Pi$--facial cycle system.
Let $[t_0 t_3] \in \mathcal{S},$ by hypothesis we may say that $(t_0  t_1  t_2  t_3)$ is the unique path of length three between $t_0$ and $t_3,$ and there is a $\Pi$--facial cycle that contains the facial subwalk  $(t_0  t_1  t_2  t_3)$. Let $t_4$ and $t_{4'}$ be the remaining vertices adjacent to $t_3,$ then by Proposition \ref{one} either $( t_1  t_2  t_3 t_4)$ or $( t_1  t_2  t_3 t_{4'})$ is a facial subwalk, but not both. Additionally, by hypothesis, only one of  $[t_1 t_4]$ or $[t_1 t_{4'}]$ is in $\mathcal{S}$. Hence we know with certainty if the facial cycle that contains $(t_0  t_1  t_2  t_3)$ continues on to $t_4$ or $t_{4'}$.  We can continue with this procedure until we obtain the unique $\Pi-$facial cycle that contains $(t_0  t_1  t_2  t_3).$

Now consider the edge $(t_1 t_2) \in E(G)$, this edge must belong to another $\Pi$--facial cycle. Let $t_{1'}$ and $t_{2'}$ be the remaining vertices adjacent to $t_1$ and $t_2$, respectively. Then, as the embedding is polyhedral, the other facial cycle containing $(t_1 t_2)$, necessarily contains $(t_{1'} t_1 t_2 t_{2'})$, thus $[t_{1'} t_{2'}] \in \mathcal{S}.$ Now we may use the same procedure as before to find the rest of the edges in this facial cycle. 

We can find every $\Pi$--facial cycle in this manner, by selecting at every step an edge of the union of the preceding facial cycles that hasn't appeared in two facial cycles yet.
This procedure is finite, since the graph $G$ is finite.
\end{proof}

As a consequence of Theorem \ref{thm:easy} we have the following:

\begin{corollary} Let $G^{e}$ be an extended graph of a finite cubic graph, $G,$ with no $6$--cycles, then $G^{e}$ uniquely determines the $\Pi$--facial cycle system of an embedding of $G$.
\end{corollary}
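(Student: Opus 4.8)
The plan is to derive this from Theorem \ref{thm:easy}: it suffices to prove that if $G$ has no $6$--cycle and $\Pi$ is a polyhedral embedding of $G$, then for every scaffold edge $[t_0 t_3] \in \mathcal{S}$ there is a \emph{unique} path of length three in $G$ with ends $t_0$ and $t_3$. The case $G = K_4$ can be dispatched at once, since $K_4$ has a unique polyhedral embedding (an Euler-characteristic count forces the tetrahedral embedding on the sphere, in which $\mathcal{S}=\emptyset$), so from now on assume $G \neq K_4$.

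Suppose, for contradiction, that for some $[t_0 t_3] \in \mathcal{S}$ there are two distinct $3$--paths in $G$ with ends $t_0,t_3$. Let $P = (t_0 t_1 t_2 t_3)$ be one that is a $\Pi$--facial subwalk (this exists by the definition of $\mathcal{S}$), let $\mathcal{C}_P$ be the $\Pi$--facial cycle containing $P$, and let $P' = (t_0 s_1 s_2 t_3)$ be a second such path. I would then split according to how the internal vertices of $P$ and $P'$ overlap. First, if $P$ and $P'$ are internally disjoint, then $(t_0 t_1 t_2 t_3 s_2 s_1 t_0)$ is a cycle on six distinct vertices, contradicting the hypothesis. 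Next, if $P$ and $P'$ share the edge incident to $t_0$ (that is, $s_1 = t_1$) or the edge incident to $t_3$ ($s_2 = t_2$) --- say $s_1 = t_1$ --- then $(t_1 t_2 t_3 s_2 t_1)$ is a $4$--cycle of $G$, hence a $\Pi$--facial cycle by Proposition \ref{4}; but this $4$--cycle is distinct from $\mathcal{C}_P$ (it does not contain the edge $(t_0 t_1)$, whereas $\mathcal{C}_P$ does) and the two facial cycles both contain the edges $(t_1 t_2)$ and $(t_2 t_3)$, contradicting Definition \ref{poly}. Finally, if an internal vertex of $P'$ coincides ``crosswise'' with one of $P$ (that is, $s_1 = t_2$ or $s_2 = t_1$), a short degree count at that vertex forces $s_1=t_2$ and $s_2=t_1$, so $P'=(t_0 t_2 t_1 t_3)$ and the subgraph induced on $\{t_0,t_1,t_2,t_3\}$ is $K_4$ minus the edge $(t_0 t_3)$; then $(t_0 t_1 t_2)$ and $(t_1 t_2 t_3)$ are triangles of $G$, hence both $\Pi$--facial cycles by Proposition \ref{3}, so they are the two facial cycles through the edge $(t_1 t_2)$; but $\mathcal{C}_P$ also passes through $(t_1 t_2)$ and additionally contains $(t_0 t_1)$ and $(t_2 t_3)$, which neither triangle does --- again a contradiction. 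Having ruled out every configuration, the $3$--path is unique, and Theorem \ref{thm:easy} gives the conclusion.

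The routine parts are the verifications that the vertices involved are genuinely distinct --- so that one really obtains a $6$--cycle, a $4$--cycle, or the $K_4$-minus-an-edge configuration --- and these follow immediately from the facts that $P$ and $P'$ are honest paths and that $P' \neq P$; I would compress them into one-line checks. The step I expect to require the most care is realising that the ``overlapping'' cases need \emph{not} yield a $6$--cycle and must instead be eliminated directly through polyhedrality, using Propositions \ref{3} and \ref{4}; this is precisely the phenomenon flagged after the definition of the extended graph, namely that double scaffold edges come only from $6$--cycle faces or from two $4$--cycles sharing an edge.
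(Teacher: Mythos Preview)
Your proposal is correct and follows the same route as the paper, which simply records the corollary as an immediate consequence of Theorem~\ref{thm:easy} without writing out a proof. Your verification that the hypothesis of Theorem~\ref{thm:easy} holds --- including the careful disposal of the non--internally--disjoint cases via Propositions~\ref{3} and~\ref{4} (and the separate treatment of $K_4$ so that Proposition~\ref{4} applies) --- is sound and in fact more thorough than what the paper makes explicit.
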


\subsection{The difficult case} Clearly, the difficulty arises when we have the possibility that the hypothesis of Theorem \ref{thm:easy} does not hold for an extended graph, $G^{e}$. Namely, NOT for all scaffold edges  $[t, t'] \in \mathcal{S}$  there is a unique path of length three in $G$ whose ends are $t$ and $t'$. This motivates the following definitions:

\begin{definition}\label{ye}(Fork)
Let  $Y  \subset G^e$ be a subgraph with set of vertices $V(Y)=\{t_1, t_2, t_3, t_4, t_{4'}\}$ and set of edges $E(Y)=\{(t_1 t_2), (t_2 t_3), (t_3 t_4), (t_3 t_{4'})\}\cup \{[t_1 t_4], [t_1 t_{4'}]\}$. We will call such graph a \emph{fork.} 
\begin{figure}[h]
\begin{center}
\includegraphics[scale=1.5]{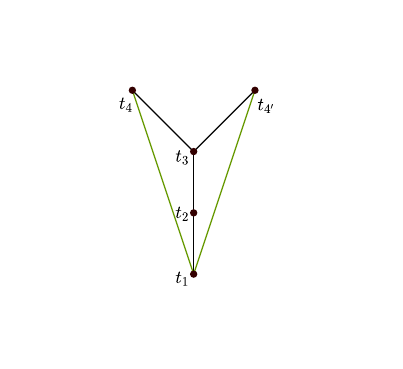}
\caption{Fork}
\end{center}
\end{figure}
\end{definition}

Here, if we tried to reproduce the reconstruction procedure presented in the proof of Theorem \ref{thm:easy}, when arriving at a fork we would have a \emph{disjunction} consisting of whether $(t_1 t_2 t_3 t_4)$ is the $3$--path corresponding to $[t_1 t_4]$ or $(t_1 t_2 t_3 t_{4'})$ is the $3$--path corresponding to $[t_1 t_{4'}]$. A great part of this section consists of proving that, in most cases, this \emph{disjuntion can be solved} by looking at the rest of the structure of the graph. We will say that the \emph{disjuntion cannot be solved} whenever the rest of the structure of the extended graph does not force either $(t_1 t_2 t_3 t_4)$ to be the $3$--path corresponding to $[t_1t_4]$ or $(t_1 t_2 t_3 t_{4'})$ to be the $3$--path corresponding to $[t_1 t_{4'}]$.

We will now prove some more technical lemmas which will help us discover the very particular structure of graphs for which disjunctions cannot be solved.

\begin{proposition}\label{3path}
Let $\mathcal{P}_{1}= (u t_{0} t_{1} v)$, $\mathcal{P}_2 =(u t_2 t_3 v)$ and $\mathcal{P}_3 =(u t_4 t_5 v)$ be three internally disjoint $3$--paths in $G$ such that $[uv]\in G^e$ then $[[uv]]$ is a double scaffold edge. Furthermore, $\mathcal{P}_i \cup \mathcal{P}_j$  is a $\Pi$-facial cycle for a pair $i, j\in\{1, 2, 3\}$.
\end{proposition}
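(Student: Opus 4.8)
The plan is to use Proposition~\ref{one} repeatedly to pin down which of the three $3$--paths are facial subwalks, and then invoke Proposition~\ref{pizzageneralization} to conclude. First I would observe that $[uv]\in G^e$ means at least one of $\mathcal{P}_1,\mathcal{P}_2,\mathcal{P}_3$ is a $\Pi$--facial subwalk; without loss of generality say $\mathcal{P}_1=(ut_0t_1v)$ is. Now look at the vertex $u$, which has degree three in $G$. Its three incident edges are $(ut_0)$, $(ut_2)$, $(ut_4)$ (these are distinct because the three $3$--paths are internally disjoint and $[uv]\notin E(G)$ since $[uv]$ arises as a scaffold edge — or, if $[uv]\in E(G)$ as well, a small separate check applies; I would state at the outset that $u\neq v$ and that the six internal vertices together with $u,v$ behave as a $\Theta$-like subgraph).

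The key step is a counting/parity argument at $u$ and at $v$. Each of the three faces incident to $u$ contains exactly two of the edges $(ut_0),(ut_2),(ut_4)$, so each unordered pair of these edges is "used" by exactly one facial cycle at $u$; the analogous statement holds at $v$ for $(vt_1),(vt_3),(vt_5)$. Since $(ut_0t_1v)$ is a facial subwalk, the face through it uses the pair $\{(ut_0),(u?)\}$ at $u$ where the second edge is whichever of $(ut_2),(ut_4)$ the facial walk continues along after $v$ — but more directly: consider the path $(t_0\,u\,t_2)$ of length two; by the remark preceding Proposition~\ref{edge} it lies in some facial cycle, and by Proposition~\ref{one} applied to $(t_1\,t_0\,u\,t_2)$ versus $(t_1\,t_0\,u\,t_4)$ exactly one of $(t_1t_0ut_2)$, $(t_1t_0ut_4)$ is facial. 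Tracing this out at both $u$ and $v$ forces a second one of the $\mathcal{P}_i$, say $\mathcal{P}_2$, to be a facial subwalk: the face on the "other side" of the edge $(ut_0)$ from $\mathcal{P}_1$ must go $t_0\,u\,t_j$ for some $j\in\{2,4\}$, and by internal disjointness and the fact that $v$ has only the edges $(vt_1),(vt_3),(vt_5)$, the only way this face can be a cycle meeting $\mathcal{P}_1$ properly is to run all the way to $v$ along one of the remaining two $3$--paths. Once two of the $3$--paths, say $\mathcal{P}_1$ and $\mathcal{P}_2$, are both facial subwalks and internally disjoint with $(uv)\notin E(G)$, Proposition~\ref{pizzageneralization} gives that $\mathcal{P}_1\cup\mathcal{P}_2$ is a single $\Pi$--facial cycle. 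This is a $6$--cycle and $[uv]$ is a chord of it realised by the third path $\mathcal{P}_3$ as well (since $\mathcal{P}_3$ must also be facial: the edge $(ut_4)$ lies in two faces, one being forced as above), so $[uv]$ corresponds to two internally disjoint $3$--paths and is therefore a double scaffold edge $[[uv]]$.

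I would organise the write-up as: (1) fix notation, note $u\neq v$ and the edge-sets at $u$ and $v$; (2) WLOG $\mathcal{P}_1$ facial; (3) apply Proposition~\ref{one} at the two degree-three vertices to force a second path facial; (4) invoke Proposition~\ref{pizzageneralization} to get $\mathcal{P}_1\cup\mathcal{P}_2$ a facial $6$--cycle; (5) argue the third path is also facial (the third face at $u$ must carry $(ut_4)$ and close up at $v$ through $\mathcal{P}_3$), so all three $3$--paths are facial subwalks of faces all meeting the $6$--cycle, whence $[uv]=[[uv]]$ and the "furthermore" clause holds for the pair whose union is that $6$--cycle.

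\textbf{Main obstacle.} The delicate point is step (3)–(5): making rigorous the claim that the face on the far side of $(ut_0)$ is forced to travel the \emph{entire} length of one of the other $3$--paths rather than branching off somewhere in the middle at an internal vertex. Here I would lean on the fact that the internal vertices $t_0,\dots,t_5$ have their three edges already partly determined (two of the three incident edges of each internal vertex lie on its $3$--path), so any facial walk entering an internal vertex is constrained, and combine this with Proposition~\ref{one} and the properness condition of Definition~\ref{poly} to rule out the walk leaving the union of the three paths. I also need to handle cleanly the degenerate possibility that $[uv]\in E(G)$ (then Proposition~\ref{pizzageneralization} does not apply directly), but in that case $G$ would contain a $\Theta$-subgraph with a very short handle and one checks directly, using Propositions~\ref{edge} and~\ref{pathtwo}, that this is incompatible with polyhedrality unless $G$ is one of the small exceptional graphs, which can be excluded or treated separately.
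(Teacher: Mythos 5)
Your overall shape is right (fix one facial path, force a second one to be facial, conclude that their union is a facial $6$--cycle), and invoking Proposition \ref{pizzageneralization} at the end is a legitimate alternative to the paper's direct closing-up argument. But the step you yourself flag as the ``main obstacle'' is exactly where the proof is missing, and the route you sketch for it does not work. The face on the far side of the edge $(ut_0)$ from $\mathcal{P}_1$ is \emph{not} forced to travel to $v$: at $t_0$ that face must use the third edge of $t_0$ (the pair $\{(ut_0),(t_0t_1)\}$ is already consumed by the face containing $\mathcal{P}_1$, and two faces cannot share two edges), so it immediately exits the configuration and need never meet $v$. Your pairing bookkeeping at $u$ and $v$ only records which pairs of edges are consumed at those two vertices; it does nothing to stop a facial walk from branching off at an internal vertex $t_i$, which is precisely what has to be ruled out.

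The missing idea --- and the paper's actual argument --- is to extend the facial walk containing $\mathcal{P}_1$ \emph{itself} past an endpoint: since $(ut_0t_1v)$ is a facial subwalk, Proposition \ref{one} gives that exactly one of $(ut_0t_1vt_3)$, $(ut_0t_1vt_5)$ is a facial subwalk, say the former; then the $2$--path $(u\,t_2\,t_3)$ together with Proposition \ref{pathtwo} forces this facial cycle to close as $(ut_0t_1vt_3t_2u)=\mathcal{P}_1\cup\mathcal{P}_2$, since any other continuation leaves $u$ and $t_3$ at cyclic distance at least two on a long facial cycle admitting the chordal $2$--path $(u t_2 t_3)$, an improper intersection. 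This one step delivers both the double scaffold edge and the ``furthermore'' clause simultaneously. Two smaller corrections: your parenthetical claim that $\mathcal{P}_3$ ``must also be facial'' is false --- a face containing $\mathcal{P}_3$ would need a second edge at $u$ and a second edge at $v$, all of which lie on the facial cycle $\mathcal{P}_1\cup\mathcal{P}_2$, so the two faces would share two edges, contradicting Definition \ref{poly}; fortunately the conclusion only needs $\mathcal{P}_1$ and $\mathcal{P}_2$ to be facial. And your worry about the degenerate case $(uv)\in E(G)$ is vacuous: $u$ is cubic with neighbours $t_0,t_2,t_4$, none of which is $v$, so $(uv)\notin E(G)$ automatically and Proposition \ref{pizzageneralization} applies without any separate check.
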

\begin{proof}
With out loss of generality, suppose that $[uv]$ corresponds to the path $\mathcal{P}_1$, which means that $(u t_0 t_1 v)$ is $\Pi$-facial subwalk. By Proposition \ref{one}, it must satisfied that either $(u t_0 t_1 v t_3)$ or $(u t_0 t_1 v t_5)$ is a $\Pi$-facial subwalk. Suppose, with out loss of generality, that $(u t_0 t_1 v t_3)$ is a $\Pi$-facial subwalk, then by Proposition \ref{pathtwo} $(u t_0 t_1 v  t_3 t_2 u)$ is $\Pi$-facial cycle, hence $[[uv]]$ is a double scaffold edge. See Figure 2. 
\end{proof}
\begin{figure}[h]
\vspace{-1.64cm}
    \centering
    \includegraphics[scale=1.5]{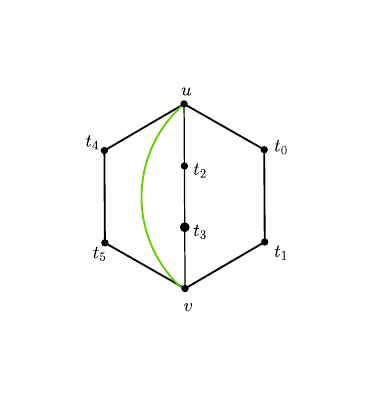}
    \vspace{-1.5cm}
    \label{fig:13-0lemma10(aristadoble).pdf}
    \caption{}
\end{figure}
\begin{corollary}\label{pelito}
Let $\mathcal{P}_{1}= (u t_{0} t_{1} v)$, $\mathcal{P}_2 =(u t_2 t_3 v)$ and $\mathcal{P}_3 =(u t_4 t_5 v)$ be three internally disjoint $3$--paths in $G$ such that $[uv]\in G^e$ and $(t_1 t_0 u t_4)$ is a $\Pi$-facial subwalk. Then $(v t_5 t_4 u)$ is a $\Pi$-facial subwalk. 
\end{corollary}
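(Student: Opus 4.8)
The plan is to invoke Proposition~\ref{3path}: its hypotheses are exactly those assumed here (three internally disjoint $3$--paths $\mathcal{P}_1,\mathcal{P}_2,\mathcal{P}_3$ from $u$ to $v$ with $[uv]\in G^e$), so it yields that one of the three $6$--cycles $\mathcal{P}_1\cup\mathcal{P}_2$, $\mathcal{P}_1\cup\mathcal{P}_3$, $\mathcal{P}_2\cup\mathcal{P}_3$ is a $\Pi$--facial cycle. The key observation is that each of $\mathcal{P}_1\cup\mathcal{P}_3$ and $\mathcal{P}_2\cup\mathcal{P}_3$ contains $\mathcal{P}_3=(u\,t_4\,t_5\,v)$ as a contiguous subpath, hence contains $(v\,t_5\,t_4\,u)$ as a subwalk; so if either of these is the facial cycle, we are done at once. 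Thus it only remains to exclude the third possibility, namely that $\mathcal{P}_1\cup\mathcal{P}_2$ is the $\Pi$--facial cycle delivered by Proposition~\ref{3path}.

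To exclude it I would use the remaining hypothesis, that $(t_1\,t_0\,u\,t_4)$ is a $\Pi$--facial subwalk; let $F$ be the $\Pi$--facial cycle that contains this subwalk. Since the embedding is polyhedral, every facial cycle through a cubic vertex uses exactly two of the three edges incident to that vertex, and for each pair of incident edges there is a \emph{unique} facial cycle using that pair there (the three angles at a cubic vertex lie in three distinct facial cycles). Reading off $F$, it uses the pair $\{(t_0t_1),(t_0u)\}$ at $t_0$ and the pair $\{(ut_0),(ut_4)\}$ at $u$. If $\mathcal{P}_1\cup\mathcal{P}_2$ were a facial cycle, then at $t_0$ it uses precisely the pair $\{(ut_0),(t_0t_1)\}$, which forces $\mathcal{P}_1\cup\mathcal{P}_2=F$; but at $u$ the cycle $\mathcal{P}_1\cup\mathcal{P}_2$ uses the pair $\{(ut_0),(ut_2)\}$, and this differs from $F$'s pair $\{(ut_0),(ut_4)\}$ because $t_2\ne t_4$ (the three paths are internally disjoint). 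This contradiction rules out $\mathcal{P}_1\cup\mathcal{P}_2$ and completes the proof.

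The argument is short, and I do not foresee a real obstacle. The only point that needs care is the bookkeeping of which pair of incident edges each of the four cycles $F$, $\mathcal{P}_1\cup\mathcal{P}_2$, $\mathcal{P}_1\cup\mathcal{P}_3$, $\mathcal{P}_2\cup\mathcal{P}_3$ uses at the vertices $u$ and $t_0$, together with the elementary fact that in a polyhedral embedding the facial cycle through a given angle at a vertex is unique. (Alternatively, one could avoid Proposition~\ref{3path} altogether and trace $F$ directly from the hypothesis, as in the proof of that proposition, but the route above is shorter.)
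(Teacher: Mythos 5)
Your proposal is correct and takes essentially the same route as the paper: invoke Proposition~\ref{3path} to conclude that one of the three pairwise unions is a $\Pi$--facial cycle, observe that $\mathcal{P}_1\cup\mathcal{P}_3$ and $\mathcal{P}_2\cup\mathcal{P}_3$ both contain $(v\,t_5\,t_4\,u)$, and rule out $\mathcal{P}_1\cup\mathcal{P}_2$ from the hypothesis that $(t_1\,t_0\,u\,t_4)$ is a facial subwalk. The paper carries out that last exclusion by citing Proposition~\ref{one} (so $(t_1\,t_0\,u\,t_2)$ cannot be a facial subwalk, yet it would be one if $\mathcal{P}_1\cup\mathcal{P}_2$ were facial), which is exactly the angle-uniqueness fact you spell out at $t_0$ and $u$.
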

\begin{proof}
Since $[uv]\in G^e$, Proposition \ref{3path} implies $[[uv]]$ is a double scaffold edge and either $(u t_0 t_1 v t_5 t_4 u)$, $(u t_0 t_1 v t_3 t_2 u)$ or $(u t_2 t_3 v t_5 t_4 u)$ is a $\Pi$-facial cycle. Since $(t_1 t_0 u t_4)$ is a $\Pi$-facial subwalk, by Proposition \ref{one} $(t_1 t_0 u t_2)$ is not a $\Pi$-facial subwalk. This implies that $(u t_0 t_1 v t_3 t_2 u)$ can not be a $\Pi$-facial cycle and, necessarily, either $(u t_0 t_1 v t_5 t_4 u)$ or $(u t_2 t_3 v t_5 t_4 u)$ is a $\Pi$-facial cycle. Notice that the path $(v t_5 y_4 u)$ appears in both cases, hence, this path is a $\Pi$-facial subwalk. 
\end{proof}

\begin{corollary}\label{otro}
Let $(u t_{0} t_{1} v)$ and $(u t_2 t_3 v)$ be two internally disjoint $3$--paths in $G$ such that $[uv]\in G^e$ and $(u t_{0} t_{1} v)$ is not a $\Pi$-facial subwalk. Then $(u t_2 t_3 v)$ is a $\Pi$-facial subwalk. 
\end{corollary}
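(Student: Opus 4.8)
The plan is to argue by contradiction: suppose that $(u\,t_2\,t_3\,v)$ is \emph{also} not a $\Pi$--facial subwalk. A quick reduction disposes of the case $uv\in E(G)$: then $(u\,t_0\,t_1\,v\,u)$ is a $4$--cycle of $G$, hence (as $G\ne K_4$; the case $G=K_4$ has no scaffold edges and may be set aside) a $\Pi$--facial cycle by Proposition \ref{4}, so $(u\,t_0\,t_1\,v)$ would be a $\Pi$--facial subwalk, contrary to hypothesis. Thus $uv\notin E(G)$ and the edge $[uv]$ of $G^e$ is a scaffold edge, so there is a $3$--path $\mathcal{Q}=(u\,x\,y\,v)$ of $G$, with $x,y\notin\{u,v\}$, that is a $\Pi$--facial subwalk: the $3$--path corresponding to $[uv]$. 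Since neither $(u\,t_0\,t_1\,v)$ nor $(u\,t_2\,t_3\,v)$ is a $\Pi$--facial subwalk, $\mathcal{Q}$ is a \emph{third} $3$--path between $u$ and $v$.

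Next I would split on whether $\mathcal{Q}$ is internally disjoint from both given $3$--paths. If it is, then $(u\,t_0\,t_1\,v)$, $(u\,t_2\,t_3\,v)$ and $\mathcal{Q}$ are three pairwise internally disjoint $3$--paths with $[uv]\in G^e$, so Proposition \ref{3path} gives that the union of some two of them is a $\Pi$--facial cycle; in each of the three possibilities that cycle contains one of $(u\,t_0\,t_1\,v)$, $(u\,t_2\,t_3\,v)$ as a consecutive portion with endpoints $u$ and $v$, so one of them is a $\Pi$--facial subwalk — a contradiction.

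The harder case is that $\mathcal{Q}$ shares an internal vertex with one of the two given $3$--paths, say with $(u\,t_0\,t_1\,v)$ (the overlap with $(u\,t_2\,t_3\,v)$ is identical, now using that \emph{it} is not facial). Since $G$ is cubic and $\mathcal{Q}\ne(u\,t_0\,t_1\,v)$, a short enumeration of how two $3$--paths joining $u$ and $v$ can meet shows that either \emph{(i)} $\mathcal{Q}=(u\,t_0\,t_0'\,v)$ with $t_0'$ the third neighbour of $t_0$, $t_0'\sim v$ and $t_0'\notin\{t_1,t_3\}$, or \emph{(ii)} the overlap forces a triangle or a $4$--cycle of $G$ on three of $\{u,t_0,t_1,v\}$ plus one more vertex. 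In case (ii), Proposition \ref{3} or \ref{4} makes that short cycle a $\Pi$--facial cycle, which then shares two edges with the facial cycle carrying $\mathcal{Q}$ (the two cycles are distinct, since the latter contains both $u$ and $v$ while the short cycle contains at most one of them), contradicting Definition \ref{poly}. In case (i), the neighbours of $v$ are $t_1,t_3,t_0'$; Proposition \ref{one} applied to the two extensions at $t_0$ of the $2$--path $(v\,t_1\,t_0)$, together with the hypothesis that $(u\,t_0\,t_1\,v)$ is not a $\Pi$--facial subwalk, yields that $(t_0'\,t_0\,t_1\,v)$ \emph{is} a $\Pi$--facial subwalk; but then the facial cycles through $\mathcal{Q}=(u\,t_0\,t_0'\,v)$ and through $(t_0'\,t_0\,t_1\,v)$ are distinct, share the edge $t_0t_0'$, and both pass through $v\notin\{t_0,t_0'\}$, so they do not meet properly, contradicting Definition \ref{poly} once more.

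Since every case yields a contradiction, $(u\,t_2\,t_3\,v)$ must be a $\Pi$--facial subwalk. The step I expect to be the main obstacle is the overlap case: one has to carefully enumerate the ways the witnessing facial $3$--path can intersect each given $3$--path in a cubic graph, and to check that each configuration either produces a short cycle that Propositions \ref{3}/\ref{4} promote to a facial cycle (sharing two edges with the cycle carrying $\mathcal{Q}$), or a pair of distinct facial cycles that fail to meet properly.
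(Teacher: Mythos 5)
Your proposal is correct, and its core is the same as the paper's: assume $(u\,t_2\,t_3\,v)$ is not facial, observe that the scaffold edge $[uv]$ must then be witnessed by a third facial $3$--path, and in the internally disjoint situation invoke Proposition \ref{3path} to force two of the three paths to be facial, contradicting the hypothesis. The difference is in what you do beyond that: the paper's proof simply asserts that the witnessing path is internally disjoint from $(u\,t_0\,t_1\,v)$ and $(u\,t_2\,t_3\,v)$ and stops there, whereas you notice this is not automatic in a cubic graph (the witness could be, say, $(u\,t_0\,t_0'\,v)$) and rule out all overlapping configurations, either by promoting a triangle or $4$--cycle to a facial cycle via Propositions \ref{3} and \ref{4} and exhibiting two shared edges, or, in your case (i), by using Proposition \ref{one} to show $(t_0'\,t_0\,t_1\,v)$ is facial and then exhibiting an improper intersection (a shared edge plus the extra vertex $v$) with the cycle through $(u\,t_0\,t_0'\,v)$; your preliminary disposal of $uv\in E(G)$ via Proposition \ref{4} is also sound (and indeed $G\neq K_4$ here since the two disjoint $3$--paths already give six vertices). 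I checked your enumeration and each overlap case does collapse as you describe (in the doubly-overlapping case $\mathcal{Q}=(u\,t_0\,t_3\,v)$ the relevant short cycle is $(v\,t_1\,t_0\,t_3\,v)$, which fits your description). So your argument buys a more self-contained and rigorous proof that actually justifies the internal-disjointness step the paper takes for granted, at the cost of a longer case analysis; the paper's version buys brevity by leaving that step implicit.
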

\begin{proof}
We proceed by contradiction. If $(u t_2 t_3 v)$ is not a $\Pi$-facial subwalk, then there is a third $3$--path between $u$ and $v$, $(u t_4 t_5 v)$, internally disjoint to $(u t_0 t_1 v)$ and$(u t_2 t_3 v)$ which corresponds to the scaffold edge $[uv]$. But, by Proposition \ref{3path} the edge $[[uv]]$ is double. Since $(u t_0 t_1 v)$ is not a $\Pi$-facial subwalk, necessarily $(u t_2 t_3 v)$ and $(u t_4 t_5 v)$ are $\Pi$-facial subwalks, which is a contradiction. 
\end{proof}

\begin{definition}\label{butterfly}(Butterfly one and two)
Let $B  \subset G^e$ be a subgraph of $G$ with set of vertices $V(B)=\{t_0, t_{0'}, t_1, t_2, t_3, t_4, t_{4'}, t_5, t_{5'}\}$, and set of edges equal to the union of the two cycles  $E(Y)=(t_0 t_1 t_2 t_3 t_4 t_5 t_0) \cup (t_{0'}t_1 t_2 t_3 t_{4'} t_{5'} t_{0'})$, we will call this graph a \emph{butterfly.}
A \emph{butterfly one}, $B_1$, is a butterfly with the additional scaffold edges $[t_1 t_4], [t_1 t_{4'}],  [t_0 t_{3}], [t_{0'} t_{3}].$ A \emph{butterfly two}, $B_2$, is a butterfly with the additional scaffold edges $[t_1 t_4], [t_1 t_{4'}],  [t_0 t_{3}].$ 
\begin{figure}[h]
\begin{center}
\label{fig:butterfly}
\includegraphics[scale=1.3]{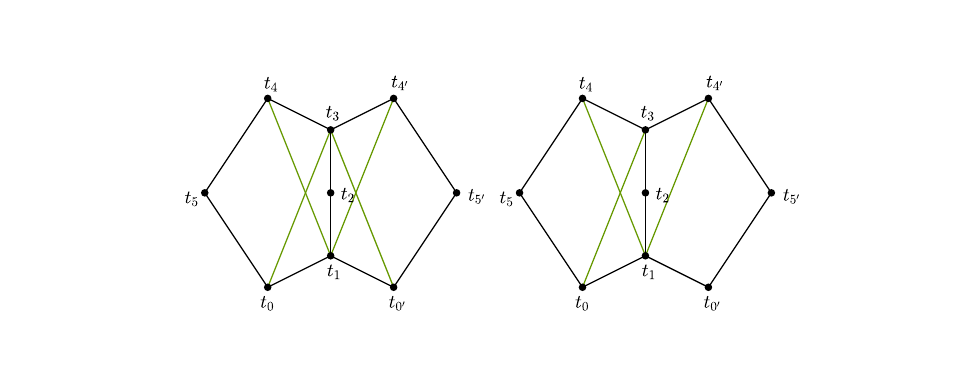}
\caption{The butterfly one is shown on the left and the butterfly two is shown on the right}
\end{center}
\end{figure}
\end{definition}
We are ready to state and proof the main theorems of this section. The spirit is the following: if a fork $Y$  appears as a subgraph of $G^e$ in such a way that the disjunction cannot be solved, then $G^e$ will contain either butterfly one or butterfly two as subgraphs and, in either case, we will be able to reconstruct the full extended graph uniquely.

\begin{lemma}\label{sinduda} Let $Y$ be a fork labelled as in Definition \ref{ye} then if either $(t_1 t_2 t_3 t_4)$ is the only $3$--path between $t_1$ and $t_4$ or $(t_1 t_2 t_3 t_{4'})$ is the only $3$--path between $t_1$ and  $t_{4'}$ then the disjunction can be solved.
\end{lemma}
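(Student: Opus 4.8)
The statement of Lemma~\ref{sinduda} is symmetric in the two branches of the fork, so it suffices to treat the case in which $(t_1 t_2 t_3 t_4)$ is the \emph{only} $3$--path in $G$ joining $t_1$ and $t_4$, and to show that in this situation $G^e$ forces $(t_1 t_2 t_3 t_4)$ to be the $3$--path corresponding to the scaffold edge $[t_1 t_4]$ (equivalently, that $(t_1 t_2 t_3 t_4)$ is a $\Pi$--facial subwalk). Once that is established, the disjunction is resolved: since exactly one of $(t_1 t_2 t_3 t_4)$, $(t_1 t_2 t_3 t_{4'})$ is a $\Pi$--facial subwalk by Proposition~\ref{one}, knowing that the first one is forced tells us the second one is not, and the reconstruction procedure of Theorem~\ref{thm:easy} can proceed past the fork.

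\textbf{Key steps.}
First I would record what it means for the disjunction \emph{not} to be solved: by definition the ambient structure would have to be consistent both with $(t_1 t_2 t_3 t_4)$ being facial and with $(t_1 t_2 t_3 t_{4'})$ being facial. So I argue by contradiction, assuming the disjunction is not solved even though $(t_1 t_2 t_3 t_4)$ is the unique $3$--path between $t_1$ and $t_4$. Next I would invoke the basic fact (stated in the Preliminaries) that the path of length two $(t_1 t_2 t_3)$ lies in some $\Pi$--facial cycle $\mathcal{C}$; since $[t_1 t_4]\in\mathcal{S}$ and the only $3$--path realizing it is $(t_1 t_2 t_3 t_4)$, the scaffold edge $[t_1 t_4]$ can only have been produced by a $\Pi$--facial subwalk equal to $(t_1 t_2 t_3 t_4)$ itself — there is no other candidate $3$--path. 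Hence $(t_1 t_2 t_3 t_4)$ \emph{is} a $\Pi$--facial subwalk in every embedding $\Pi$ whose extended graph is $G^e$, which is exactly the claim that resolves the disjunction. The content of the lemma is therefore that uniqueness of the $3$--path leaves no freedom: the scaffold edge carries a $3$--path with it by construction, and with a unique candidate that candidate is pinned down.

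\textbf{Main obstacle.}
The subtle point — and I expect this to be where care is needed — is the direction of the definition of $G^e$: a scaffold edge $[t_1 t_4]$ is placed in $\mathcal{S}$ \emph{because} some $3$--path between $t_1$ and $t_4$ is a $\Pi$--facial subwalk, but a priori the definition does not tell us \emph{which} $3$--path. What makes the lemma work is precisely that when $(t_1 t_2 t_3 t_4)$ is the only $3$--path in $G$ between its endpoints, there is nothing to choose, so the facial subwalk witnessing $[t_1 t_4]$ must be $(t_1 t_2 t_3 t_4)$. I would phrase this carefully, noting that we are reasoning about an arbitrary $\Pi$ with $G^e(\Pi)=G^e$ — consistent with the convention set at the start of the section that $G^e$ is not assumed to come with a known embedding — and then conclude that the forced facial subwalk is visible purely combinatorially from $G^e$, which is what ``the disjunction can be solved'' means. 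The symmetric case with $t_{4'}$ in place of $t_4$ is identical and needs only a sentence.
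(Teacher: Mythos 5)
Your proposal is correct and takes essentially the same route as the paper: the scaffold edge $[t_1 t_4]$ must, by construction, correspond to some $3$--path that is a $\Pi$--facial subwalk, so when $(t_1 t_2 t_3 t_4)$ is the unique such $3$--path it is forced to be that subwalk, which resolves the disjunction (the paper's proof is just this observation stated in two sentences, without the contradiction framing or the explicit appeal to Proposition~\ref{one}). No gap.
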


\begin{proof}
By Definition, every scaffold edge corresponds to a $3$-path. Thus, when there is only one $3$-path between the end vertices of a scaffold edge necessarily such path is its corresponding $3$-path. 
\end{proof}
Next we will prove different theorems. A long of the proofs we state different claims and the procedure that we use is given a claim followed immediately by its proof. The figures that shows the following results appears in the Apendix at the end of the paper. 
\begin{lemma}\label{containedbutterfly}
Let $Y$ be a fork labelled as in Definition \ref{ye}  such that the disjunction cannot be solved then $G^e$ contains a subgraph isomorphic to butterfly one, $B_1,$ or butterfly two, $B_2$.
\end{lemma}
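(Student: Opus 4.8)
\textbf{Proof proposal for Lemma \ref{containedbutterfly}.}

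The plan is to start from a fork $Y$ whose disjunction cannot be solved and squeeze out, step by step, all the structure forced by that hypothesis, until a butterfly $B_1$ or $B_2$ emerges. First I would use Lemma \ref{sinduda}: since the disjunction is unsolvable, neither $[t_1 t_4]$ nor $[t_1 t_{4'}]$ can have a unique corresponding $3$--path, so there must be a second $3$--path between $t_1$ and $t_4$ internally disjoint from $(t_1 t_2 t_3 t_4)$ — call its interior vertices $t_5, t_0$, giving a path $(t_1 t_0 t_5 t_4)$ — and likewise a second $3$--path between $t_1$ and $t_{4'}$, say $(t_1 t_{0'} t_{5'} t_{4'})$. (I will need to check that these extra paths really are new vertices and not among $\{t_2,t_3\}$; here Proposition \ref{edge} and Proposition \ref{pathtwo} applied to the relevant facial cycles, together with cubicity at $t_1$, rule out degeneracies — if $t_1$ already has neighbours $t_2$ and, say, $t_0$, then the two ``second paths'' must both start with the edge $(t_1 t_0)$, so $t_0 = t_{0'}$ and we are forced into a shared branch.) At this point the union of the cycle $(t_1 t_2 t_3 t_4 t_5 t_0 t_1)$-type walk and its primed counterpart already has the vertex set and (non-scaffold) edge set of a butterfly, sharing the path $(t_1 t_2 t_3)$.

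The second block of the argument is to produce the remaining scaffold edges $[t_0 t_3]$ and, for $B_1$, $[t_{0'} t_3]$. The key leverage is that the disjunction being unsolvable is a symmetric, bidirectional condition: the extended-graph structure must fail to force $(t_1 t_2 t_3 t_4)$ to be the path of $[t_1 t_4]$, which — via Proposition \ref{one}, Corollary \ref{otro} and Corollary \ref{pelito} — means that whichever way a hypothetical embedding routes the face through $t_1 t_2 t_3$, the graph cannot tell. Concretely I would argue: if $[t_0 t_3] \notin \mathcal S$, then in any embedding the facial cycle through $(t_0 t_1 t_2)$ and the one through $(t_1 t_2 t_3)$ interact in a constrained way; tracing the forced facial subwalks out of $t_0$ and $t_3$ using Proposition \ref{one} (at each degree--$3$ vertex exactly one of the two continuations is facial) shows the embedding would in fact be determined at the fork, contradicting unsolvability. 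So $[t_0 t_3]\in\mathcal S$; symmetrically, examining the $t_4$/$t_{4'}$ side, one gets that at least one of $[t_0 t_3]$, $[t_{0'} t_3]$ is present, and if the disjunction is unsolvable "from both ends'' both are present, yielding $B_1$; otherwise exactly the asymmetric configuration $B_2$ appears.

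The main obstacle I anticipate is precisely this forcing/non-forcing bookkeeping in the second block: one has to convert the negative statement ``the rest of $G^e$ does not force either branch'' into the positive existence of specific scaffold edges, and the natural way to do that is contrapositive — assume the scaffold edge is absent, then exhibit a chain of applications of Proposition \ref{one}, Proposition \ref{pathtwo}, Corollary \ref{otro}, and Corollary \ref{pelito} that pins down the facial cycle at the fork, contradicting the hypothesis. Handling this cleanly will likely require breaking into cases according to which of the eight possible face-routings at the two ends $t_4,t_{4'}$ and the hub edge $(t_1 t_2 t_3)$ are consistent, and for each showing either that it determines the disjunction or that it forces a new scaffold edge. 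A secondary nuisance is the degenerate overlaps (when some of $t_0,t_5,t_{0'},t_{5'}$ coincide with each other or with $t_4,t_{4'}$): I expect Proposition \ref{3path} and its corollaries handle the "$3$ internally disjoint $3$--paths" collisions, forcing a double scaffold edge and hence again a recognizable (collapsed) butterfly, but each collapse needs to be checked against Definition \ref{butterfly} to confirm it still contains $B_1$ or $B_2$ as a subgraph. I would organize the write-up as a sequence of short claims (each claim immediately followed by its proof, as the authors announce), first establishing the two branch-paths, then $[t_0 t_3]\in\mathcal S$, then the case split giving $B_1$ versus $B_2$.
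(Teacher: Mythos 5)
The core gap is in your first block. You set up the two extra $3$--paths as $(t_1 t_0 t_5 t_4)$ and $(t_1 t_{0'} t_{5'} t_{4'})$, i.e.\ branching through the two distinct remaining neighbours $t_0, t_{0'}$ of $t_1$. That is only one of three possibilities, and it is the easy one (the paper's Case A, dispatched in a short paragraph). Since $t_1$ has only three neighbours, the second $3$--path for $[t_1 t_{4'}]$ may instead also leave $t_1$ through $t_0$: either as $(t_1 t_0 x_0 t_{4'})$ with $x_0$ a new neighbour of $t_0$ (the paper's Case B), or as $(t_1 t_0 t_5 t_{4'})$ reusing $t_5$ (the paper's Case C, which forces the $4$--cycle $(t_3 t_4 t_5 t_{4'} t_3)$ to be facial). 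You flag this as a ``shared branch'' degeneracy but give no argument for it, and it is not absorbed by your remark about Proposition \ref{3path}: in these cases the union of the edges found so far is not a butterfly at all, and the paper has to manufacture \emph{new} forks (e.g.\ on $\{t_5, t_0, t_1, t_{0'}, t_2\}$ in Case B2, or on $\{t_{4'}, t_3, t_2, t_1, x_2\}$ in Case C), apply the unsolvability hypothesis to those, and chase several further sub-cases before a butterfly --- often centred on a different $3$--path than $(t_1 t_2 t_3)$ --- finally appears. This constitutes the bulk of the paper's proof and is entirely missing from your plan.

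A second, smaller problem is in your second block: the claim that $[t_0 t_3]\notin\mathcal S$ forces a contradiction is too strong. If it were correct, the symmetric argument would also force $[t_{0'} t_3]\in\mathcal S$, so $B_2$ (which by Definition \ref{butterfly} carries only one of the two) could never arise --- yet it does, and the paper's Case B2 explicitly proceeds under the assumption $[t_0 t_3]\notin\mathcal S$ without reaching a contradiction. The correct (and much shorter) observation is that by Proposition \ref{one} one of $(t_0 t_1 t_2 t_3)$, $(t_{0'} t_1 t_2 t_3)$ is a $\Pi$--facial subwalk, hence at least one of $[t_0 t_3]$, $[t_{0'} t_3]$ lies in $\mathcal S$; if both do you get $B_1$, if exactly one does you get $B_2$ after relabelling. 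No contrapositive chase is needed there, and attempting one would prove something false.
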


\begin{proof}
Since $G$ is a cubic graph, let $t_0$ and $t_{0'}$ be the remaining vertices adjacent to $t_1,$ different from $t_2.$ 

\emph{Claim 1. The vertices $t_0$ and $t_{0'}$ are different to each $
 t_3, t_4, t_{4'}.$}\\
Note that the proofs of $t_0$ and $t_{0'}$ must be analogous. We only detail the proof for $t_0.$ As $G$ is a $3$--regular graph, $t_0 \neq t_3$ (otherwise $t_3$ would have degree four).
If $t_0=t_4$, then by Proposition \ref{4}, $(t_1 t_2 t_3 t_4 t_1)$ is $\Pi$-facial cycle, and $(t_1 t_2 t_3 t_4)$ is forced to be the  $3$-path corresponding to $[t_1 t_4]$ thus the disjunction can be solved. Hence $t_0\neq t_4$. The case $t_0\neq t_{4'} $ is analogous to the case $t_0\neq t_4$. $\boxminus$\\
Applying Lemma \ref{sinduda} we obtaing the following:\\
\emph{Claim 2. For both edges $[t_1 t_4]$ and $[t_1 t_{4'}]$ there exist at least two internally disjoint $3$-paths between its end vertices.}\\

Without loss of generality we may say that $(t_1 t_0 t_5 t_4)$ is the additional $3$--path with ends $[t_1 t_4],$ internally disjoint from $(t_1 t_2 t_3 t_4)$. Note that then $t_5 \neq t_0, t_1, t_2, t_3, t_4.$

\emph{Claim 3. The vertex $t_5$ is different to the vertices $t_{0'}, t_{4'}$.}\\
If $t_5 = t_{4'}$, Proposition \ref{3} implies that $(t_3 t_4 t_{4'} t_3)$ is a $\Pi$-facial cycle; by Proposition \ref{one} $(t_2 t_3 t_{4'} t_0)$ is a $\Pi$-facial subwalk; Proposition \ref{pathtwo} implies that $(t_0 t_1 t_2 t_3 t_{4'} t_0)$ is a $\Pi$-facial cycle, so $(t_1 t_2 t_3 t_{4'})$ is a $3$-path corresponding to $[t_1 t_{4'}]$ ; hence, the disjunction could be solved, contradicting the hypothesis. Therefore $t_5 \neq t_{4'}.$ \\
If $t_5 =t_{0'}$, using a similar arguments we would arrive to the conclusion that $(t_1 t_2 t_3 t_4)$ is a $3$--path corresponding to $[t_1 t_4]$ contradicting our hypothesis.  Therefore $t_5 \neq t_{0'}.$
$\boxminus$

Now, we will give the additional $3$-path between $t_1$ and $t_{4'}$ internally disjoint to $(t_1 t_2 t_3 t_{4'})$. We have three cases: (Case A) when the $3-$path is $(t_1 t_{0'} t_{5'} t_{4'})$, (Case B) when the $3-$path is $(t_1 t_0 x_0 t_{4'})$ and (Case C) when the $3-$path is $(t_1 t_0 t_5 t_{4'})$. We will deal with each case separately.

\emph{\textbf{Case A.}} Assume the $3-$path is $(t_1 t_{0'} t_{5'} t_{4'})$. Since $G$ is a cubic graph and all the previous vertices already have degree two,  $t_{5'}$ must be a new vertex of $G$, otherwise we would have a vertex with degree four. By Proposition \ref{one}, given that the $3$--paths, $(t_0 t_1 t_2 t_3) $ and $(t_{0'} t_1 t_2 t_3)$ are in $ G$, one of these paths is a $\Pi$--facial subwalk. So we have two cases: either $[t_0 t_3]$ and $[t_{0'}t_3]$ are in $\mathcal{S}$ which implies that $B_1 \subset G^e$, or only of them ($[t_{0}t_3]$ or $[t_{0'}t_3]$) is in $\mathcal{S}$, which implies that $B_2 \subset G^e$. Either way, the assertion of the lemma follows. \emph{This ends the proof for Case A.}$\boxminus$

\emph{\textbf{Case B.}} Assume the $3-$path is $(t_1 t_0 x_0 t_{4'})$. First, observe that Claim 3 implies that $x_0$ is a new vertex. Moreover, the $3-$path $(t_0 t_1 t_2 t_3)$ is in $G$ and it is possible that $[t_0 t_3]\in \mathcal{S}$ or $[t_0 t_3]\not \in \mathcal{S}$. We will analyse both cases:

\emph{Case B1.} $[t_0 t_3]\in \mathcal{S}$.

\setlength{\leftskip}{0.5cm}
\emph{Claim B1.1. $[[t_0 t_3]]$ is a double scaffold edge.} \\
Since there are three internally disjoint $3-$paths between $t_0$ and $t_3$: $(t_0 t_1 t_2 t_3)$, $(t_0 t_5 t_4 t_3)$ and $(t_0 x_0 t_{4'} t_3)$, then $[[t_0 t_3]]$ is a double scaffold edge,  by Corollary \ref{3path}. $\boxminus$\\
\newline
\emph{Claim B1.2. $[t_1 t_4]$ or $[t_1 t_{4'}]$ is a double scaffold edge.} \\
Note that if either $[t_1 t_4]$ or $[t_1 t_{4'}]$ is a double scaffold edge then we necessarily have three 3-paths internally disjoint between the pair $t_1,t_4$ in the first instance and $t_1,t_{4'}$ in the second instance, which would confirm our claim.\\
Thus, we may now suppose that neither $[t_1 t_4]$ nor $[t_1 t_{4'}]$ are double scaffold edges, then $(t_0 t_5 t_4 t_3 t_{4'} x_0 t_0)$ is a $\Pi$--facial cycle, by Corollary \ref{3path}. The latter and Corollary \ref{otro} imply that $(t_1 t_2 t_3 t_4)$ and $(t_1 t_2 t_3 t_{4'})$ are the $\Pi$--facial subwalks corresponding to $[t_1 t_4]$ and $[t_1 t_{4'}]$ respectively, which contradicts Definition \ref{poly}. Hence, at least one of $[t_1 t_4]$ or $[t_1 t_{4'}]$ has to be a double scaffold edge.$\boxminus$

\emph{Claim B1.3. $B_1$ or $B_2$ is contained in $G^e$.}\\
Without loss of generality, suppose that $[[t_1 t_{4'}]]$ is a double scaffold edge. Observe that $(t_1 t_{4'})\not \in E(G)$, otherwise it would contradict the $3-$regularity of $G$. Since there are already two internally disjoint $3$--paths between $t_1$ and $t_{4'}$: $(t_1 t_2 t_3 t_{4'})$ and $(t_1 t_0 x_0 t_{4'})$, we have to have a third one, else $(t_1 t_2 t_3 t_{4'} x_0 t_0 t_1)$ would be a $\Pi$--facial cycle, by Proposition \ref{pizzageneralization}, which implies that $(t_1 t_2 t_3 t_{4'})$ is a $\Pi$--facial subwalk. That is, the disjunction can be solved, contradicting our hypothesis. \\
Let $(t_1 t_{0'} t_{5'} t_{4'})$ be the third path. Notice that $t_{5'}$ is a new vertex, otherwise it would contradict the hypothesis that $G$ is a cubic graph. Then, if $[t_{0'}t_3]\in \mathcal{S}$ then $B_1 \subset G^e$ or if $[t_{0'}t_3]\not\in \mathcal{S}$ then $B_2 \subset G^e$. \\
If we assume instead that $[[t_1 t_4]]$ is a double scaffold edge, the proof follows in a similar way. $\boxminus$

\setlength{\leftskip}{0cm}
This completes the proof for \emph{Case B1.}

\emph{Case B2.} $[t_0 t_3]\not\in \mathcal{S}$. This implies that $(t_0 t_1 t_2 t_3)$ is not a $\Pi$--facial subwalk and, by Proposition \ref{one}, $(t_{0'} t_1 t_2 t_3)$ is a $\Pi$--facial subwalk, hence $[t_{0'} t_3]\in \mathcal{S}$.

\setlength{\leftskip}{0.5cm}
\emph{Claim B2.1 If $(t_{0'}t_1 t_2 t_3 t_4)$ is a $\Pi$--facial subwalk then $(t_5 t_0 t_1 t_2)$ is a $\Pi$--facial subwalk. If $(t_{0'}t_1 t_2 t_3 t_{4'})$ then $(t_{0'} t_1 t_0 t_5 t_4)$ is a $\Pi$--facial subwalk.}\\
Suppose that $(t_{0'}t_1 t_2 t_3 t_4)$ is a $\Pi$--facial subwalk. The later implies that $(t_1 t_2 t_3 t_{4'})$ is not a $\Pi$--facial subwalk. Since $[t_1 t_{4'}]\in \mathcal{S}$ and using Corollary \ref{otro}, $(t_1 t_0 x_0 t_{4'})$ is a $\Pi$--facial subwalk. This $\Pi$--facial subwalk cannot contain the edge $(t_1 t_2)$ otherwise, by Proposition \ref{pathtwo}, $(t_1 t_0 x_0 t_{4'} t_3 t_2 t_1)$ would be a $\Pi$--facial cycle intersecting the $\Pi$--facial subwalk $(t_{0'} t_1 t_2 t_3)$ in two edges, which contradicts the definition of polyhedral embedding. Then, $(t_{0'} t_1 t_0 x_0 t_{4'})$ is a $\Pi$--facial subwalk. We know that two different $\Pi$--facial cycles pass through each edge of $G$; this fact together with Definition \ref{poly} and Proposition \ref{one} imply that $(t_5 t_0 t_1 t_2)$ is a $\Pi$--facial subwalk. \\
Now, suppose that $(t_{0'} t_1 t_2 t_3 t_{4'})$ is a $\Pi$--facial subwalk. This implies that $(t_1 t_2 t_3 t_4)$ is not the $\Pi$--facial subwalk corresponding to the scaffold edge $[t_1 t_4]$. Hence, using Corollary \ref{otro} we know that $(t_1 t_0 t_5 t_4)$ is the $\Pi$--facial subwalk corresponding to the scaffold edge $[t_1 t_4]$. This $\Pi$--facial subwalk cannot contain the edge $(t_1 t_2)$, otherwise $(t_2 t_1 t_0 t_5 t_{4} t_3 t_2)$ would be a $\Pi$--facial cycle, by Proposition \ref{pathtwo}, and it would intersect the $\Pi$--facial subwalk $(t_{0'} t_1 t_2 t_3)$ in two edges, contradicting the definition of polyhedral embedding. Then, $(t_{0'} t_1 t_0 t_5 t_4)$ is a $\Pi$--facial subwalk.$\boxminus$

\emph{Claim B2.2 The fork $Y'$ given by the set of vertices $\{t_5, t_0, t_1, t_{0'}, t_2 \}$, and the edges, $\{(t_5 t_0), (t_0 t_1), (t_1 t_2), (t_1 t_{0'}), [t_5 t_2], [t_5 t_{0'}]\}$ is contained in $G^e$.} \\
If the statement were false, i.e., if one of the scaffold edges $[t_5 t_2]$ or $[t_5 t_{0'}]$ is not in $\mathcal{S}$, then the disjunction can be solved, contradicting our hypothesis. Then, the statement follows. $\boxminus$\\
    
In a way similar to our treatment of the cases for fork $Y$, we will analyse the fork $Y'.$ We need to have two internally disjoint $3$--paths for each pair of vertices $\{t_5, t_2\}$ and $\{t_5, t_{0'}\}$. Notice that for the pair $\{t_5, t_2\}$ we already have two internally disjoint $3$--paths: $(t_5 t_4 t_3 t_2)$ and $(t_5 t_0 t_1 t_2)$. Therefore, we only need to find a second $3$--path between $t_5$ and $t_{0'}$. We have three cases:\\

\emph{Case B2.A $(t_5 t_4 t_3 t_{0'})$ cannot be the second $3$--path between $t_5$ and $t_{0'}$}. \\
Suppose the second $3$--path is $(t_5 t_4 t_3 t_{0'})$. This would imply that $t_3$ has degree four, contradicting that $G$ is a cubic graph. $\boxminus$.

\emph{Case B2.B If $(t_5 t_4 x_4 t_{0'})$ is the second $3$--path  between $t_5$ and $t_{0'}$ then $B_2$ is contained in $G^e$. }\\
Suppose the second $3$--path is $(t_5 t_4 x_4 t_{0'})$, where $x_4$ is the remaining vertex adjacent to $t_4$. Observe that $x_4$ is a new vertex, since $x_4$ is different to $t_3, t_4, t_5$, and if $x_4$ is equal to one of $t_0,\ t_1,\ t_2,\ t_{4'}, x_0$, that would contradict the fact that $G$ is a cubic graph. Finally, $x_4 \neq t_{0'}$, otherwise, by Proposition \ref{pathtwo}, $(t_{0'} t_1 t_2 t_3 t_4 t_{0'})$ would be a $\Pi$--facial cycle, implying that $(t_1 t_2 t_3 t_4)$ is a $\Pi$--facial subwalk, contradicting our hypothesis. \\
Hence, we would have to $B_2$ contained in $G^e$, given by the set of vertices: $\{t_0, t_{0'}, t_1, t_2, t_3, t_4, t_{4'}, x_0, x_4\}$ and by the set of edges of the two $6$--cycles: $(t_1 t_2 t_3 $ $t_{4'} x_0 t_0 t_1)\cup (t_1 t_2 t_3 t_4 x_4 t_{0'} t_1)$ and the scaffold edges $\{[t_1 t_4], [t_1 t_{4'}], [t_{0'} t_3]\}$. $\boxminus$

\emph{Case B2.C $(t_5 x_5 x_{0'} t_{0'})$ cannot be the second $3$--path.}\\
Suppose the second $3$--path is $(t_5 x_5 x_{0'} t_{0'})$, where $x_5$ and $x_{0'}$ are the remaining vertices adjacent to $t_5$ and $t_{0'}$ respectively.\\ 
Observe that $x_{0'}$ and $x_5$ are different from the vertices $t_0, t_{0'}, t_1, t_2, t_3, t_4, t_5$ by a similar proof to the case A. Also $x_{0'}$ and $x_5$ are different from the vertices $t_{4'}$ and $x_0$, otherwise contradicts the fact that $G$ is a cubic graph. So there is a butterfly contained in $G$ given by the union of the two $6$--cycles $(t_0,  t_1, t_2, t_3, t_4, t_5 t_0)$$\cup$$(t_0 t_1 t_{0'} x_{0'} x_5 t_5 t_0)$ and the set of scaffold edges $\{[t_5 t_2], [t_5 t_{0'}], [t_1 t_4], [t_1 x_5]\}$ if is a butterfly one, or the set of scaffold edges $\{[t_5 t_2], [t_5 t_{0'}], [t_1 t_4]\}$ if is a butterfly two. 

This completes the proof for \emph{Case B2} and Case B. $\boxminus$\\


\emph{\textbf{Case C.}} Assume that the $3$--path is $(t_1 t_0 t_5 t_{4'})$. Notice that $(t_3 t_4 t_5 t_{4'} t_3)$ is a $\Pi$--facial cycle of $G$. \\
Suppose that $[t_0 t_3]\in \mathcal{S}$. Since $(t_3 t_4 t_5 t_{4'} t_3)$ is a $\Pi$--facial cycle, $(t_0 t_5 t_4 t_3)$ cannot be a $\Pi$--facial subwalk, by Propositon \ref{one}. Using Corollary \ref{otro}, $(t_0 t_1 t_2 t_3)$ is the $\Pi$--facial subwalk corresponding to $[t_0 t_3]$. The $\Pi$--facial subwalk $(t_0 t_1 t_2 t_3)$ cannot continue trough the vertex $t_4$, otherwise by Proposition \ref{pathtwo} implies that $(t_0 t_1 t_2 t_3 t_4 t_5 t_0)$ is a $\Pi$--facial cycle, and it would intersect the $\Pi$--facial cycle $(t_3 t_4 t_5 t_{4'} t_3)$ in two edges, contradicting the definition of polyhedral embedding. So $(t_0 t_1 t_2 t_3 t_{4'})$ is a $\Pi$--facial subwalk, i.e., the disjunction can be solved, contradicting out hypothesis. \\
Then, suppose that $[t_0 t_3]\not\in \mathcal{S}$. Since $(t_0 t_1 t_2 t_3)$ is not a $\Pi$--facial subwalk, using Proposition \ref{one} implies that $(t_{0'} t_1 t_2 t_3)$ is a $\Pi$--facial subwalk. Thus either $(t_{0'} t_1 t_2 t_3 t_4)$ or $(t_{0'} t_1 t_2 t_3 t_{4'})$ is a $\Pi$--facial subwalk.

\emph{Claim C1. The remaining vertex adjacent to $t_2$, $x_2,$ is a new vertex of $G$.} \\
We are going to proceed by contradiction:
\begin{enumerate}
    \item $(x_2 \neq t_5)$. If $x_2 = t_5$ it would contradict the $3$--regularity of $G$. 
    \item $(x_2\neq t_4)$. If $x_2 = t_4$ then $(t_2 t_3 t_4 t_2)$ is a $\Pi$--facial cycle, by Proposition \ref{3}. The latter implies that $(t_{0'} t_1 t_2 t_3 t_4)$ cannot be a $\Pi$--facial subwalk, since it would intersect the $\Pi$--facial cycle $(t_2 t_3 t_4 t_2)$ in two edges, contradicting the polyhedrality of the embedding. Then, by Proposition \ref{one}, $(t_{0'} t_1 t_2 t_3 t_{4'})$ is a $\Pi$--facial subwalk, i.e., $(t_1 t_2 t_3 t_{4'})$ is the $\Pi$--facial subwalk corresponding to the edge $[t_1 t_{4'}]$, and the disjunction can be solved, contradicting out hypothesis. 
    \item $(x_2\neq t_{4'})$ This proof follows in a manner similar to the previous one.
    \item $(x_2 \neq t_{0'})$. If $x_2= t_{0'}$ it would contradict the definition of polyhedral embedding, since $(t_{0'} t_1 t_2 t_3)$ is a $\Pi$--facial subwalk. 
    \item $(x_2 \neq t_0)$. If $x_2 = t_0$ then $(t_0 t_1 t_2 t_0)$ would be a $\Pi$--facial cycle, by Proposition \ref{3}. Since there are two different $\Pi$--facial cycles passing through every edge, then Proposition \ref{one} implies that $(t_3 t_2 t_0 t_5)$ is a $\Pi$--facial subwalk. Thus, $(t_3 t_2 t_0 t_5 t_4 t_3)$ is a $\Pi$--facial cycle, by Proposition \ref{pathtwo}. But notice that this would contradict the polyhedrality of the embedding.  
\end{enumerate}
Hence, $x_2$ is a new vertex. $\boxminus$

\emph{Claim C2. If $(t_{0'} t_1 t_2 t_3 t_4)$ is a $\Pi$--facial subwalk then $(x_2 t_2 t_3 t_{4'})$ is a $\Pi$--facial subwalk.}\\
Notice that $(t_1 t_2 t_3 t_4)$ is the $\Pi$--facial subwalk corresponding to $[t_1 t_4]$. Given that two different $\Pi$--facial cycles pass through every edge and using Proposition \ref{one} we can deduce that $(x_2 t_2 t_3 t_{4'})$ is a $\Pi$--facial subwalk. $\boxminus $

\emph{Claim C3. If $(t_{0'} t_1 t_2 t_3 t_{4'})$ is a $\Pi$--facial subwalk then $(t_1 t_2 t_3 t_{4'})$ is a $\Pi$--facial subwalk.}\\
This statement follows immediately from the hypothesis, since $(t_1 t_2 t_3 t_{4'})$ is contained in the $\Pi$--facial subwalk $(t_{0'} t_1 t_2 t_3 t_{4'})$.
$\boxminus$

\emph{Claim C4. The fork $Y'$ given by the set of vertices $\{t_{4'}, t_3, t_2, t_1, x_2\}$ and the set of edges $\{(t_3 t_{4'}), (t_2 t_3), (t_1 t_2), (t_2 x_2), [t_1 t_{4'}], [t_{4'} x_2]\} $. is contained in $G^e$.}\\
Notice that only remains to prove that $[t_{4'} x_2]$ is in $\mathcal{S}$. If this scaffold edge is not in $\mathcal{S}$, then $(x_2 t_2 t_3 t_{4'})$ is not a $\Pi$--facial subwalk. The latter and using Proposition \ref{one} imply that $(t_{0'} t_1 t_2 t_3 t_{4'})$ is a $\Pi$--facial subwalk. Thus there is no disjunction contradicting our hypothesis.
$\boxminus$\\
Let $x_{4'}$ be the remaining vertex adjacent to $t_{4'}$. 

\emph{Claim C5. $(t_2 t_3 t_{4'} x_{4'})$ is a $\Pi$--facial subwalk.} \\
Since every path of length two is a $\Pi$--facial subwalk, then $(t_2 t_3 t_{4'})$ is a $\Pi$--facial subwalk. But this $\Pi$--facial subwalk cannot continue to $t_5$, otherwise it would intersect the $\Pi$--facial cycle $(t_3 t_4 t_5 t_{4'} t_3)$ in two edges, contradicting Definition \ref{poly}. Hence, we have to continue towards the remaining neighbour of $t_{4'}$, $x_{4'}$, and the statement follows. $\boxminus$

\emph{Claim C6. There are two internally disjoint $3$--paths for each pair of vertices $\{t_1, t_{4'}\}$ and $\{t_{4'}, x_2\}$. }\\
It follows immediately by Lemma \ref{sinduda}. 
$\boxminus$

The two internally disjoint $3$--paths between $t_1$ and $t_{4'}$ are: $(t_1 t_2 t_3 t_{4'})$ and $(t_1 t_0 t_5 t_{4'})$. 
The first of the two internally disjoint $3$--paths between $t_{4'}$ and $x_2$ is $(t_{4'} t_3 t_2 x_2)$; as for the second there are three options: (Case C1) $(t_{4'} t_5 t_0 x_2)$, (Case C2) $(t_{4'} t_5 t_4 x_2)$ and (Case C3) $(t_{4'} x_{4'} x_{2'} x_2)$. We will deal with each instance separately.

\emph{Case C1.} Assume that the second internally disjoint $3$--path between $t_{4'}$ and $x_2$ is $(t_{4'} t_5 t_0 x_2)$. 
\\
\emph{Claim C1.1. The remaining vertex adjacent to $t_{4'}$, $x_{4'}$, is a new vertex.}\\
We are going to proceed by contradiction:
\begin{enumerate}
    \item ($x_{4'}\neq t_0$, $x_{4'}\neq t_1$, $x_{4'}\neq t_2$, $x_{4'}\neq t_3$, $x_{4'}\neq t_5$). If any of the inequalities was an equality instead we would have a contradiction on the fact that $G$ is a cubic graph. 
    \item ($x_{4'}\neq t_{0'}$). If $x_{4'}=t_{0'}$ then by Proposition \ref{pathtwo}, $(t_{0'} t_1 t_2 t_3 t_{4'} t_{0'})$ is a $\Pi$--facial cycle because $(t_{0'} t_1 t_2 t_3)$ is a $\Pi$--facial subwalk. This implies that $(t_1 t_2 t_3 t_{4'})$ is the $\Pi$--facial subwalk corresponding to the edge $[t_1 t_{4'}]$, i.e., the disjunction can be solved, contradicting our hypothesis. Hence $x_{4'}\neq t_{0'}$. 
    \item ($x_{4'}\neq t_4$). If $x_{4'} = t_4$, since $(t_3 t_4 t_5 t_{4'} t_3 )$ is a $\Pi$--facial cycle this contradicts Proposition \ref{edge}. Hence $x_{4'} \neq x_4$.
    \item ($x_{4'}\neq x_2$). If $x_{4'}=x_2$ then the $\Pi$--facial subwalk $(t_{0'} t_1 t_2 t_3 )$ cannot continue through $t_{4'}$, since there is a $2$--path $(t_2 x_2 t_{4'})$, which would contradict Proposition \ref{pathtwo}. Then $(t_{0'} t_1 t_2 t_3 )$ continues through $t_4$, implying that $(t_1 t_2 t_3 t_4)$ is the $\Pi$--facial subwalk corresponding to $[t_1 t_4]$, i.e., the disjunction can be solved, contradicting our hypothesis. Hence $x_{4'}\neq x_2$.
\end{enumerate}
Thus, $x_{4'}$ is a new vertex. $\boxminus$\\
\\
\emph{Claim C1.2. The remaining vertex adjacent to $t_{4}$, $x_{4}$, is a new vertex.}\\
We are going to proceed by contradiction:
\begin{enumerate}
    \item ($x_{4}\neq t_0$, $x_{4}\neq t_1$, $x_{4}\neq t_2$, $x_{4}\neq t_3$, $x_{4}\neq t_5$, $x_4 \neq t_{4'}$). If any of the inequalities was an equality instead we would have a contradiction on the fact that $G$ is a cubic graph. 
    \item ($x_4 \neq x_2$). If $x_{4}=x_2$ then by Proposition \ref{4}, $(t_2 t_3 t_4 x_2 t_2)$ is a $\Pi$--facial cycle. Thus the $\Pi$--facial subwalk $(t_{0'} t_1 t_2 t_3)$ cannot continue through $t_4$, otherwise it would contradict the polyhedrality of $G$. Then $(t_{0'} t_1 t_2 t_3 t_{4'} x_{4'})$ is a $\Pi$--facial subwalk. This implies that $(t_1 t_2 t_3 t_{4'})$ is the $\Pi$--facial subwalk corresponding to $[t_1 t_{4'}]$, therefore the disjunction can be solved, contradicting our hypothesis. Hence $x_4 \neq x_2$. 
    \item ($x_4 \neq t_{0'}$). If $x_4 = t_{0'}$ then using Proposition \ref{pathtwo} it can be said that $(t_{0'} t_1 t_2 t_3 t_4 t_{0'})$ is a $\Pi$--facial cycle. The latter implies that $(t_1 t_2 t_3 t_4)$ is the $\Pi$--facial subwalk corresponding to $[t_1 t_4]$, which contradicts our hypothesis. Hence $x_4 \neq t_{0'}$.  
    \item ($x_4 \neq x_{4'}$). If $x_4 =x_{4'}$, since $(t_3 t_4 t_5 t_{4'} t_3)$ is a $\Pi$--facial cycle, it would contradict Proposition \ref{pathtwo}.
\end{enumerate}
Hence, $x_4$ is a new vertex. $\boxminus$\\
\\
\emph{Claim C1.3. The remaining vertex adjacent to $x_{2}$, $x_{2'}$, is a new vertex.}\\
We are going to proceed by contradiction:
\begin{enumerate}
    \item ($x_{2'}\neq t_0, x_{2'}\neq t_1 ,x_{2'}\neq t_2, x_{2'}\neq t_3, x_{2'}\neq t_4, x_{2'}\neq t_{4'}, x_{2'}\neq t_5$). If any of the inequalities was an equality instead we would have a contradiction on the fact that $G$ is a cubic graph. 
    \item ($x_{2'}\neq t_{0'}$). If $x_{2'}=t_{0'}$ since $(t_0 t_1 t_2 x_2 t_0)$ is a $\Pi$--facial cycle, it would contradict Proposition \ref{pathtwo}. Hence $x_{2'}\neq t_{0'}$. 
    \item ($x_{2'} \neq x_4$). If $x_{2'}=x_4$ then $(x_4 x_2 t_2 t_3)$ is a $\Pi$--facial subwalk, since $(t_0 t_1 t_2 x_2 t_0)$ is a $\Pi$--facial cycle. Using Proposition \ref{pathtwo} and the latter imply that $(x_4 x_2 t_2 t_3 t_4 x_4)$ is a $\Pi$--facial cycle. Therefore the $\Pi$--facial subwalk $(t_{0'} t_1 t_2 t_3)$ cannot continue though $t_4$, otherwise it would contradict the polyhedrality of the embedding. Then $(t_{0'} t_1 t_2 t_3 t_{4'})$ is a $\Pi$--facial subwalk, which implies that $(t_1 t_2 t_3 t_{4'})$ is the $\Pi$--facial subwalk corresponding to $[t_1 t_{4'}]$ and the disjunction can be solved, contradicting our hypothesis. Hence $x_{2'} \neq x_4$. 
    \item ($x_{2'} \neq x_{4'}$). This proof follows in a similar way that the previuos one. 
\end{enumerate}
Thus $x_{2'}$ is a new vertex of $G$. $\boxminus$ \\
\\
\emph{Claim C1.4. Either $(t_1 t_2 t_3 t_{4'})$ or $(t_1 t_0 t_5 t_{4'})$ is a $\Pi$--facial subwalk.}\\
We know that $[t_1 t_4]$ and $[t_1 t_{4'}]$ are in $\mathcal{S}$, also Proposition \ref{one} implies that either $(t_1 t_2 t_3 t_4)$ or $(t_1 t_2 t_3 t_{4'})$ is a $\Pi$--facial subwalk. \\
If $(t_1 t_2 t_3 t_{4'})$ is a $\Pi$--facial subwalk the statement follows. If, on the other hand, $(t_1 t_2 t_3 t_4)$ is not a $\Pi$--facial subwalk, Corollary \ref{otro} implies that $(t_1 t_0 t_5 t_{4'})$ is the $\Pi$--facial subwalk corresponding to $[t_1 t_{4'}]$. 
$\boxminus$\\
\\
\emph{Claim C1.5. For either choice between $(t_1 t_2 t_3 t_{4'})$ or $(t_1 t_0 t_5 t_{4'})$ being a $\Pi$--facial subwalk,  there is a $\Pi$--facial subwalk $\mathcal{P}=(t_1 t_{0'} \ldots x_{4'} t_{4'})$, such that either in $\mathcal{P}\cup (t_1 t_2 t_3 t_{4'})$ is a $\Pi$--facial cycle in the first instance or $\mathcal{P}\cup (t_1 t_0 t_5 t_{4'})$ is a $\Pi$--facial cycle in the second instance.}\\
First, suppose that $(t_1 t_2 t_3 t_{4'})$ is a $\Pi$--facial subwalk. This walk cannot be extended as $(t_0 t_1 t_2 t_3 t_{4'} t_5)$, otherwise it would intersect the $\Pi$--facial cycles $(t_0 t_1 t_2 x_2 t_0)$ and $(t_3 t_4 t_5 t_{4'} t_3)$ in two edges, contradicting the definition of polyhedral embedding. Hence the only possible way to extend this walk is $(t_{0'} t_1 t_2 t_3 t_{4'} x_{4'})$ and the claim follows. \\
Suppose $(t_1 t_0 t_5 t_{4'})$ is a $\Pi$--facial subwalk. Similarly, this walk cannot be extended as $(t_2 t_1 t_0 t_5 t_{4'} t_3)$, otherwise it would intersect the $\Pi$--facial cycles $(t_0 t_1 t_2 x_2 t_0)$ and $(t_3 t_4 t_5 t_{4'} t_3)$ in two edges, contradicting the definition of polyhedral embedding. Hence, the only possible way to extend this walk is $(t_{0'} t_1 t_0 t_5 t_{4'} x_{4'})$ and the claim follows. \\
In both cases, a path $\mathcal{P}=(t_1 t_{0'}\ldots x_{4'} t_{4'})$ completes either $\Pi$--facial subwalk in to a $\Pi$--facial cycle. 
$\boxminus$\\
\emph{Claim C1.6. The $\Pi$--facial subwalk $\mathcal{P}$ in the previous claim is $(t_1 t_{0'} x_{4'} t_{4'})$.}\\
We proceed by contradiction. Suppose that there is at least one vertex $t_p\in V(G)$ such that $\mathcal{P}=(t_1 t_{0'}t_p\ldots x_{4'} t_{4'})$. We will now prove that we can deduce further structure of $G^{e}$ from this assumption and arrive to a contradiction for either $(t_1 t_2 t_3 t_{4'})$ or $(t_1 t_0 t_5 t_{4'})$ being a $\Pi$--facial subwalk.$\boxminus$\\
\emph{Claim C1.6.1. The fork given by the set of vertices $\{t_p, t_{0'}, t_1, t_0, t_2\}$ and the set of edges $\{(t_p t_{0'}), (t_{0'} t_1), (t_1 t_0), (t_1 t_2), [t_p t_0], [t_p t_2]\}$ has to be contained in $G^e$.}\\
In this instance, by Proposition \ref{one}, either $(t_p t_{0'}t_1t_0)$ or $(t_p t_{0'} t_1 t_2)$ is a $\Pi$--facial subwalk. Therefore, if one of the scaffold edges $[t_p t_0]$ or $[t_p t_2]$ is not in $\mathcal{S}$ we can deduce whether $\mathcal{P}\cup (t_1 t_2 t_3 t_{4'})$ or $\mathcal{P}\cup (t_1 t_0 t_5 t_{4'})$ is a $\Pi$--facial cycle, by Proposition \ref{pathtwo}, and the disjunction can be solved, contradicting the hypothesis and the claim follows.$\boxminus$\\
\emph{Claim C1.6.2. The paths $(t_p x_{2'} x_2 t_0)$ and $(t_p x_{2'} x_2 t_2)$ are in $G$.}\\
As we have argued in other claims,  if there is to be a disjunction, by Lemma \ref{sinduda}, there are a pair of $3$--paths: one joining the vertices $\{t_p, t_0\}$ and another one joining the vertices $\{t_p, t_2\}$, internally disjoint to $(t_p t_{0'} t_1 t_0)$  and  $(t_p t_{0'} t_1 t_2)$, respectively. Looking at the degree of the vertices involved, it is easy to see that the only possibility is that $t_p$ is adjacent to $x_{2'}$ and the additional $3$--paths are $(t_p x_{2'} x_2 t_0)$ and $(t_p x_{2'} x_2 t_2)$. $\boxminus$\\
\emph{Claim C1.6.3. $\mathcal{P}$ does not continue through $x_{2'}$ after $t_p$.}\\
We will proceed by contradiction. Suppose $\mathcal{P}= (t_1 t_{0'}t_p x_{2'}\ldots x_{4'} t_{4'})$, then $\mathcal{P}$ cannot continue to the vertex $t_2$ after $t_1$, otherwise there is a $2$--path $(t_2 x_2 x_{2'})$ contradicting Proposition \ref{pathtwo}. Then, $\mathcal{P}$ continue to $t_0$ after $t_1$ and using Proposition \ref{pathtwo} we conclude that $\mathcal{P}\cup (t_1 t_0 t_5 t_{4'})=(t_1 t_{0'} t_p x_{2'}\ldots x_{4'}t_{4'}t_5 t_0 t_1)$ is a $\Pi$--facial cycle. This implies that undoubtedly $(t_1 t_2 t_3 t_4)$ is the $\Pi$--facial subwalk corresponding to the scaffold edge $[t_1 t_4]$. Ergo, the disjunction can be solved, contradicting our hypothesis. Hence, $x_{2'}$ is not in $\mathcal{P}$.$\boxminus$\\
Finally, we will now see what happens in each of the cases: either $\mathcal{P}\cup (t_1 t_2 t_3 t_{4'})$ or $\mathcal{P}\cup (t_1 t_0 t_5 t_{4'})$ is a $\Pi$--facial cycle. Inasmuch as both cases are symmetric, we will only prove the first one in detail. 
Suppose that $\mathcal{P}\cup (t_1 t_2 t_3 t_{4'})$ is a $\Pi$--facial cycle. This implies that the $3$--path $(t_1 t_2 t_3 t_{4'})$ corresponds to the edge $[t_1 t_{4'}]\in \mathcal{S} $ then the $3$--path that corresponds to $[t_1 t_4]$ is $(t_1 t_0 t_5 t_4)$, given that the embedding is polyhedral, we can conclude that $(t_{0'} t_1 t_0 t_5 t_4 x_4)$ is a $\Pi$--facial subwalk. \\
Analyzing the existing facial subwalks and using extensive use of the fact that the embedding is polyhedral is easy to see that $(x_4 t_4 t_3 t_2 x_2 x_{2'})$ is a $\Pi$--facial subwalk. Notice that this walk cannot continue through the vertex $t_p$ after $x_{2'}$, otherwise it would intersect the $\Pi$--facial cycle $\mathcal{P}\cup (t_1 t_2 t_3 t_{4'})$ in two edges. \\
Similarly, we can conclude that $(x_{4'} t_{4'} t_5 t_0 x_2 x_{2'})$ is a $\Pi$--facial subwalk, because it cannot continue through the vertex $t_p$ after $x_{2'}$, otherwise it would intersect the $\Pi$--facial cycle $\mathcal{P}\cup (t_1 t_2 t_3 t_{4'})$ in two edges. \\
    We have concluded that both $\Pi$--facial subwalks $(x_4 t_4 t_3 t_2 x_2 x_{2'})$ and $(x_{4'} t_{4'} t_5 t_0 x_2 x_{2'})$ continue after  $x_{2'}$ through the same vertex (not equal to $t_p$). This is, they will intersect in two edges, contradicting the definition of polyhedral embedding. 
As we said the proof of the second case is analagous, then in both cases we get to a contradiction, and we can conclude that there is no additional vertex in the path $\mathcal{P}= (t_1 t_{0'} x_{4'} t_{4'})$, and obviously the edge $(t_{0'}x_{4'})$ is in $G$. Thus, there is a butterfly, $B_2$, contained in $G$: it is given by the union of the two $6$--cycles $(t_0 t_1 t_2 t_3 t_4 t_5 t_0)\cup (t_{0'}t_1 t_2 t_3 t_{4'} x_{4'} t_{0'})$ and the set of scaffold edges $\{[t_1 t_4], [t_1 t_{4'}], [t_{0'}t_3]\}$. 
\\
\emph{This ends the proof of Case C1}$\boxminus$
\\
\emph{Case C2.} Assume that the second internally disjoint $3$--path between $t_{4'}$ and $x_2$ is $(t_{4'} t_5 t_4 x_2)$.
Notice that $(t_2 t_3 t_4 x_2 t_2)$ is a $\Pi$--facial cycle by Proposition \ref{4}. Using Claim 1 and Definition \ref{poly} we can deduce that $(t_{0'} t_1 t_2 t_3 t_{4'})$ is a $\Pi$--facial subwalk, i.e., the disjunction can be solved, contradicting our hypothesis. Hence, this case can never occur. 
\emph{This ends the proof of Case C2.}$\boxminus$\\
\emph{Case C3.} Assume that the second internally disjoint $3$--path between $t_{4'}$ and $x_2$ is $(t_{4'} x_{4'} x_{2'} x_2).$ Remember that Claim C5 asserts that $(t_2 t_3 t_{4'} x_{4'})$ is a $\Pi$--facial subwalk. So, there is a butterfly two contained in $G^e$, $B_2$, whose set of vertices is $\{t_0, t_1, t_2, t_3, t_{4'}, t_5, x_{4'}, x_2, x_{2'}\}$ and whose set of edges is given by the union of edges in the two $6$--cycles, $(t_0 t_1 t_2 t_3 t_{4'} t_5 t_0)\cup (t_2 t_3 t_{4'} x_{4'} x_{2'} x_2 t_2)$, and whose set of scaffold edges is $\{[t_{4'} t_1], [t_{4'} x_2], [t_2 x_{4'}]\}$. \emph{This ends the proof for Case C3.}$\boxminus$

\end{proof}
 
\begin{theorem}
Let  $G^e$ be such that $B_1 \subset G^e$ (labelled as in Definition \ref{butterfly}, Figure \ref{fig:butterfly}) and such that the disjunctions that arise from  the two fork subgraphs of $B_1$ induced by the vertices $\{t_1, t_2, t_3, t_4, t_{4'}\}$ and $\{t_0, t_{0'}, t_1, t_2, t_3\}$  cannot be solved, then $G$ is the Petersen's graph, $P$, and $G^{e}$ is the union of $P$ and the set of (single) scaffold edges $\mathcal{S}=E(K_{10})\setminus E(P)$.   
\end{theorem}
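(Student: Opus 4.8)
The plan is to show that the two unsolvable disjunctions force the cubic graph $G$ to close up, by adjoining a single new vertex together with a completely determined set of five new edges, into a $3$--regular graph on ten vertices of girth at least $5$; by the Moore bound such a graph is unique and equals the Petersen graph $P$, and a direct count of facial $3$--subwalks then identifies $\mathcal S$ as $E(K_{10})\setminus E(P)$. Throughout write $V(B_1)=\{t_0,t_{0'},t_1,t_2,t_3,t_4,t_{4'},t_5,t_{5'}\}$, so that inside $B_1$ only $t_1$ and $t_3$ already have degree three, while the seven vertices $t_2,t_0,t_5,t_4,t_{0'},t_{5'},t_{4'}$ each still miss exactly one incident edge of $G$; the fork $Y$ induced by $\{t_1,t_2,t_3,t_4,t_{4'}\}$ (centre $t_3$) and the fork $Y'$ induced by $\{t_3,t_2,t_1,t_0,t_{0'}\}$ (centre $t_1$) are the two forks named in the statement, and by hypothesis neither disjunction can be solved.

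I would first locate the missing neighbours, by the coincidence analysis used throughout the proof of Lemma~\ref{containedbutterfly}. Let $z$ be the remaining neighbour of $t_2$. By Propositions~\ref{edge}, \ref{pathtwo}, \ref{one}, \ref{3} and \ref{4}, if $z$ coincided with any of the eight other vertices of $B_1$ some triangle or quadrilateral would be forced to be a facial cycle, which in turn would force one of the two facial $3$--subwalks of $Y$ or of $Y'$ and hence solve a disjunction -- contrary to hypothesis; so $z$ is a new vertex. Then, just as in Cases A, B and C in the proof of Lemma~\ref{containedbutterfly}, the unsolvability of the disjunction at $Y'$ forces a second internally disjoint $3$--path for each of $[t_0t_3]$ and $[t_{0'}t_3]$; tracing where these $3$--paths may run, and discarding at every branch the options whose consequences (via the propositions just cited and Proposition~\ref{pizzageneralization}) would force a facial $3$--subwalk of $Y$ or $Y'$, leaves a \emph{single admissible completion}: $z$ is adjacent to $t_5$ and to $t_{5'}$, and the remaining four pendant half-edges pair up as the edges $t_0t_{4'}$ and $t_4t_{0'}$. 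Now $G$ is $3$--regular on the ten vertices $V(B_1)\cup\{z\}$, and the very checks just made show no triangle or quadrilateral was created, so $G$ has girth at least $5$.

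It only remains to recognise $G$ and to compute $\mathcal S$. A $3$--regular graph of girth at least $5$ has at least $10$ vertices, with equality only for the Petersen graph, so $G=P$. Since $P$ is $3$--connected and $G^e=G^e(\Pi)$ for some polyhedral embedding $\Pi$, Proposition~\ref{layout} gives face-width at least $3$; moreover $[t_0t_3],[t_{0'}t_3]\in\mathcal S$ together with the unsolvability of the disjunction at $Y'$ prevent the $6$--cycles $C_1=(t_0t_1t_2t_3t_4t_5)$ and $C_2=(t_{0'}t_1t_2t_3t_{4'}t_{5'})$ from being facial (a facial hexagon through $t_1t_2t_3$ can only be $C_1$ or $C_2$, and each would make $[t_0t_3]$ or $[t_{0'}t_3]$ a double scaffold edge, hence by Proposition~\ref{one} would force the path for that edge and solve a disjunction), so the face of $\Pi$ through $t_1t_2t_3$ is a pentagon; propagating this around $P$ shows that $\Pi$ is the projective--planar embedding of $P$, all of whose six faces are pentagons (the two possible face sets are interchanged by the automorphism of $P$ exchanging primed and unprimed vertices of $B_1$, which is exactly why the disjunctions cannot be solved). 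For each pentagonal face $(v_0v_1v_2v_3v_4)$ the five facial $3$--subwalks $(v_iv_{i+1}v_{i+2}v_{i+3})$ produce the five scaffold edges joining the pairs of vertices of the face at cyclic distance $2$, i.e.\ the five diagonals of that pentagon; since any two non-adjacent vertices of $P$ have a unique common neighbour, the $2$--path between them lies on exactly one face, so every non-edge of $P$ is a scaffold edge, and since $P$ has girth $5$ there are no $3$-- or $4$--cycles and hence (by Proposition~\ref{pizzageneralization}) no double scaffold edges. Therefore $\mathcal S=E(K_{10})\setminus E(P)$ and $G^e=P\cup\bigl(E(K_{10})\setminus E(P)\bigr)$.

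The heart of the proof, and the step I expect to be the main obstacle, is the second paragraph: the finite but delicate case analysis showing that the seven pendant half-edges of $B_1$ admit exactly one completion. It is the analogue of, and comparable in length to, the proof of Lemma~\ref{containedbutterfly}, and it is where both unsolvability hypotheses are used up; once it is carried out, the identification of $G$ with $P$ and the evaluation of $\mathcal S$ are short.
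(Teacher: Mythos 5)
Your outline reaches the same final configuration as the paper (the edges $t_0t_{4'}$ and $t_{0'}t_4$, plus one new vertex adjacent to $t_2$, $t_5$, $t_{5'}$, giving the Petersen graph), but the decisive step is only asserted, not proved. The sentence ``tracing where these $3$--paths may run \dots leaves a \emph{single admissible completion}'' is exactly the content of the paper's Claims 1--7: one must first show that exactly one of $(t_0t_1t_2t_3t_{4'})$, $(t_{0'}t_1t_2t_3t_4)$ is a facial subwalk, then use Proposition \ref{pizzageneralization} on the resulting pairs of facial subwalks to force $(t_0t_{4'})$, $(t_{0'}t_4)\in E(G)$ (ruling out the alternative that the relevant unions are facial cycles, which would meet other faces improperly), and only then argue via the scaffold edges $[t_0t_2]$, $[t_2t_{0'}]$, $[t_2t_4]$, $[t_2t_{4'}]$ that the missing neighbour of $t_2$ must also be the missing neighbour of $t_5$ and of $t_{5'}$. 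None of this is routine: a priori the second $3$--paths for $[t_0t_3]$ and $[t_{0'}t_3]$ could exit $B_1$ through further new vertices instead of pairing up the pendant half-edges internally, and excluding that is the bulk of the theorem. You acknowledge this yourself (``the step I expect to be the main obstacle''), so as it stands the proposal is a plan, not a proof, and the unsolvability hypotheses --- which you correctly identify as the engine --- are never actually deployed.

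A secondary gap is the computation of $\mathcal{S}$. The paper never needs to identify the embedding: it shows directly (Claims 6 and 8) that if any candidate scaffold edge were absent, Propositions \ref{one} and \ref{pathtwo} would let one solve a disjunction, so all $30$ non-edges of $P$ lie in $\mathcal{S}$. Your route instead pins down $\Pi$ as the all-pentagon projective-planar embedding and reads off $\mathcal{S}$ from its faces; that is a genuinely different (and pleasant) way to finish, but the key assertion ``propagating this around $P$ shows that $\Pi$ is the projective-planar embedding'' is again only sketched --- you must exclude faces of length $\geq 6$ everywhere, not just the two hexagons $C_1$, $C_2$ through $t_1t_2t_3$, before the face count $6\times 5=30$ and the girth-$5$ argument give single scaffold edges on every non-edge. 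With both of these case analyses written out your argument would be correct and essentially parallel to the paper's, with the Moore-bound identification of $P$ and the face-counting evaluation of $\mathcal{S}$ as modest variants; without them the theorem is not established.
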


\begin{proof}
As $B_1 \subset G^e$ (labelled as in Definition \ref{butterfly}) then $G$ contains the two $6$-cycles $(t_0 t_1 t_2 t_3 t_4 t_5 t_0)$ and $(t_{0'} t_1 t_2 t_3 t_{4'} t_{5'} t_{0'})$. Also, $[t_0 t_3]$, $[t_{0'} t_3]$, $[t_1 t_4]$, $[t_1 t_{4'}] \in \mathcal{S}$. 

\emph{Claim 1.}
\emph{Either $(t_0 t_1 t_2 t_3 t_{4'})$ or $(t_{0'} t_1 t_2 t_3 t_4)$ (but not both) is a $\Pi$-facial subwalk.}

Suppose, without loss of generality, that $(t_0 t_1 t_2 t_3 )$ is a $\Pi$-facial subwalk. If $(t_1 t_2 t_3 t_4)$ is the $3$--path corresponding to $[t_1 t_4]$, by Definition \ref{poly}, $(t_0 t_1 t_2 t_3 t_4)$ is a $\Pi$-facial subwalk and by Proposition \ref{pathtwo}, $(t_0 t_1 t_2 t_3 t_4 t_5 t_0)$ is a $\Pi$-facial cycle. Furthermore, by Proposition \ref{one} and Corollary \ref{otro} imply that $(t_1 t_{0'} t_{5'} t_{4'})$ is the $\Pi$-facial subwalk corresponding to $[t_1 t_{4'}]$ and $(t_{0'} t_{5'} t_{4'} t_3)$ is the $\Pi$-facial subwalk corresponding to $[t_{0'}t_3]$. It follows that, by Definition \ref{poly}, $(t_1 t_{0'} t_{5'} t_{4'} t_3)$ is a $\Pi$-facial subwalk; and by Proposition \ref{pathtwo}, $(t_{0'} t_1 t_2 t_3 t_{4'} t_{5'} t_{0'})$ is a $\Pi$-facial cycle. But this last statement contradicts Definition \ref{poly}, as two different facial walks would intersect in two edges. Hence $(t_1 t_2 t_3 t_4)$ is not the $3$--path corresponding to $[t_1 t_4],$ so $(t_0 t_1 t_2 t_3 t_{4'})$ is a $\Pi$-facial subwalk and then by Proposition \ref{one}, the statement follows.

The case when we suppose $(t_{0'} t_1 t_2 t_3)$ as a $\Pi$-facial subwalk is resolved similarly.
$\boxminus$

\emph{Claim 2. Neither $(t_5 t_0 t_1 t_2 t_3 t_{4'})$ nor $(t_0 t_1 t_2 t_3 t_{4'} t_{5'})$ can be a $\Pi$-facial subwalk.}

Suppose $(t_5 t_0 t_1 t_2 t_3 t_{4'})$ is a $\Pi$-facial subwalk, by Proposition \ref{pathtwo}, $(t_0 t_1 t_2 t_3 t_{4}  t_5 t_0)$ is a $\Pi$-facial cycle, and has to be the same $\Pi$-facial walk that contains $(t_5 t_0 t_1 t_2 t_3 t_{4'})$, this contradicts the definition of polyhedral embedding (Definition \ref{poly}). 

The case for $(t_0 t_1 t_2 t_3 t_{4'} t_{5'})$ follows similarly.
$\boxminus$

\emph{Claim 3. Neither $(t_{5'} t_{0'} t_1 t_2 t_3 t_4)$ nor $(t_{0'} t_1 t_2 t_3 t_4 t_5)$ can be a $\Pi$-facial subwalk.}

The proof of this statement follows in the same way as that of Claim 2.
$\boxminus$\\

\emph{Claim 4. Either $(t_0 t_1 t_2 t_3 t_{4'})$ is a $\Pi$--facial subwalk, implying that $(t_{0'} t_1 t_0 t_5 t_4)$ and $(t_{0'} t_{5'} t_{4'} t_3 t_4)$ are $\Pi$--facial subwalks, or $(t_{0'} t_1 t_2 t_3 t_4)$ is a $\Pi$--facial subwalk, implying that $(t_0 t_1 t_{0'} t_{5'} t_{4'})$ and $(t_0 t_5 t_4 t_3 t_{4'})$ are $\Pi$--facial subwalks. }

By Claim 1, we know that $(t_0 t_1 t_2 t_3 t_{4'})$ and $(t_{0'} t_1 t_2 t_3 t_4)$ cannot be $\Pi$--facial subwalks simultaneously.

Assume that  $(t_0 t_1 t_2 t_3 t_{4'})$ is a $\Pi$-facial subwalk, then by Corollary \ref{otro}, $(t_1 t_0 t_5 t_4)$ is the $\Pi$-facial subwalk of $[t_1 t_4]$. Since $t_1$ has degree three, necessarily $(t_2 t_1 t_0 t_5 t_4)$ or $(t_{0'} t_1 t_0 t_5 t_4)$ is a $\Pi$-facial subwalk. However, if $(t_2 t_1 t_0 t_5 t_4)$ is a $\Pi$-facial subwalk then it intersects the $\Pi$-facial subwalk $(t_0 t_1 t_2 t_3 t_{4'})$ in two edges, contradicting Definition \ref{poly}. Hence $(t_{0'} t_1 t_0 t_5 t_4)$ is a $\Pi$-facial subwalk. 
Since $(t_0 t_1 t_2 t_3)$ is a $\Pi$--facial subwalk, by Proposition \ref{one}, $(t_{0'}t_1 t_2 t_3)$ is not a $\Pi$--facial subwalk. Thus, by Corollary \ref{otro}, the $3-$path corresponding to $[t_{0'} t_3]$ is $(t_{0'} t_{5'} t_{4'} t_3)$. Since $t_3$ has degree three, necessarily $(t_{0'} t_{5'} t_{4'} t_3 t_2)$ or $(t_{0'} t_{5'} t_{4'} t_3 t_4)$ is a $\Pi$-facial subwalk. However, if $(t_{0'} t_{5'} t_{4'} t_3 t_2)$ was a $\Pi$-facial subwalk then it would intersect the $\Pi$-facial subwalk $(t_0 t_1 t_2 t_3 t_{4'})$ in two edges, contradicting Definition \ref{poly}. Hence $(t_{0'} t_{5'} t_{4'} t_3 t_4)$ is a $\Pi$-facial subwalk.

The other case follows similarly.
$\boxminus$\\

\emph{Claim 5. $(t_0 t_1 t_2 t_3 t_{4'}t_0)$ and $(t_{0'} t_1 t_2 t_3 t_4 t_{0'})$ are $5$--cycles of $G$. (Not necessarily facial cycles of the embedding.)}

By Claim 1, we know that either $(t_0 t_1 t_2 t_3 t_{4'})$ or $(t_{0'} t_1 t_2 t_3 t_4)$ is a $\Pi$--facial subwalk.
Assume that $(t_0 t_1 t_2 t_3 t_{4'})$ is a $\Pi$--facial subwalk. Then, by Claim 4,  $(t_{0'} t_1 t_0 t_5 t_4)$ and $(t_{0'} t_{5'} t_{4'} t_3 t_4)$ are $\Pi$-facial subwalks and observe that they intersect in their start and end vertices. By Proposition \ref{pizzageneralization}, either $(t_{0'} t_{5'} t_{4'} t_3 t_4 t_5 t_0 t_1 t_{0'})$ is a $\Pi$-facial cycle, or the edge $(t_{0'}t_4)$ is in $E(G)$. However, $(t_{0'} t_{5'} t_{4'} t_3 t_4 t_5 t_0 t_1 t_{0'})$ can not be a $\Pi$-facial cycle, since it would intersect the $\Pi$--facial subwalk $(t_0 t_1 t_2 t_3 t_{4'})$ in two edges and this would contradict the assumption that the embedding is polyhedral. Then, if $(t_0 t_1 t_2 t_3 t_{4'})$ is a $\Pi$-facial subwalk, the edge $(t_{0'} t_4)\in E(G)$, and  $(t_{0'}t_1 t_2 t_3 t_{4} t_{0'})$ is a $5$ --cycle of $G$.

If we assume that $(t_{0'} t_1 t_2 t_3 t_4)$ is a $\Pi$--facial subwalk, the proof follows in a similar way and we can deduce that $(t_0 t_{4'})\in E(G)$ and $(t_0 t_1 t_2 t_3 t_{4'})$ is a $5$--cycle of $G$.

Notice that, as a consequence of the previous two arguments, it can not happen that both of $(t_{0'} t_4)$ and  $(t_0 t_{4'})$ are not in $E(G)$ simultaneously. Otherwise, this would contradict Claim 1. We will now argue that both $(t_{0'} t_4)$ and $(t_0 t_{4'})$ are in $E(G)$.

Suppose that $(t_{0} t_{4'})\not \in E(G)$, then $(t_{0'} t_{4})\in E(G)$. If $(t_{0'} t_1 t_2 t_3 t_4)$ is a $\Pi$-facial subwalk then, by Claim 4, $(t_0 t_1 t_{0'} t_{5'} t_{4'})$ and $(t_0 t_5 t_4 t_3 t_{4'})$ are $\Pi$--facial subwalks. It follows that, by Proposition \ref{pizzageneralization}, $(t_0 t_1 t_{0'} t_{5'} t_{4'} t_3 t_4 t_5 t_0)$ is a $\Pi$--facial cycle, but  it intersects the $\Pi$--facial subwalk $(t_{0'} t_1 t_2 t_3 t_4)$ in two edges, which contradicts the definition of polyhedral embedding. Hence $(t_{0'} t_1 t_2 t_3 t_4)$ is not a $\Pi$--facial subwalk, and, by Claim 1, $(t_{0} t_1 t_2 t_3 t_{4'})$ has to be a $\Pi$--facial subwalk, implying that the disjunction can be solved. Therefore $(t_{0} t_{4'}) \in E(G)$.

The case where we begin by assuming that  $(t_{0'} t_4)\not \in E(G)$ follows analogously, thus $(t_{0'} t_4)\in E(G)$.

These arguments prove that $(t_0 t_{4'})$ and $(t_{0'} t_4)$ are in $E(G)$, and $(t_0 t_1 t_2 t_3 t_{4'} t_0)$ and $(t_{0'} t_1 t_2 t_3 t_4 t_{0'})$ are $5$--cycles of $G$.
$\boxminus$\\

\emph{Claim 6. The scaffold edges $[t_0 t_2]$, $[t_1 t_3]$, $[t_2 t_{0'}]$, $[t_2 t_{4'}]$, $[t_2 t_4]$ are in $\mathcal{S}$.}

By Claim 1 and 5, and Proposition \ref{edge} either $(t_0 t_1 t_2 t_3 t_{4'} t_0)$ or $(t_{0'} t_1 t_2 t_3 t_4 t_{0'})$ is a $\Pi$-facial cycle. So, if any of edges $[t_0 t_2]$, $[t_1 t_3]$, $[t_2 t_{0'}]$, $[t_2 t_{4'}]$, $[t_2 t_4]$ were not in $\mathcal{S}$, we could easily discard the possibility that one of the two cycles is a $\Pi$-facial cycle, thus, we could solve the disjunction and the claim follows.
$\boxminus$\\

\emph{Claim 7. There is a new vertex $t_x$ adjacent to $t_2, t_5$ and $t_{5'}$.}

By Claim 6, the edges $[t_0 t_2]$, $[t_2 t_{0'}]$, $[t_2 t_{4'}]$, $[t_2 t_4]$ are in $\mathcal{S}$. If for any of this edges there was only one $3-$path that could correspond to it, then, by Proposition \ref{pathtwo}, we could know whether $(t_0 t_1 t_2 t_3 t_{4'} t_0)$ or $(t_{0'} t_1 t_2 t_3 t_4 t_{0'})$ is a $\Pi$-facial cycle, thus we could solve the disjunction. Hence there have to be additional $3$--paths that can correspond to $[t_0 t_2]$, $[t_2 t_{0'}]$, $[t_2 t_{4'}]$, $[t_2 t_4].$

In order to find a second path between $t_2$ and $t_{4'}$ of length three, which is internally disjoint to $(t_2 t_3 t_{4'})$ and $(t_2 t_1 t_0 t_{4'})$, let $t_x$ be the remaining vertex adjacent to $t_2$. Observe that $t_x \neq t_5$, otherwise by Proposition \ref{4} $(t_0 t_1 t_2 t_5 t_0)$ and $(t_2 t_3 t_4 t_5 t_2)$ are $\Pi$--facial cycles and using Claim 1, we can deduce easily if $(t_0 t_1 t_2 t_3 t_{4'})$ or $(t_0 t_1 t_2 t_3 t_{4'})$ is a $\Pi$--facial subwalk, contradicting our hypothesis. Analogously $t_x \neq t_{5'}$. Thus $t_x$ is a new vertex. Notice that $[t_x t_0], [t_x t_4], [t_x t_{0'}], [t_x t_{4'}]$ are in $\mathcal{S}$, otherwise, by Proposition \ref{one}, we can deduce the sub-paths in the $\Pi$-facial subwalks that correspond to $[t_1 t_4]$ and $[t_1 t_{4'}]$, which implies that we can solve the disjunction. Then, the second $3$--path between $t_2$ and $t_{4'}$ contains $t_x$ and $t_{5'}$. This implies that $(t_x t_{5'})$ is an edge of $G$.

Similarly, for $[t_0 t_2]$ there are already two disjoint paths $(t_2 t_1 t_0)$ and $(t_2 t_3 t_{4'} t_0)$ of length two and three respectively. Then, the third path necessarily contains $t_x$ and $t_5$, which implies (as before) that $(t_x t_5)$ is an edge of $G$.
$\boxminus$

Note that we now know the complete underlying cubic graph $G$, namely $G$ is the Petersen's graph.

\emph{Claim 8. $[t_{0} t_{0'}]$, $[t_{0} t_{5'}]$, $[t_{0} t_{4}]$, $[t_{1} t_{5}]$, $[t_{1} x]$, $[t_{1} t_{5'}]$, $[t_{2} t_{5'}]$, $[t_{2} t_{5}]$, $[t_{3} x]$, $[t_{3} t_{5'}]$, $[t_{3} t_{5}]$, $[t_{4'} t_{5}]$, $[t_{4} t_{4'}]$, $[t_{0} t_{4'}]$, $[t_{5} t_{0'}]$, $[t_{4} t_{5'}]$ and $[t_{5} t_{5'}]$ are in $\mathcal{S}$. }

We will use identical arguments to those in Claim 6. For example if $[t_{0} t_{0'}]$ is not in $\mathcal{S}$, then any $3$--path whith start $t_0$ and end $t_{0'}$ can't be a facial subwalk. This implies that by process of elimination and using Proposition \ref{one} and Proposition \ref{pathtwo}, we can deduce the $\Pi$--facial cycles, and the disjunction can be solved. $\boxminus$ 

This finalizes the proof. Therefore, $G$ is Petersen Graph and $\mathcal{S}= E(K_{10}\setminus E(P))$. 
\end{proof}

\begin{theorem}
Let $G^e$ be such that $B_2\subset G^e$ (labelled as Definition \ref{butterfly}, Figure \ref{fig:butterfly}) and such that the disjunction that arises from the fork subgraph of $B_2$ induced by the vertices $\{t_1, t_2, t_3, t_4, t_{4'}\}$ cannot be solved, then $G$ is the Franklin graph, $F$, and $G^e$ is the union of $F$ and the set $\mathcal{S}$ given by \emph{simple} scaffold edges
$[t_0 t_5], [t_5 x_5], [x_5 x_0], [x_0 t_0], [t_1 t_2], [t_2 x_2], [x_2 t_{0'}], [t_{0'} t_1],[t_3 t_{4'}], [t_{4'} t_{5'}], [t_{5'} t_4], [t_4 t_3]$, and the \emph{double} scaffold edges
$[[t_0 t_3]], [[x_0 t_2]], [[t_1 t_{4'}]], [[t_{0'}t_5]], [[t_4 x_2]], [[x_5 t_{5'}]], [[t_0 t_{5'}]], [[t_1 t_4]],$ $[[t_2 t_5]], [[t_3 x_5]], [[t_{4'} x_2]], [[x_0 t_{0'}]]$.
\end{theorem}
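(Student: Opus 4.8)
The plan is to mimic the structure of the previous theorem --- the one producing Petersen's graph --- with $5$-cycles replaced by $4$-cycles. Starting from the assumed copy of $B_2$ in $G^e$, I would run a chain of claims that successively force the adjacencies of $G$, invoking at each step the standing hypothesis that the disjunction attached to the fork on $\{t_1,t_2,t_3,t_4,t_{4'}\}$ cannot be solved: any partially reconstructed configuration that would let us decide whether $(t_1t_2t_3t_4)$ is the $3$-path of $[t_1t_4]$ or $(t_1t_2t_3t_{4'})$ is the $3$-path of $[t_1t_{4'}]$ is contradictory and gets discarded. The only genuinely new feature relative to the Petersen case is that $B_2$ carries the scaffold edge $[t_0t_3]$ but not $[t_{0'}t_3]$ (if $[t_{0'}t_3]$ were also present, $B_1\subset G^e$ and the previous theorem would take over); the whole argument is thus asymmetric in $t_0$ versus $t_{0'}$, and it is precisely this asymmetry that forces the Franklin graph $F$ rather than $P$.

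Writing $C_1=(t_0t_1t_2t_3t_4t_5t_0)$, $C_2=(t_{0'}t_1t_2t_3t_{4'}t_{5'}t_{0'})$ for the two $6$-cycles of $B_2$, the first part of the proof records the forced facial data near the fork. By Proposition \ref{one}, exactly one of $(t_1t_2t_3t_4)$, $(t_1t_2t_3t_{4'})$ is a $\Pi$-facial subwalk; since $[t_0t_3]\in\mathcal S$, Corollary \ref{otro} forces one of $(t_0t_1t_2t_3)$, $(t_0t_5t_4t_3)$ to be a $\Pi$-facial subwalk; and Lemma \ref{sinduda} forces each of $[t_1t_4]$, $[t_1t_{4'}]$, $[t_0t_3]$ to admit a second internally disjoint $3$-path. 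I would then let $x_0,x_2,x_5$ denote the third neighbours of $t_0,t_2,t_5$ and show, exactly as in Claims 1 and 3 of Lemma \ref{containedbutterfly}, that each is a new vertex, discarding every coincidence by producing a short cycle (via Proposition \ref{3} or Proposition \ref{4}) that would orient the fork. The second and longest part is a case analysis on how these new vertices and the remaining half-edges close up: in each branch one either completes a $3$- or $4$-cycle, which by Propositions \ref{3} and \ref{4} is a $\Pi$-facial cycle and in all but one branch orients the fork, or one is forced by Proposition \ref{pizzageneralization} to complete an internally disjoint pair of $\Pi$-facial subwalks into a $\Pi$-facial $6$-cycle, which again in all but one branch orients the fork. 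The surviving branch is the one in which the edge $(t_4t_{5'})$ is forced into $E(G)$ --- the analogue of the edges $(t_0t_{4'})$, $(t_{0'}t_4)$ forced in the Petersen argument --- so that the three $4$-cycles $(t_0t_5x_5x_0t_0)$, $(t_1t_2x_2t_{0'}t_1)$, $(t_3t_4t_{5'}t_{4'}t_3)$ all appear; imposing $3$-regularity on the twelve vertices $\{t_0,t_1,t_2,t_3,t_4,t_5,t_{0'},t_{4'},t_{5'},x_0,x_2,x_5\}$ then leaves no freedom, and one checks directly that the resulting cubic graph is $F$.

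Once $G=F$ is identified, determining $\mathcal S$ is routine. By Proposition \ref{4} the three $4$-cycles above are $\Pi$-facial cycles, and deleting any one of their edges exhibits the $3$-path witnessing that this same edge also lies in $\mathcal S$; this accounts for exactly the twelve simple scaffold edges of the statement. For each of the twelve double scaffold edges I would exhibit, from the facial data collected in the first two parts, the two internally disjoint $3$-paths between its endpoints that are $\Pi$-facial subwalks --- equivalently, the $\Pi$-facial $6$-cycle of which that pair is a chord, as explained after the definition of the extended graph and in Proposition \ref{pizzageneralization}. One then checks that $\mathcal S$ contains nothing more: any other $3$-path of $F$, were it a $\Pi$-facial subwalk, would orient the fork, contradicting the hypothesis. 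Since the resulting scaffold-edge list is invariant under the automorphism of $F$ that exchanges the two prongs of the fork, it is independent of which of the two possible orientations of the disjunction is actually realized, so $G^e$ is the single graph named in the statement.

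The hard part will be the middle case analysis, which must rule out every adjacency pattern on $x_0,x_2,x_5$ other than the Franklin one. It is routine in spirit --- each case closes with a short cycle or a $6$-cycle that decides the disjunction --- but long and delicate, and the bookkeeping must at all times respect the $t_0$-versus-$t_{0'}$ asymmetry coming from $[t_0t_3]\in\mathcal S$ but $[t_{0'}t_3]\notin\mathcal S$, since relaxing it reintroduces the Petersen branch.
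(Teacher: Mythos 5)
Your overall strategy coincides with the paper's: force the double scaffold edges $[[t_0t_3]]$, $[[t_1t_4]]$, $[[t_2t_5]]$, introduce the third neighbours $x_0,x_2,x_5$, show they are new vertices, pin down the remaining adjacencies by a long case analysis driven by the unsolvability of the disjunction, and finally read off $\mathcal{S}$ from the facial data. However, one step as you state it would fail: after the two $6$-cycles and the three $4$-cycles $(t_0t_5x_5x_0t_0)$, $(t_1t_2x_2t_{0'}t_1)$, $(t_3t_4t_{5'}t_{4'}t_3)$ are in place, you have only $16$ of the $18$ edges, and the four vertices $x_0,x_2,x_5,t_{4'}$ still have degree $2$; $3$-regularity alone admits two perfect matchings completing the graph ($\{x_0t_{4'},\,x_2x_5\}$ or $\{x_0x_2,\,x_5t_{4'}\}$), so it does \emph{not} ``leave no freedom.'' The paper closes this by a different mechanism, and earlier in the argument: since the disjunction is unsolvable, each of the double edges $[[t_0t_3]]$ and $[[t_2t_5]]$ must admit a \emph{third} internally disjoint $3$-path (else Proposition \ref{pizzageneralization} would make $(t_0t_1t_2t_3t_4t_5t_0)$ facial and orient the fork), and a degree count forces these third paths to be $(t_0x_0t_{4'}t_3)$ and $(t_2x_2x_5t_5)$, i.e.\ the edges $(x_0t_{4'})$ and $(x_2x_5)$. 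You need this argument; regularity will not substitute for it.

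A second, smaller point of precision: the hypothesis $[t_{0'}t_3]\notin\mathcal{S}$ gives you more than ``one of $(t_0t_1t_2t_3)$, $(t_0t_5t_4t_3)$ is facial.'' By Proposition \ref{one} it gives outright that $(t_0t_1t_2t_3)$ \emph{is} a $\Pi$-facial subwalk, and this single pinned-down facial subwalk is the anchor from which the paper, via Corollary \ref{pelito}, determines which two of the three $3$-paths of each double scaffold edge are facial (its Claims 8, 10, 11, 13). Your final stage --- exhibiting, for each of the twelve double edges, the facial $6$-cycle of which it is a chord --- depends on having this anchor; without stating it you cannot carry out the classification of $\mathcal{S}$ into simple and double edges that the theorem asserts. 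With these two repairs your outline matches the published proof.
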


\begin{proof}
Observe that in $B_2$, $(t_0  t_1 t_2 t_3 t_4 t_5 t_0)$ and $(t_{0'} t_1 t_2 t_3 t_{4'} t_{5'} t_{0'})$ are two $6$-cycles such that $[t_1 t_4]$, $[t_1 t_{4'}]$, $[t_0 t_3]\in G^e$ and $[t_{0'} t_3]\not\in G^e$, which implies by Proposition \ref{one}, that $(t_0 t_1 t_2 t_3)$ is $\Pi$-facial subwalk of $[t_0 t_3]$. \\

\emph{Claim 1. $[[t_0 t_3]]$, $[[t_1 t_4]]$ and $[[t_2 t_5]]$ are double scaffold edges.}

If one of $[[t_0 t_3]]$, $[[t_1 t_4]]$ or $[[t_2 t_5]]$ is not a double scaffold edge, then $(t_1 t_2 t_3 t_4)$ cannot be a $\Pi$-facial subwalk, otherwise by Definition \ref{poly} and Proposition \ref{pathtwo}, this would imply that $(t_0 t_1 t_2 t_3 t_4 t_5 t_0)$ is a $\Pi$-facial cycle, but this is impossible as we started by assuming that at least one of  $[[t_0 t_3]]$, $[[t_1 t_4]]$ or $[[t_2 t_5]]$ is not a double scaffold edge.

It follows by Proposition \ref{one} that $(t_1 t_2 t_3 t_{4'})$ is the $\Pi$-facial subwalk that corresponds to the scaffold edge $[t_1 t_{4'}]$, and then, by Proposition \ref{one} and Corollary \ref{otro}, the $\Pi$-facial subwalk corresponding to the edge $[t_1 t_4]$ should be $(t_1 t_0 t_{5} t_4)$, which implies that the disjunction could be solved, contradicting our hypothesis. Hence the claim holds. $\boxminus$
\\
\\
\emph{Claim 2. For each pair of vertices $\{t_0,t_3\}$, $\{t_1,t_4\}$ and $\{t_2,t_5\}$, there is a third $3$-path between them whose vertices are disjoint to the cycle $(t_0 t_1 t_2 t_3 t_4 t_5 t_0)$.}

Suppose that for at least one of the pairs of vertices mentioned above, there are only two $3$--paths, by Proposition \ref{pizzageneralization}, $(t_0 t_1 t_2 t_3 t_4 t_5 t_0)$ is $\Pi$-facial cycle and then $(t_1 t_2 t_3 t_4)$ is the $\Pi$-facial subwalk corresponding to $[t_1 t_4]$, i.e., we can solve the disjunction, contradicting our hypothesis. Hence the claim holds. $\boxminus$
\\
\\
Since $G$ is a cubic graph, let $x_i$ be the remaining vertex adjacent to $t_i$ for $i\in\{0, 2, 5\}$, then:\\

\emph{Claim 3. The vertex $x_0$ is different to the vertices $t_i$ for all $i\in \{0, 1, 2, 3, 4, 5, 0', 4', 5' \}$ and adjacent to $t_{4'}$.}
\\

First, in order to prove that all vertices are different we proceed by contradiction:
\begin{enumerate}
\item ($x_0 \neq t_{0'}$.) If $x_0 = t_{0'}$, then by Proposition \ref{3}, $(t_0 t_{0'} t_1 t_0)$ is a $\Pi$-facial cycle, this implies, by Proposition \ref{one}, that $(t_5 t_0 t_1 t_2)$ is a $\Pi$-facial subwalk. Since $(t_0 t_1 t_2 t_3)$ is $\Pi$-facial subwalk, by Definition \ref{poly}, necessarily $(t_5 t_0 t_1 t_2 t_3 )$ is a $\Pi$-facial subwalk, and using Proposition \ref{pathtwo}, $(t_0 t_1 t_2 t_3 t_4 t_5 t_0)$ is a $\Pi$-facial cycle. The latter implies that $(t_1 t_2 t_3 t_4)$ is the $\Pi$-facial subwalk corresponding to $[t_1 t_4]$ and thus we can solve the disjunction, contradicting our hypothesis. Hence  $x_0 \neq t_{0'}$. 

\item ($x_0 \neq t_2$.) If $x_0 = t_2$, by Proposition \ref{3}, $(t_0 t_1 t_2 t_0)$ is a $\Pi$--facial cycle, but this contradicts, by Definition \ref{poly}, the fact that $(t_0 t_1 t_2 t_3)$ is a $\Pi$--facial subwalk. Hence $x_0 \neq t_2$.

\item ($x_0 \neq t_{4}$.) If $x_0 = t_4$, since $(t_0 t_1 t_2 t_3)$ is a $\Pi$--facial subwalk, Proposition \ref{pathtwo} implies that $(t_0 t_1 t_2 t_3 t_4 t_0)$ is a $\Pi$--facial cycle, and $(t_1 t_2 t_3 t_4)$ is the $\Pi$--facial subwalk corresponding to $[t_1 t_4]$. Thus, the disjunction can be solved, contradicting our hypothesis. Hence,  $x_0 \neq t_{4}$. 

\item ($x_0 \neq t_3$.) If $x_0 = t_3$, then $t_3$ would have degree four and $G$ would not be a cubic graph.

\item ($x_0 \neq t_{4'}$.) If $x_0 = t_{4'}$, since $(t_0 t_1 t_2 t_3)$ is a $\Pi$-facial subwalk, Definition \ref{poly} and Proposition \ref{pathtwo} imply that $(t_0 t_1 t_2 t_3 t_{4'} t_0)$ is a $\Pi$-facial cycle and that $(t_1 t_2 t_3 t_{4'})$ is the $\Pi$-facial subwalk corresponding to $[t_1 t_{4'}]$. Thus we can solve the disjunction, contradicting our hypothesis. Hence  $x_0 \neq t_{4'}$. 

\item ($x_0 \neq t_{5'}$.) If $x_0 = t_{5'}$, then by Proposition \ref{4}, $(t_0 t_1 t_{0'} t_{5'} t_0)$ is a $\Pi$-facial cycle; the latter and Proposition \ref{one} imply that $(t_2 t_1 t_0 t_5)$ is a $\Pi$-facial subwalk, and by Definition \ref{poly}, since $(t_0 t_1 t_2 t_3 )$ is a $\Pi$-facial subwalk, $(t_5 t_0 t_1 t_2 t_3)$ is a $\Pi$-facial subwalk. By Proposition \ref{pathtwo}, it follows that $(t_0 t_1 t_2 t_3 t_4 t_5 t_0)$ is a $\Pi$-facial cycle and $(t_1 t_2 t_3 t_4)$ is the $\Pi$-facial subwalk that corresponds to $[t_1 t_4]$. The previous statements imply that we can solve the disjunction, contradicting our hypothesis. Hence  $x_0 \neq t_{5'}$. 
\end{enumerate}

Finally, Claim 2 implies $x_0$ and $t_{4'}$ are adjacent.$\boxminus$

\emph{Claim 4. The vertex $x_2$ is different to the vertices $t_i$ for all $i\in\{0, 1, 2, 3, 4, 5, 0', 4', 5'\}$ and $x_0$.}

We proceed by contradiction:
\begin{enumerate}
    \item ($x_2 \neq t_0.$) If $x_2 = t_0$ then $t_0$ would have degree four and contradict the fact that $G$ is a cubic graph. Hence  $x_2 \neq t_{0}$.
    
    \item ($x_2 \neq t_{0'}.$) If $x_2 = t_{0'}$, by Proposition \ref{3}, $(t_{0'} t_1 t_2 t_{0'})$ is a $\Pi$--facial cycle. Thus, by the previous statement and Definition \ref{poly}, $(t_3 t_2 t_{0'} t_{5'})$ is a $\Pi$--facial subwalk. By the previous statement and Proposition \ref{pathtwo}, $(t_3 t_2 t_{0'} t_{5'} t_{4'} t_3)$ is a $\Pi$--facial cycle. The latter and Definition \ref{poly}, imply that $(t_1 t_2 t_3 t_{4'})$ is not a $\Pi$--facial subwalk. This, in turn, implies that $(t_1 t_2 t_3 t_4)$ is a $\Pi$--facial subwalk, and the disjunction can be solved, contradicting our hypothesis. Hence  $x_2 \neq t_{0'}$. 
    
    \item ($x_2 \neq t_{4}.$) If $x_2 = t_4$, by Proposition \ref{3}, $(t_2 t_3 t_4 t_2)$ is a $\Pi$--facial cycle. The latter together with Definition \ref{poly} imply $(t_1 t_2 t_3 t_4)$ is not a $\Pi$--facial subwalk, thus $(t_1 t_2 t_3 t_{4'})$ is a $\Pi$--facial subwalk and the disjunction can be solved, contradicting our hypothesis. Hence  $x_2 \neq t_{4}$. 
    
    \item ($x_2 \neq t_{4'}.$) The proof follows identical arguments to those used in the previous case. 
    
    \item ($x_2 \neq t_{5}.$) If $x_2 = t_5$, by Proposition \ref{4}, $(t_2 t_3 t_4 t_5 t_2)$ is a $\Pi$--facial cycle. The latter and Definition \ref{poly}, imply that $(t_1 t_2 t_3 t_4)$ is not a $\Pi$--facial subwalk, thus $(t_1 t_2 t_3 t_{4'})$ is a $\Pi$--facial subwalk and the disjunction can be solved, contradicting our hypothesis. Hence  $x_2 \neq t_{5}$. 
    
    \item ($x_2 \neq t_{5'}.$) The proof is similar to the previous case. 

    \item ($x_2 \neq x_{0}.$) If $x_2 = x_0$, then by Proposition \ref{4}, $(t_0 t_1 t_{2} x_0 t_0)$ is a $\Pi$-facial cycle. Thus the $\Pi$--facial cycle $(t_0 t_1 t_{2} x_0)$ intersects to the $\Pi$--facial subwalk $(t_0 t_1 t_2 t_3)$ in three vertices contradicting that we have a polyhedral embedding.
\end{enumerate}
$\boxminus$

\emph{Claim 5. The vertex $x_5$ is different to the vertices $t_i$ for all $i\in\{0, 1, 2, 3, 4, 5, 0', 4', 5'\}$ and $x_0, x_2$, and adjacent to $x_2$.}

Once more, we proceed by contradiction to prove that the vertices are different:
\begin{enumerate}
    \item ($x_5 \neq t_{0'}$.) If $x_5 = t_{0'}$, then Proposition \ref{4} implies that $(t_5 t_0 t_1 t_{0'} t_5)$ is a $\Pi$-facial cycle. The latter and Definition \ref{poly} imply that $(t_1 t_0 t_5 t_4)$ is not a $\Pi$-facial subwalk. Then, by Corollary \ref{otro}, it follows that $(t_1 t_2 t_3 t_4)$ is the $\Pi$-facial subwalk corresponding to the edge $[t_1 t_4]$. However, using Proposition \ref{pathtwo} we can deduce that $(t_0 t_1 t_2 t_3 t_4 t_5 t_0)$ is a $\Pi$--facial cycle which intersects the $\Pi$--facial cycle $(t_5 t_0 t_1 t_{0'} t_5)$ in two edges, contradicting the polyhedrality of the embedding. In conclusion, this case can never occur. 
    
    \item ($x_5 \neq t_{2},$ $x_5 \neq t_{1},$ $x_5 \neq t_{3},$ $x_5 \neq t_{4'},$.) If any of the inequalities was an equality instead we would have a contradiction on the fact that $G$ is a cubic graph.

    \item ($x_5 \neq x_{0}$.) If $x_5 = x_0$, then it is only possible to find two internally disjoint $3$-paths between $t_2$ and $t_5$. Since $[[t_2 t_5]]$ is a double scaffold edge and $(t_2 t_5)\not\in E(G)$ (otherwise contradicts that $G$ is a cubic graph), Proposition \ref{pizzageneralization} implies that $(t_0 t_1 t_2 t_3 t_4 t_5 t_0)$ is a $\Pi$-facial cycle. In turn, the latter implies that $(t_1 t_2 t_3 t_4)$ is the $\Pi$-facial subwalk corresponding to $[t_1 t_4]$ and that $(t_1 t_{0'} t_{5'} t_{4'})$ is a $\Pi$-facial subwalk, thus we can solve the disjunction, contradicting our hypothesis. Hence  $x_5 \neq x_{0}$. 
    
    \item ($x_5 \neq x_{2}$.) If $x_5 = x_2$, Proposition \ref{pathtwo} implies that $(t_0 t_1 t_2 t_3 t_4 t_5 t_0)$ is not a $\Pi$-facial cycle. Since $(t_0 t_1 t_2 t_3)$ is a $\Pi-$facial subwalk, using Proposition \ref{3path} we can deduce that $(t_0 t_1 t_2 t_3 t_{4'} x_0 t_0)$ is a $\Pi$-facial cycle; i.e., $(t_1 t_2 t_3 t_{4'})$ is the $\Pi-$facial subwalk that corresponds to $[t_1 t_{4'}]$. The latter and Corollary \ref{otro} imply that $(t_1 t_0 t_5 t_4)$ is the $\Pi$-facial subwalk corresponding to $[t_1 t_4]$, and then we can solve the disjunction, contradicting our hypothesis. Hence  $x_5 \neq x_{2}$.
    
    \item ($x_5 \neq t_{5'}$.) If $x_5 = t_{5'}$, it is only possible to find two $3$-paths internally disjoint between them $t_2$ and $t_5$. Since $[[t_2 t_5]]$ is double and $(t_2 t_5)\not\in E(G)$ (otherwise contradicts that $G$ is a cubic graph), Proposition \ref{pizzageneralization} implies that $(t_0 t_1 t_2 t_3 t_4 t_5 t_0)$ is a $\Pi$-facial cycle. Hence, $(t_1 t_2 t_3 t_4)$ is the $\Pi$-facial subwalk corresponding to $[t_1 t_4]$. Using the latter and Corollary \ref{otro} is easy to see  that $(t_1 t_{0'} t_{5'} t_{4'})$ is the $\Pi$-facial subwalk corresponding to $[t_1 t_{4'}]$. That is  we can solve the disjunction, contradicting our hypothesis. Hence  $x_5 \neq x_{5'}$.
\end{enumerate}
Finally,  $x_2$ and $x_5$ are adjacent by Claim 2. $\boxminus$

\emph{Claim 6. The vertex $x_4$ is different to the vertices $t_i$, for all $i\in\{0, 1, 2, 3, 4, 5, 0', 4'\}$ and $x_0, x_2, x_5$.}

We proceed by contradiction:
\begin{enumerate}
    \item ($x_4 \neq t_0$, $x_4 \neq t_1$, $x_4 \neq t_2$, $x_4 \neq t_{4'}$). If any of the inequalities was an equality instead we would have a contradiction on the fact that $G$ is a cubic graph.

    \item ($x_4 \neq t_{0'}$.) If $x_4 = t_{0'}$ then there is a $2$-path $(t_1 t_{0'} t_4)$ and two $3$--paths, $(t_1 t_2 t_3 t_4)$ and $(t_1 t_0 t_5 t_4)$ between $t_1$ and $t_4$. Since $[[t_1 t_4]]$ is a double scaffold edge, and there is no edge $(t_1 t_4)$, by Proposition \ref{pizzageneralization} $(t_0 t_1 t_2 t_3 t_4 t_5 t_0)$is a $\Pi$--facial cycle. But this contradicts Proposition \ref{pathtwo} since there is a $2$--path $(t_1 t_{0'} t_4)$ between $t_1$ and $t_4$. Hence  $x_4 \neq x_{0'}$.
    
    \item ($x_4 \neq x_0$.) If $x_4 = x_0$, Proposition \ref{4} implies that $(t_0 t_5 t_4 x_0 t_0)$ is a $\Pi$-facial cycle. The latter, Proposition \ref{one} and Corollary \ref{otro} imply that $(t_1 t_2 t_3 t_4)$ is the $\Pi$-facial subwalk corresponding to $[t_1 t_4]$ and we can solve the disjunction, contradicting our hypothesis. Hence  $x_4 \neq x_{0}$.
    
    \item ($x_4\neq x_2 $.) If $x_4=x_2 $, then Proposition \ref{4} implies that $(t_2 t_3 t_4 x_2 t_2)$ is a $\Pi$-facial cycle. The latter and Proposition \ref{one} imply that $(t_1 t_2 t_3 t_{4'})$ is the $\Pi$-facial subwalk corresponding to $[t_1 t_{4'}]$. Subsequently, using Corollary \ref{otro} it is possible to deduce that $(t_1 t_0 t_5 t_4)$ is the $\Pi$-facial subwalk corresponding to $[t_1 t_4]$, that is,  we can solve the disjunction, contradicting our hypothesis. Hence  $x_4 \neq x_{2}$.
    
    \item ($x_4\neq x_5 $.) If $x_4 = x_5$, using  Proposition \ref{3} we deduce that $(t_4 t_5 x_5 t_4)$ is a $\Pi$--facial cycle. In addition to this, using Proposition \ref{one} we can prove that $(t_0 t_5 t_4 t_3)$ is a $\Pi$-facial subwalk, ussing that $(t_0 t_3)\not \in E(G)$ and applying Proposition \ref{pizzageneralization}, we have $(t_0 t_1 t_2 t_3 t_4 t_5 t_0)$ is a $\Pi$-facial cycle. The latter implies that $(t_1 t_2 t_3 t_4)$ is the $\Pi$-facial subwalk corresponding to $[t_1 t_4]$. Using that $(t_1 t_2 t_3 t_4)$ is the $\Pi$-facial subwalk corresponding to $[t_1 t_4]$ and Corollary \ref{otro} we arrive to the conclusion that $(t_1 t_{0'} t_{5'} t_{4'})$ is the $\Pi$-facial subwalk corresponding to $[t_1 t_{4'}]$. Thus, we can solve the disjuntion, contradicting our hypothesis. Hence $x_4 \neq x_5$. $\boxminus$
\end{enumerate}

\emph{Claim 7. $[[t_1 t_{4'}]]$ is a double scaffold edge.}

Since  $[t_1 t_{4'}] \in \mathcal{S}$ and between $t_1$ and $t_{4'}$ there are three internally disjoint $3$-paths, $(t_1 t_2 t_3 t_{4'})$, $(t_1 t_{0'} t_{5'} t_{4'})$ and $(t_1 t_0 x_0 t_{4'})$, by Proposition \ref{3path}, necessarily $[[t_1 t_{4'}]]$ is double. $\boxminus$

\emph{Claim 8. $(t_1 t_0 x_0 t_{4'})$ is a $\Pi$-facial subwalk.}

Since $(t_0 t_1 t_2 t_3)$ is a $\Pi$-facial subwalk and $[[t_1 t_{4'}]]$ is a double scaffold edge (by Claim 7), then Corollary \ref{pelito}, implies that $(t_1 t_0 x_0 t_{4'})$ is a $\Pi$-facial subwalk.$\boxminus$
\\

\emph{Claim 9. $[[t_0 t_{5'}]]$, $[[x_0 t_{0'}]]$, $[[t_0 t_3]]$ and $[[x_0 t_2]]$ are double scaffold edges and there exist three internally disjoint paths between each pair of end vertices.}

If some edge, $[t_0 t_{5'}]$, $[x_0 t_{0'}]$, $[t_0 t_3]$ or $[x_0 t_2]$ does not belong to $\mathcal{S}$, or it is not double or they are all double but for one of them there exist only two $3$-paths between its end vertices, then by Proposition \ref{pizzageneralization} we know if $(t_1 t_2 t_3 t_{4'})$ is $\Pi$-facial subwalk or not. This would implies that the disjunction can be solved, contradicting the hypothesis. $\boxminus$
\\
Let $x_{5'}$ be the remaining vertex adjacent to $t_{5'}$.

\emph{Claim 10. $(t_0 t_1 t_{0'} t_{5'})$ is a $\Pi$-facial subwalk.}

Notice that for the double scaffold edge $[[t_0 t_{5'}]]$ there exist three internally disjoint $3-$paths, $(t_0 x_0 t_{4'} t_{5'})$, $(t_0 t_5 x_{5'} t_{5'})$ and $(t_0 t_1 t_{0'} t_{5'})$. Since $(t_1 t_0 x_0 t_{4'})$ is a $\Pi$-facial subwalk, then by Corollary \ref{pelito}, necessarily $(t_0 x_0 t_{4'} t_{5'} t_{0'} t_1 t_0)$ or $(t_0 t_5 x_5 t_{5'} t_{0'} t_1 t_0)$ is a $\Pi$-facial cycle, i.e., $(t_0 t_1 t_{0'} t_{5'})$ is a $\Pi$-facial subwalk. $\boxminus$
\\
\\
\emph{Claim 11. $(t_1 t_0 t_5 t_4)$ is a $\Pi$-facial subwalk.}

Since $[[t_1 t_4]]$ is a double scaffold edge, there exist three internally disjoint $3-$paths : $(t_1 t_0 t_5 t_4)$, $(t_1 t_{0'} x_4 t_4)$ and $(t_1 t_2 t_3 t_4 )$. Since $(t_0 t_1 t_2 t_3)$ is a $\Pi$-facial subwalk, then by Corollary \ref{pelito}, $(t_1 t_0 t_5 t_4)$ is a $\Pi$-facial subwalk. $\boxminus$\\

\emph{Claim 12. $x_4 = t_{5'}$}.

If $x_4 \neq t_{5'}$, then $(t_1 t_0 t_5 t_4 x_4 t_{0'} t_1)$ is not a $\Pi$-facial cycle because $(t_0 t_1 t_{0'} t_{5'})$ is a $\Pi$-facial subwalk, and this would contradict Definition \ref{poly}. In addition, Claim 11 implies that $(t_1 t_0 t_5 t_4 t_3 t_2 t_1)$ is a $\Pi$-facial cycle and $(t_1 t_{0'} t_{5'} t_{4'})$ is the $\Pi$-facial subwalk of $[t_1 t_{4'}]$. Thus the disjunction could be solved, contradicting our hypothesis. $\boxminus$ \\

\emph{Claim 13. $(t_2 t_3 t_4 t_5)$ is a $\Pi$-facial subwalk.}

Notice that for the double scaffold edge $[[t_2 t_5]]$ there are three internally disjoint $3$-paths  $(t_2 t_1 t_0 t_5)$, $(t_2 x_2 x_5 t_5)$ and $(t_2 t_3 t_4 t_5)$. Since $(t_0 t_1 t_2 t_3)$ is a $\Pi$-facial subwalk, by Corollary \ref{pelito}, $(t_2 t_3 t_4 t_5)$ is a $\Pi$-facial subwalk.$\boxminus$

\emph{Claim 14. $[[t_3 x_5]]$, $[[t_4 x_2]]$, $[[t_0 t_3]]$ and $[[t_1 t_4]]$ are double scaffold edges in $\mathcal{S}$ and there exist three internally disjoint paths between its end vertices. }

If one of $[t_3 x_5]$, $[t_4 x_2]$, $[t_0 t_3]$ or $[t_1 t_4]$ does not belong in $\mathcal{S}$, or it is not a double scaffold edge, or they are all double but for one of them there exist only two $3$-paths between its end vertices, then by Proposition \ref{pizzageneralization}, we know if $(t_1 t_2 t_3 t_4)$ is a $\Pi$--facial subwalk or not. Consequently, using Proposition \ref{3path}, we could conclude if the $\Pi$--facial subwalk corresponding to the edge $[t_1 t_4]$ is either $(t_1 t_2 t_3 t_4)$ or $(t_1 t_{0'} t_{5'} t_4)$ thus we could solve the disjunction, contradicting our hypothesis. $\boxminus$ \\

Observe that for the edges $[[t_0 t_3]]$ and $[[t_1 t_4]]$ there already exist three internally disjoint paths between their end vertices. For $[[t_3 x_5]]$ and $[[t_4 x_2]]$ we need to find additional edges of $G$ to complete their corresponding $3$-paths.
 
 \emph{Claim 15. $(x_0 x_5)\in E(G)$}

Note that for $[[t_3 x_5]]$ there are already two $3$-paths, $(t_3 t_2 x_2 x_5)$ and $(t_3 t_4 t_5 x_5)$. Since the only adjacent vertex  to $t_3$ that is not in either of the paths already mentioned is $t_{4'}$, then $t_{4'}$ must belong to the third $3$-path. The third $3$-path can continue through either $t_{5'}$ or $x_0$. However, the vertex $t_{5'}$ is adjacent to $t_4, t_{4'}$ and $t_{0'}$, then the third $3$-path necessarily has to continue to $x_0$. Since $x_0$ has degree two thus far, the only option left now is that it is adjacent to $x_5$. $\boxminus$
 
 \emph{Claim 16. $(x_2 x_{0'})\in E(G)$}
 
 The proof of this statement follows in a manner similar to the proof of Claim 15. $\boxminus$
 
\emph{Claim 17. $[[x_2 t_{4'}]]$, $[[t_{5'} x_5]]$ and $[[t_5 t_{0'}]]$ are double scaffold edges.}

Notice that if $[x_2 t_{4'}]\not \in \mathcal{S}$, by Proposition \ref{one}, $(t_1 t_2 t_3 t_{4'})$ is the $\Pi$-facial subwalk corresponding to $[t_1 t_{4'}]$, and then the disjunction can be solved, contradicting the hypothesis. Hence $[x_2 t_{4'}]\in \mathcal{S}$. Since there are three internally disjoint $3$-paths between its end vertices, by Proposition \ref{3path}, $[[x_2 t_{4'}]]$has to be a double scaffold edge. 
The proof follows similarly for $[[t_{5'} x_5]]$ and $[[t_5 t_{0'}]]$. $\boxminus$\\

\emph{Claim 18. The edges $[t_1 t_{0'}]$, $[t_{0'} x_2]$, $[x_2 t_2]$, $[t_1 t_2]$, $[t_3 t_{4'}]$, $[t_{4'} t_{5'}]$, $[x_4 t_4]$, $[t_3 t_4]$, $[x_0 t_0]$, $[t_0 t_5]$, $[t_5 x_5]$ and $[x_5 x_0]$ are in $\mathcal{S}$.}

Since $(t_1 t_2 x_2 t_{0'} t_1)$, $(t_0 t_5 x_5 x_0 t_0)$ and $(t_3 t_4 t_{5'} t_{4'} t_3)$ are $\Pi$-facial cycles (by Proposition \ref{4}), the edges $[t_1 t_{0'}]$, $[t_{0'} x_2]$, $[x_2 t_2]$, $[t_1 t_2]$, $[t_3 t_{4'}]$, $[t_{4'} t_{5'}]$, $[x_4 t_4]$, $[t_3 t_4]$, $[x_0 t_0]$, $[t_0 t_5]$, $[t_5 x_5]$ and $[x_5 x_0]$ are in $\mathcal{S}$. \\

Claim 18, completes the proof of the Theorem.
\end{proof}

\section{Conclusion}

The main aim of the project, that this paper initiates, is to develop an algorithmic procedure for constructing all the extended graphs of a given cubic graph.

Thus, the next step is characterizing when a set of scaffold edges build on top of a cubic graph is indeed an extended graph.

\newpage
\begin{appendices}
\newpage
\section*{Theorem 15. CASE A}
\begin{table}[h!]
\centering
\begin{tabular}{ |c|c|}
\hline
Claim 1. &
Claim 2. \\
\includegraphics[height=80mm]{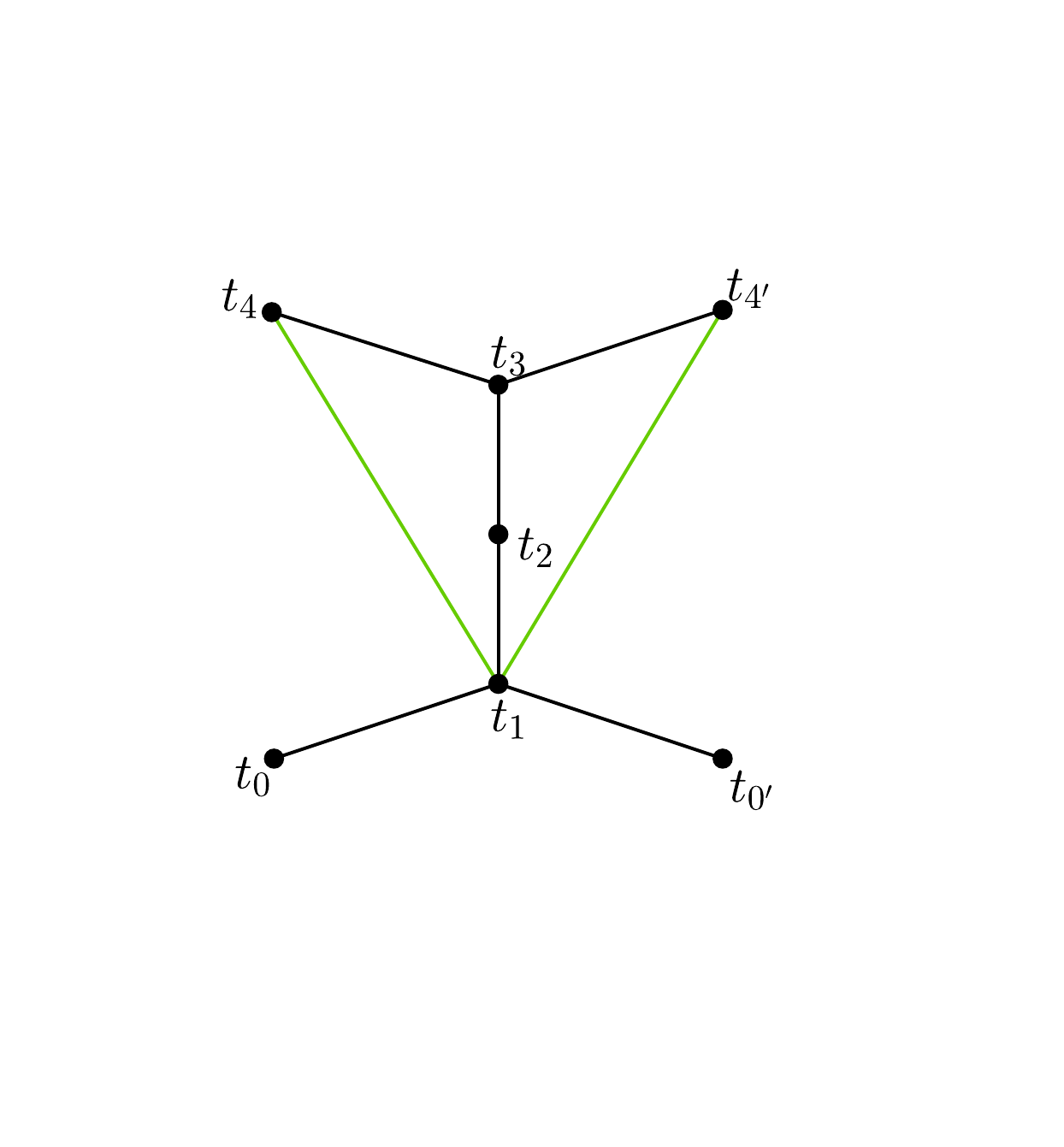}& 
\includegraphics[height=80mm]{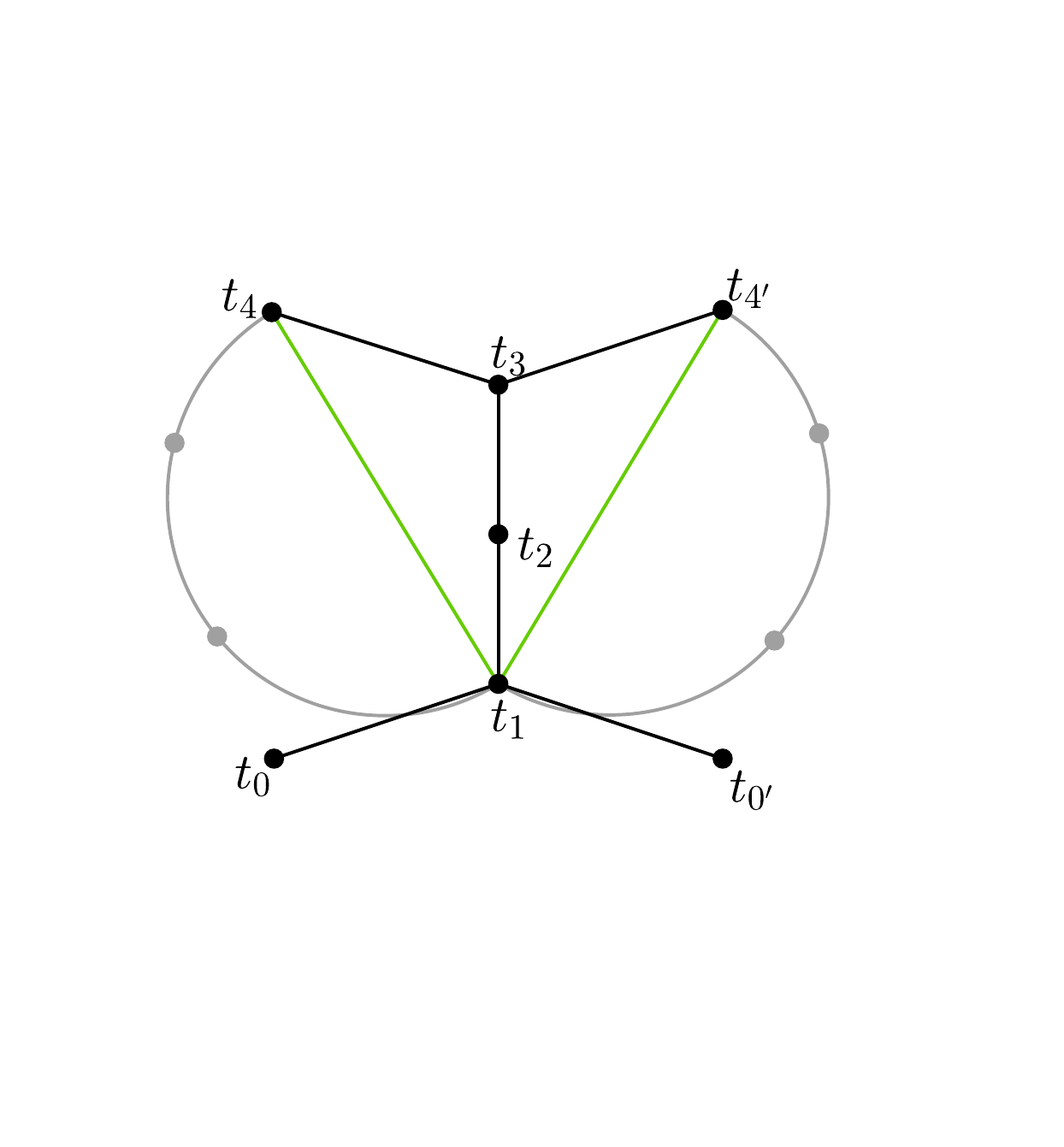}
\\
\hline
Claim 3. &
CASE A. \\
\includegraphics[height=80mm]{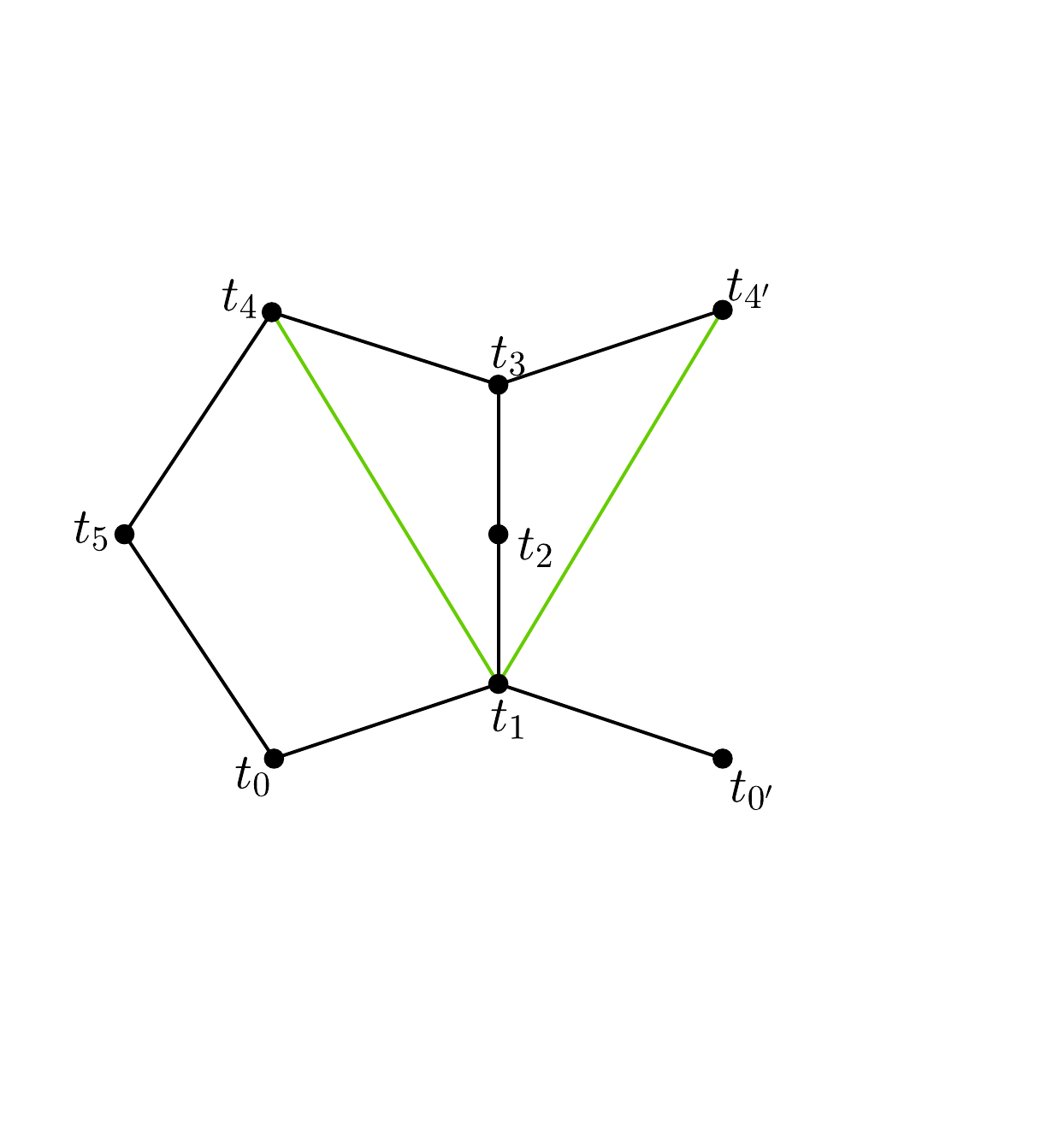}&
\includegraphics[height=80mm]{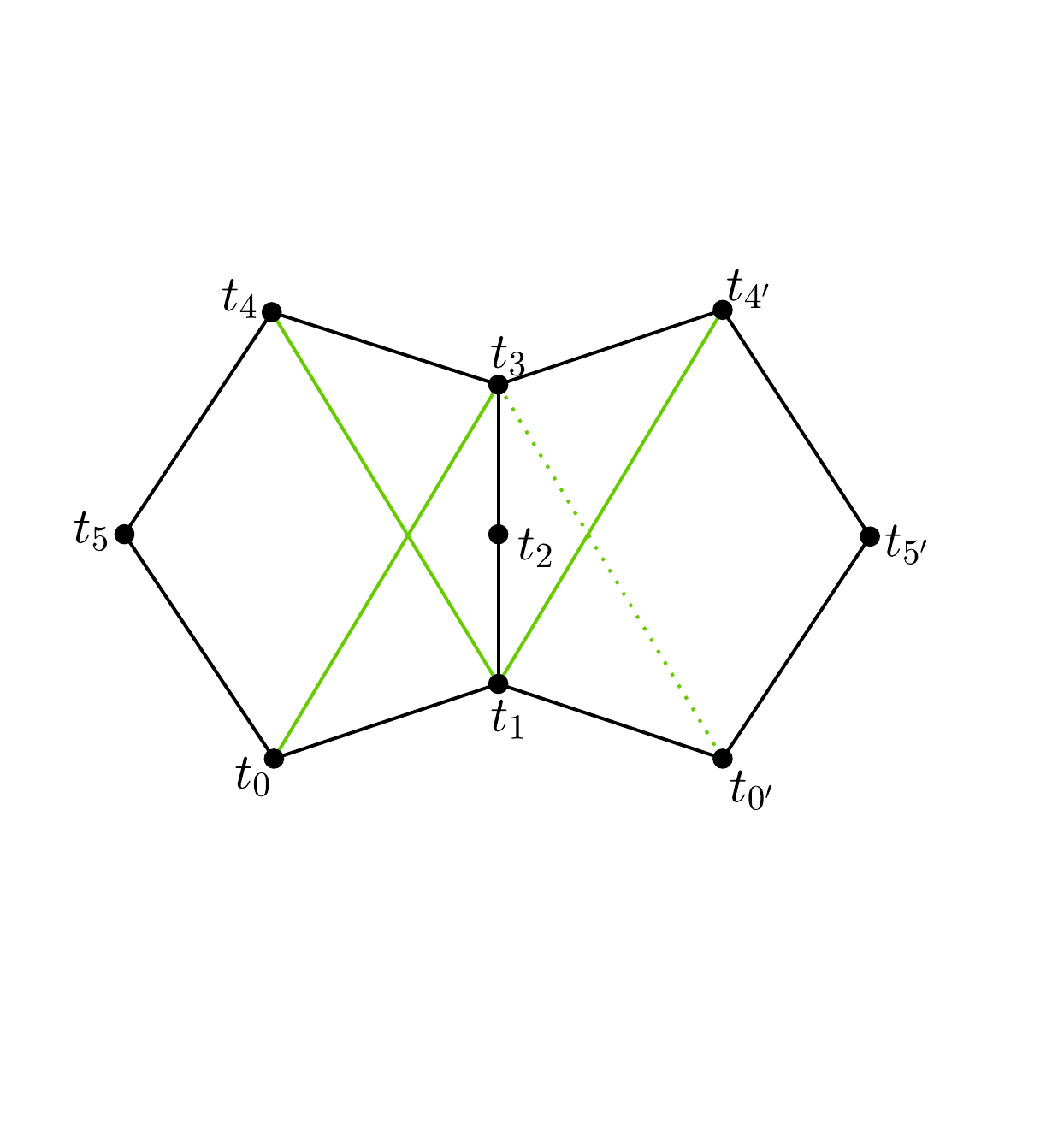}
\\
\hline
\end{tabular}
\end{table}
\newpage
\section*{Theorem 15. CASE B}
\begin{table}[h!]
\centering
\begin{tabular}{| c | c | c |}
\hline
Claim 1. & 
Claim 2. & 
Claim 3. \\
\includegraphics[height=33mm]{claim1.pdf}&
\includegraphics[height=33mm]{claim2.pdf}&
\includegraphics[height=33mm]{claim3.pdf}\\
\hline
CASE B. &
Case B1.&
Claim B1.1.
\\
\includegraphics[height=33mm]{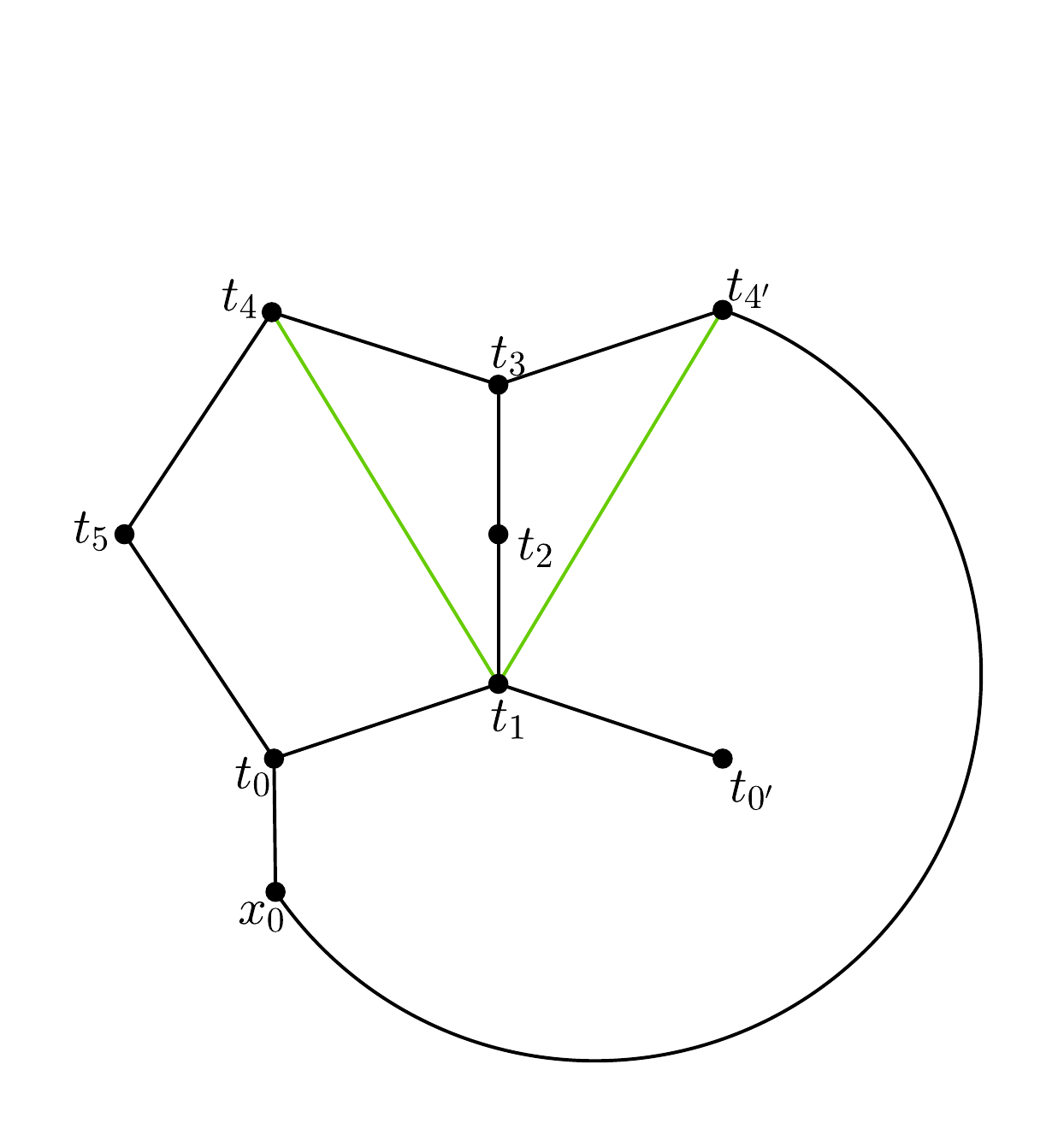} &
\includegraphics[height=33mm]{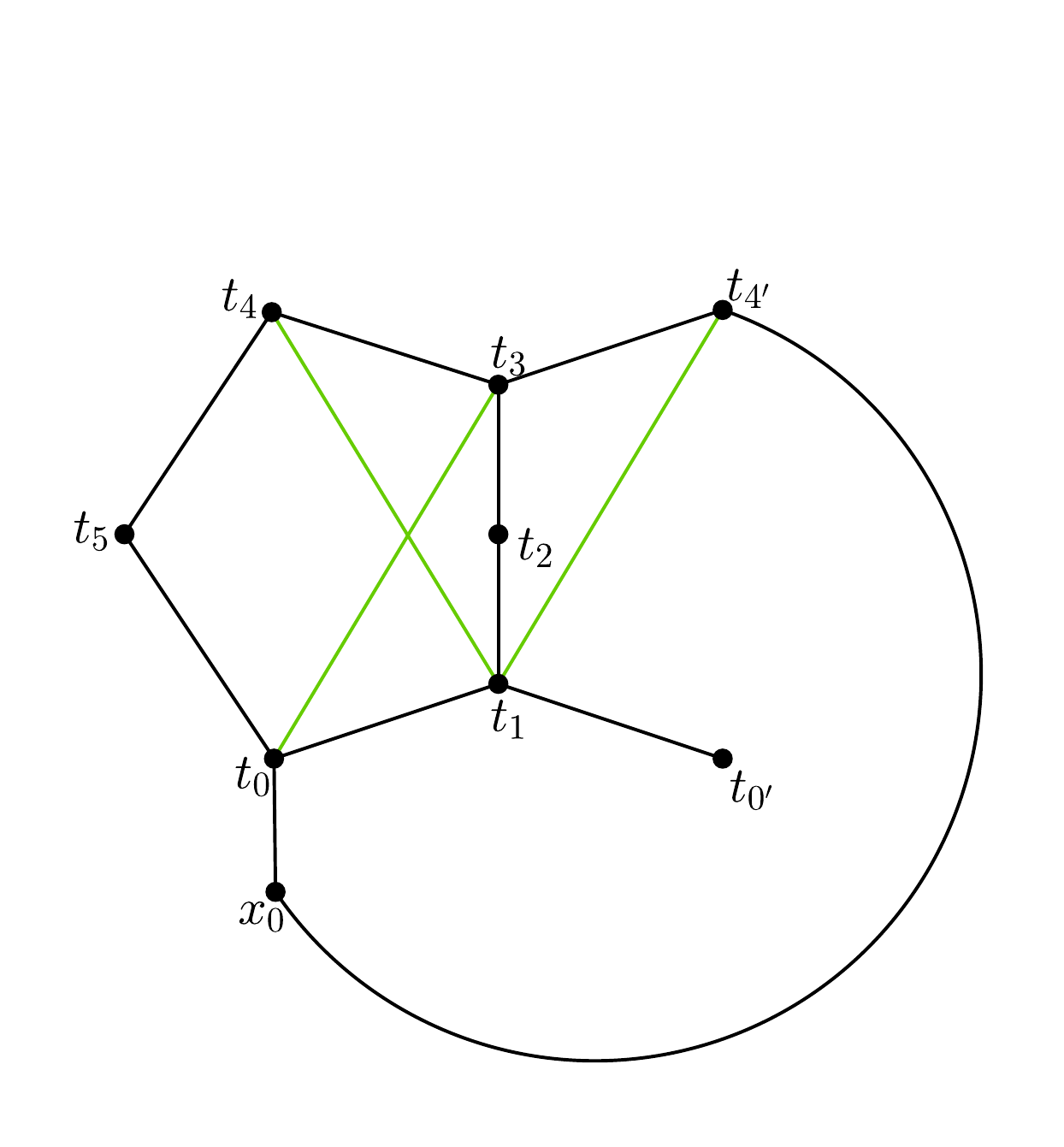} & 
\includegraphics[height=33mm]{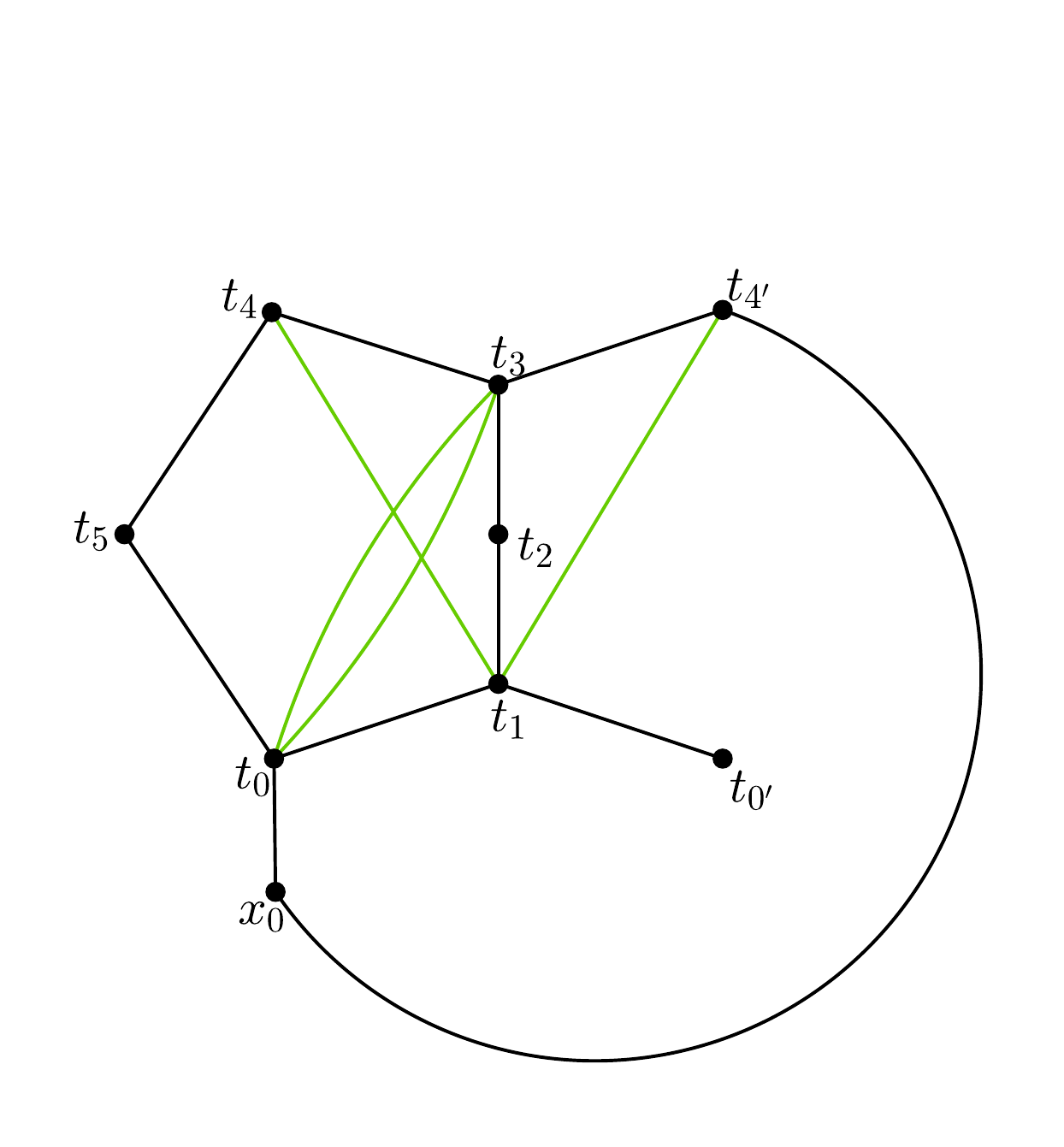} \\
\hline
Claim B1.2. &
Claim B1.3.&
Case B2. \\
\includegraphics[height=33mm]{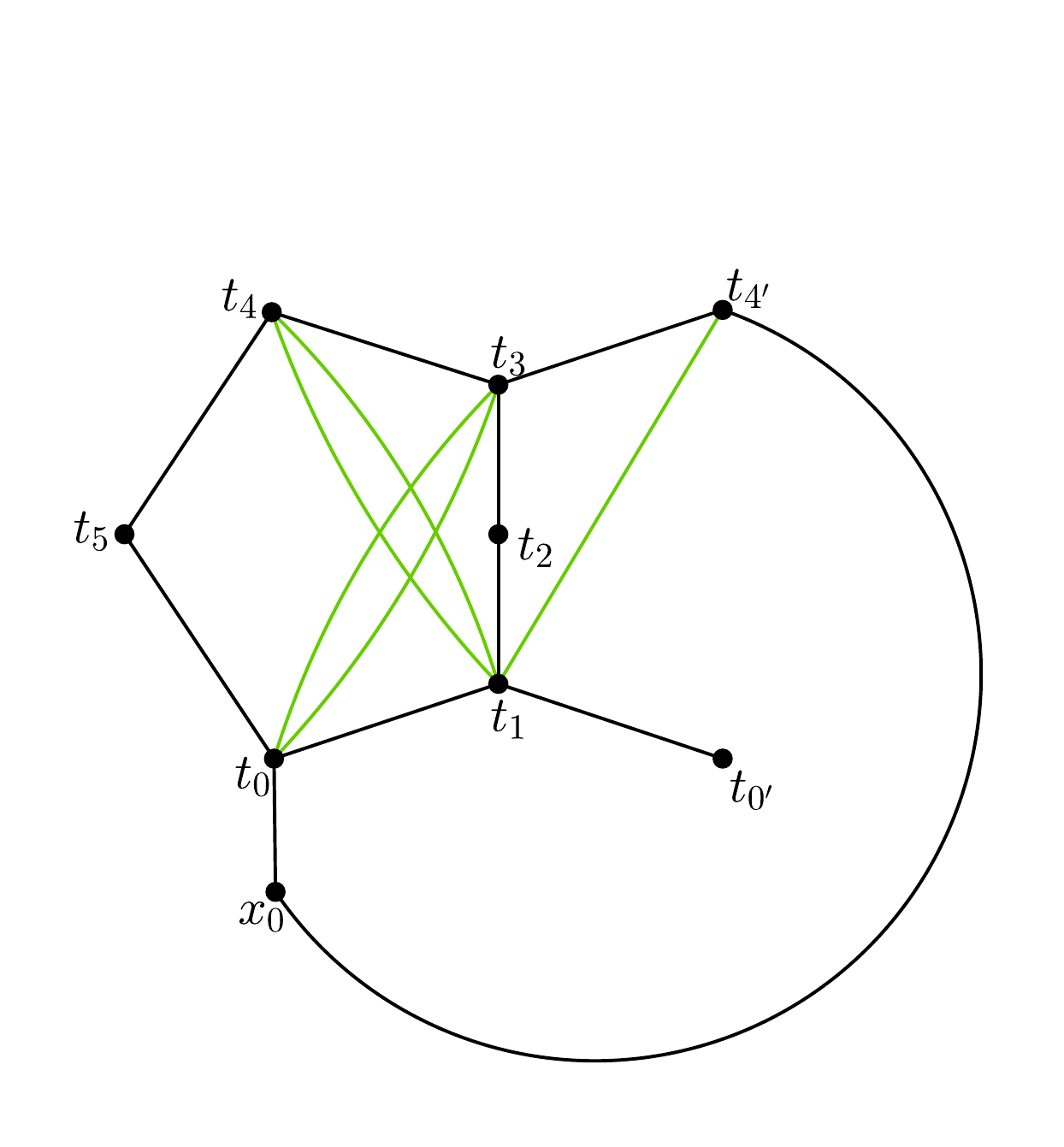}& 
\includegraphics[height=33mm]{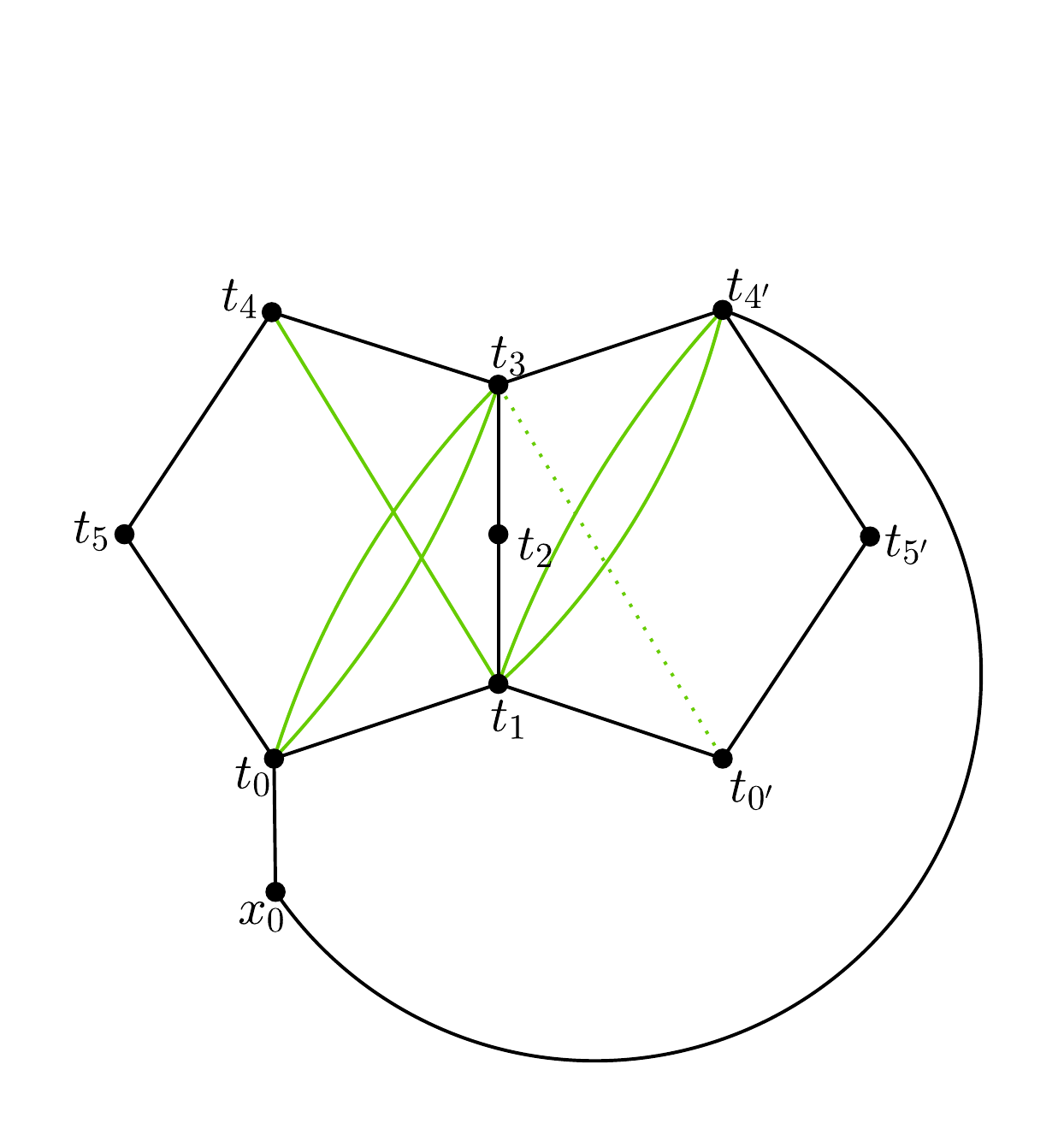}&
\includegraphics[height=33mm]{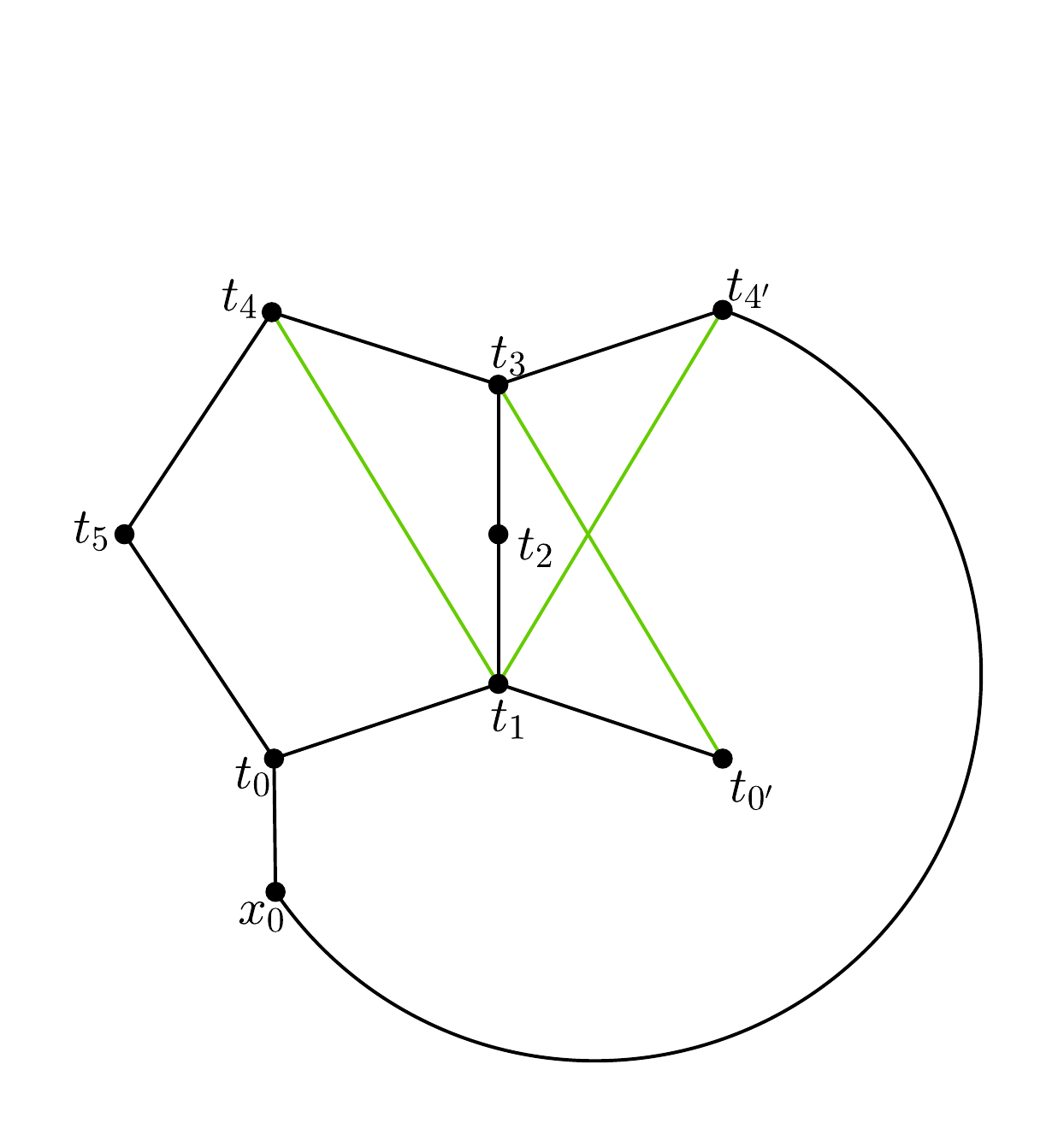} \\
\hline
Claim B2.1 &
Claim B2.1 &
Claim B2.2. \\
\includegraphics[height=33mm]{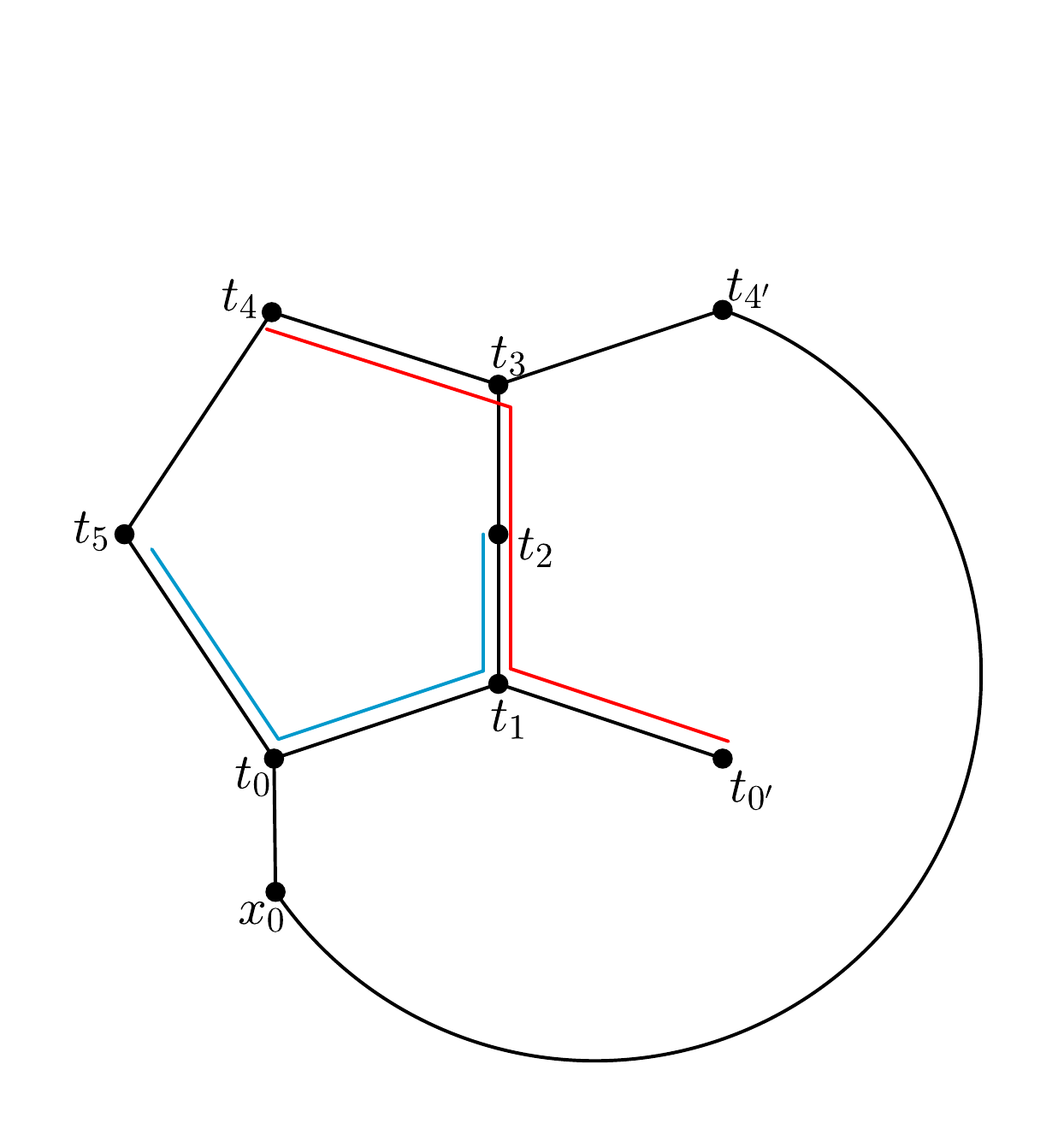} &
\includegraphics[height=33mm]{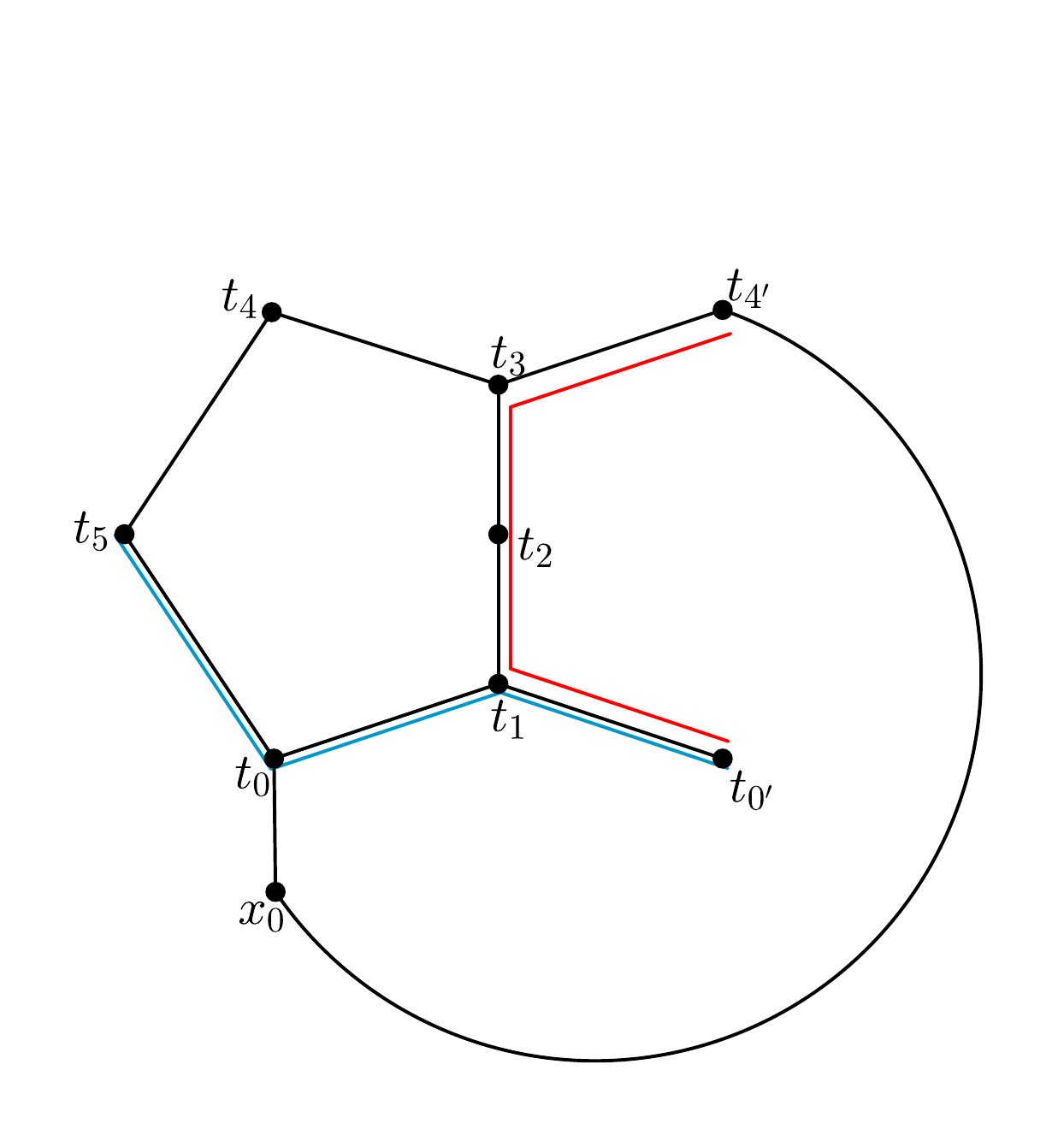} &
\includegraphics[height=33mm]{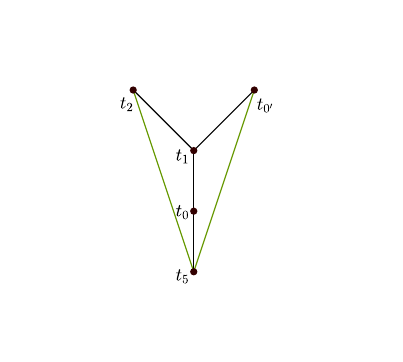} \\
\hline
Case B2.A. &
Case B2.B. &
Case B2.C.\\
\includegraphics[height=33mm]{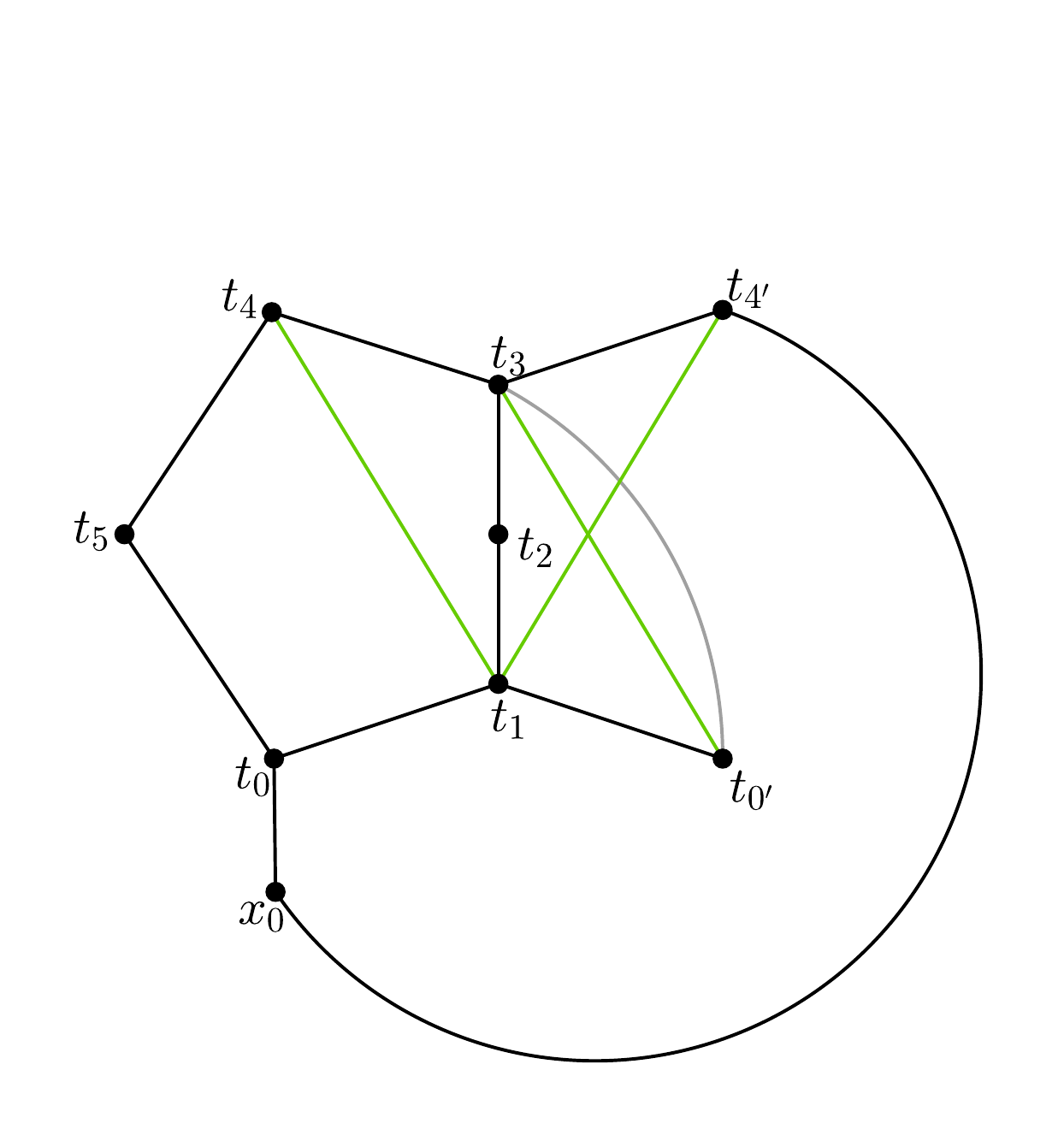} &
\includegraphics[height=33mm]{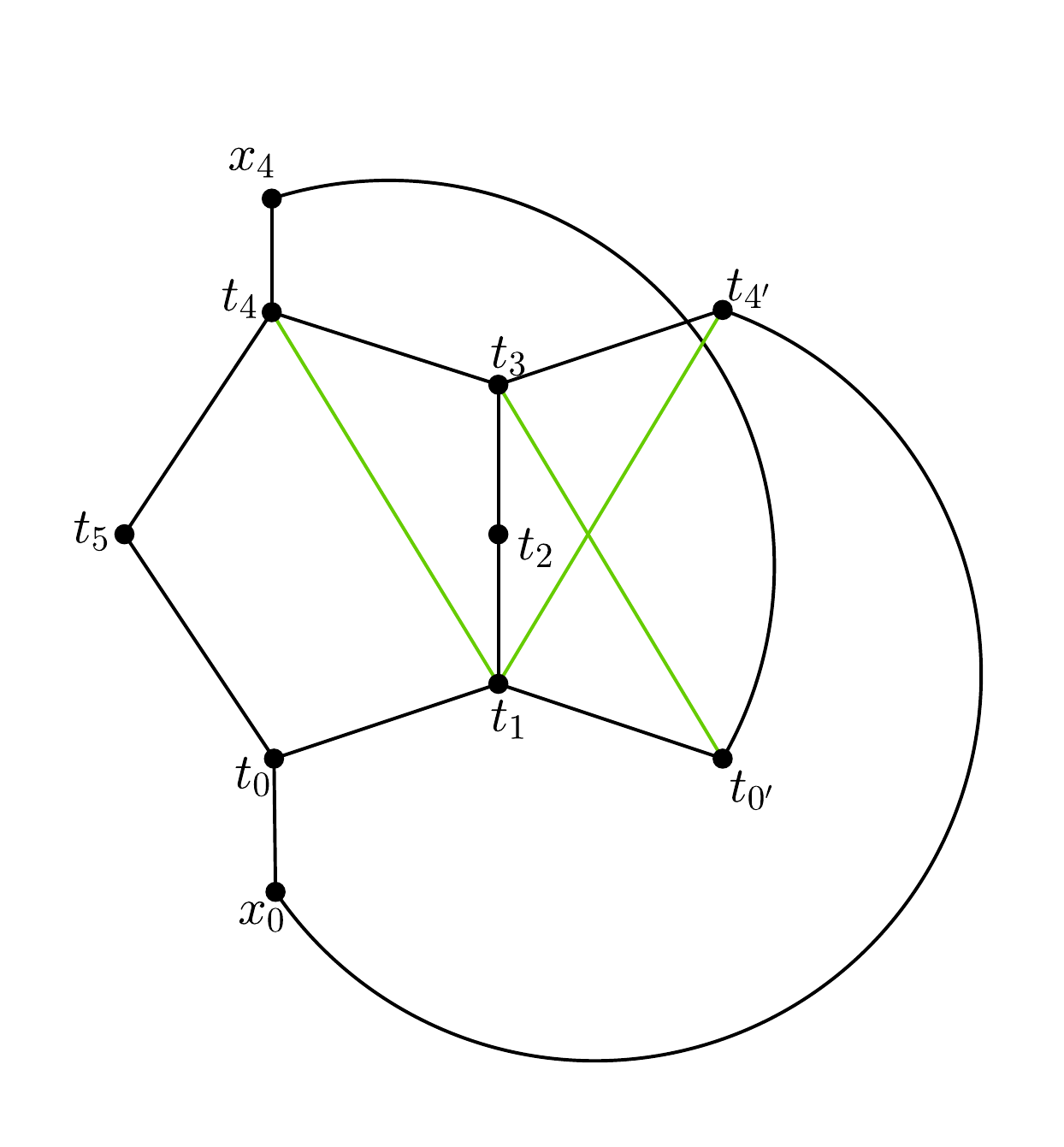} & 
\includegraphics[height=33mm]{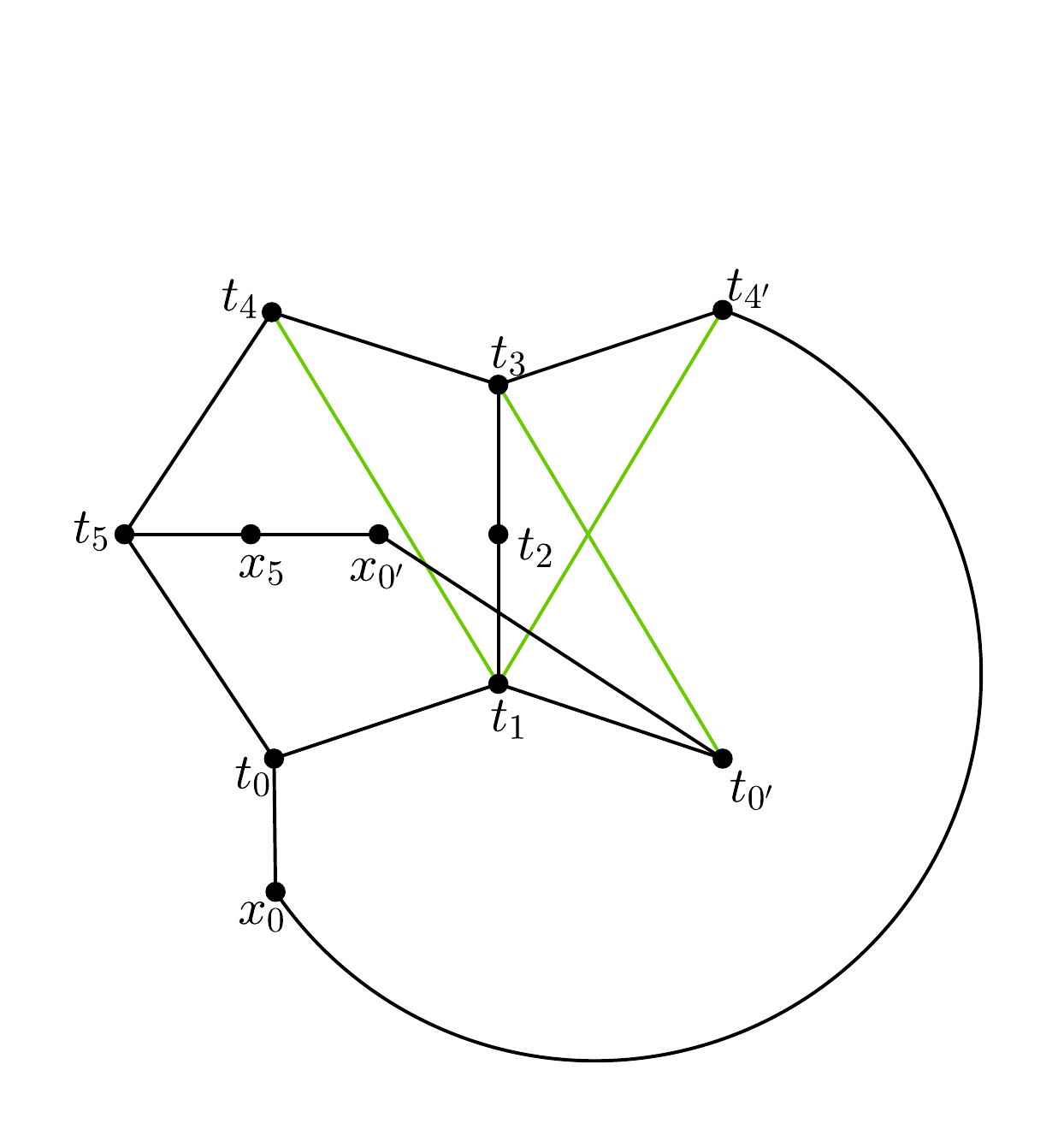} \\
\hline
\end{tabular}
\end{table} 

\newpage
\section*{Theorem 15. CASE C}
\begin{table}[h!]
\centering
\begin{tabular}{| c | c | c |}
\hline
Claim 1. & 
Claim 2. &
Claim 3. \\
\includegraphics[height=43mm]{claim1.pdf} &
\includegraphics[height=43mm]{claim2.pdf} &
\includegraphics[height=43mm]{claim3.pdf} \\
\hline
CASE C. &
Claim C1. &
Claim C2. \\
\includegraphics[height=43mm]{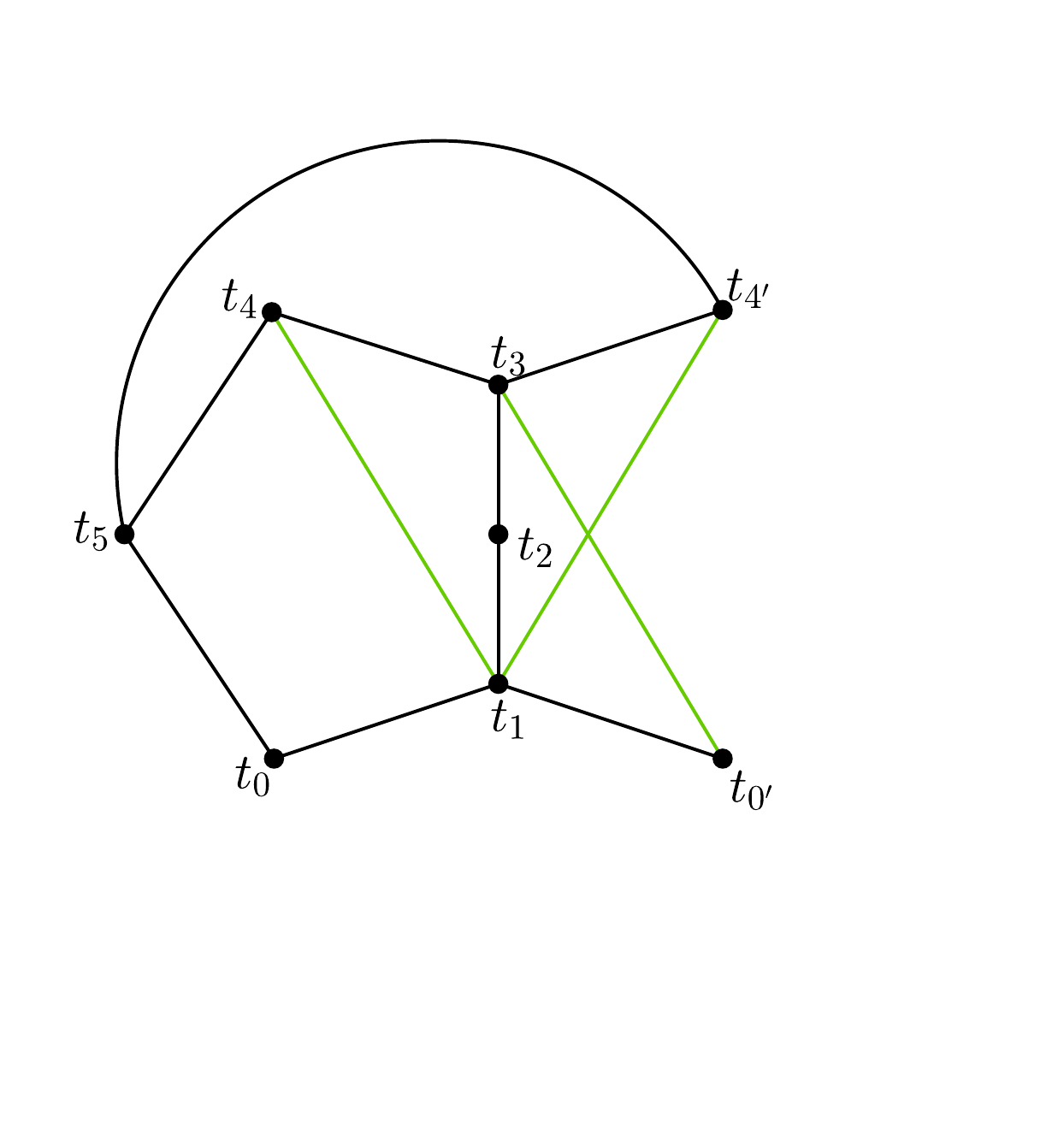} &
\includegraphics[height=43mm]{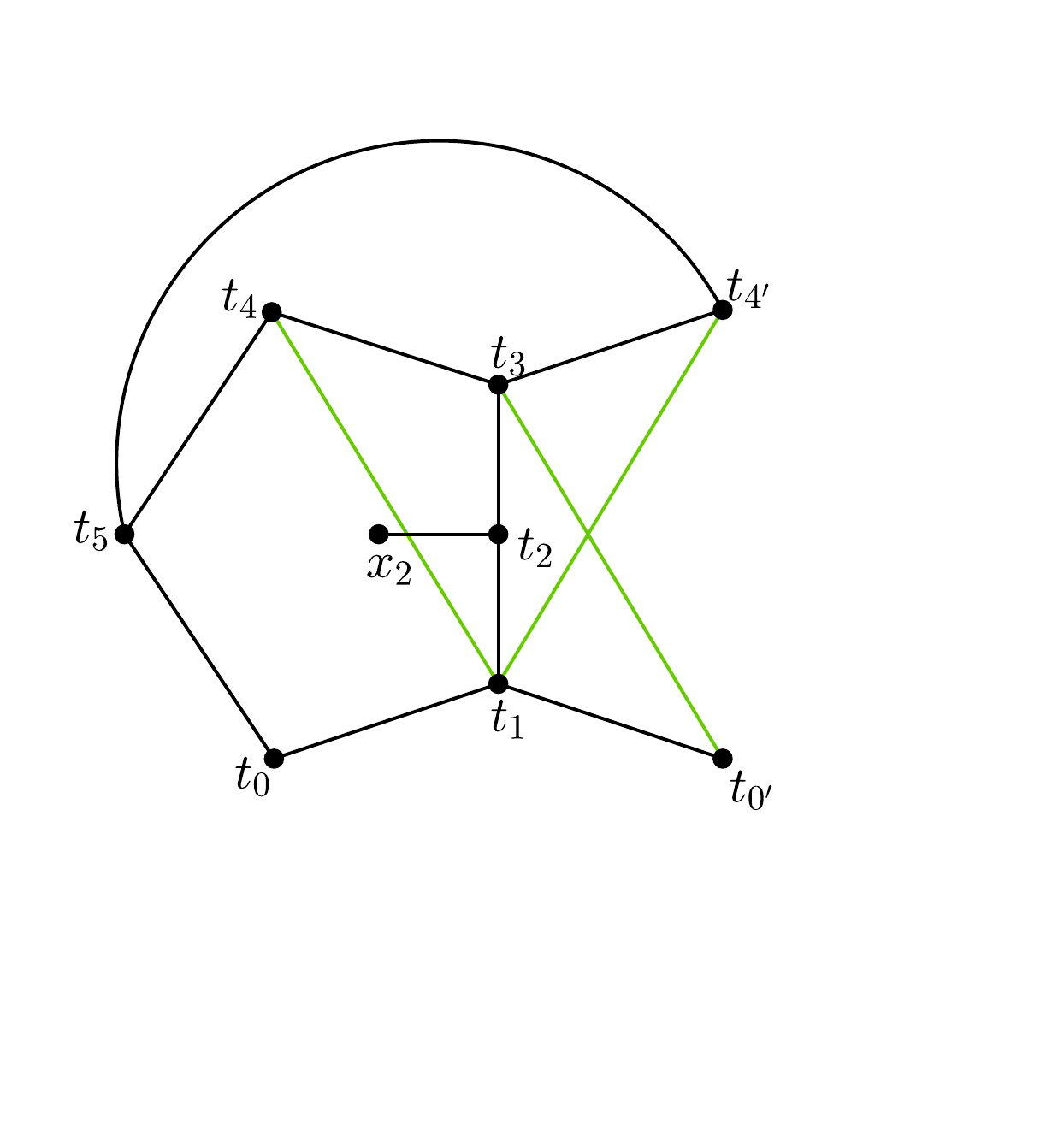} & 
\includegraphics[height=43mm]{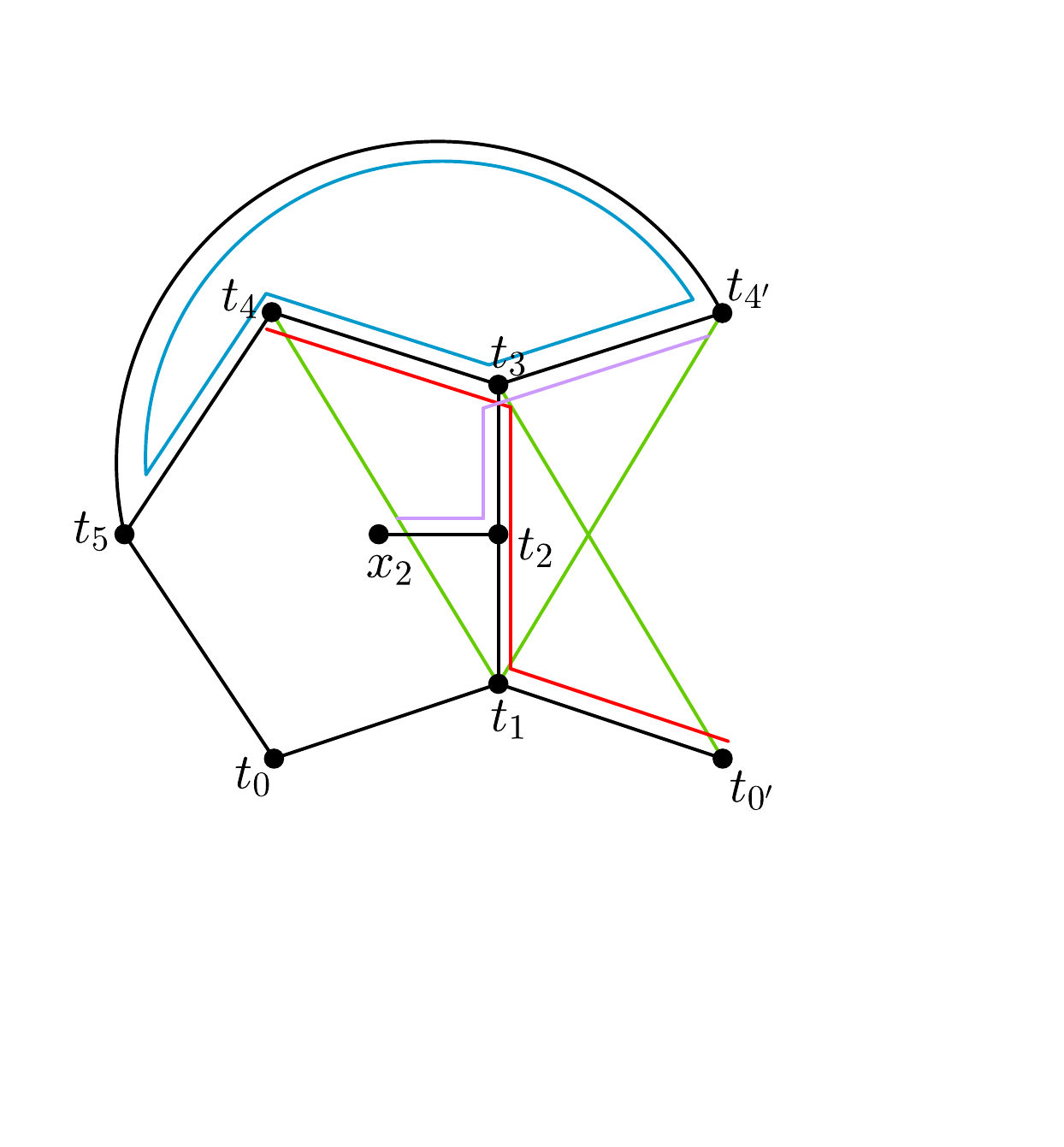} \\
\hline
Claim C3. &
Claim C4.&
Claim C5. \\
\includegraphics[height=43mm]{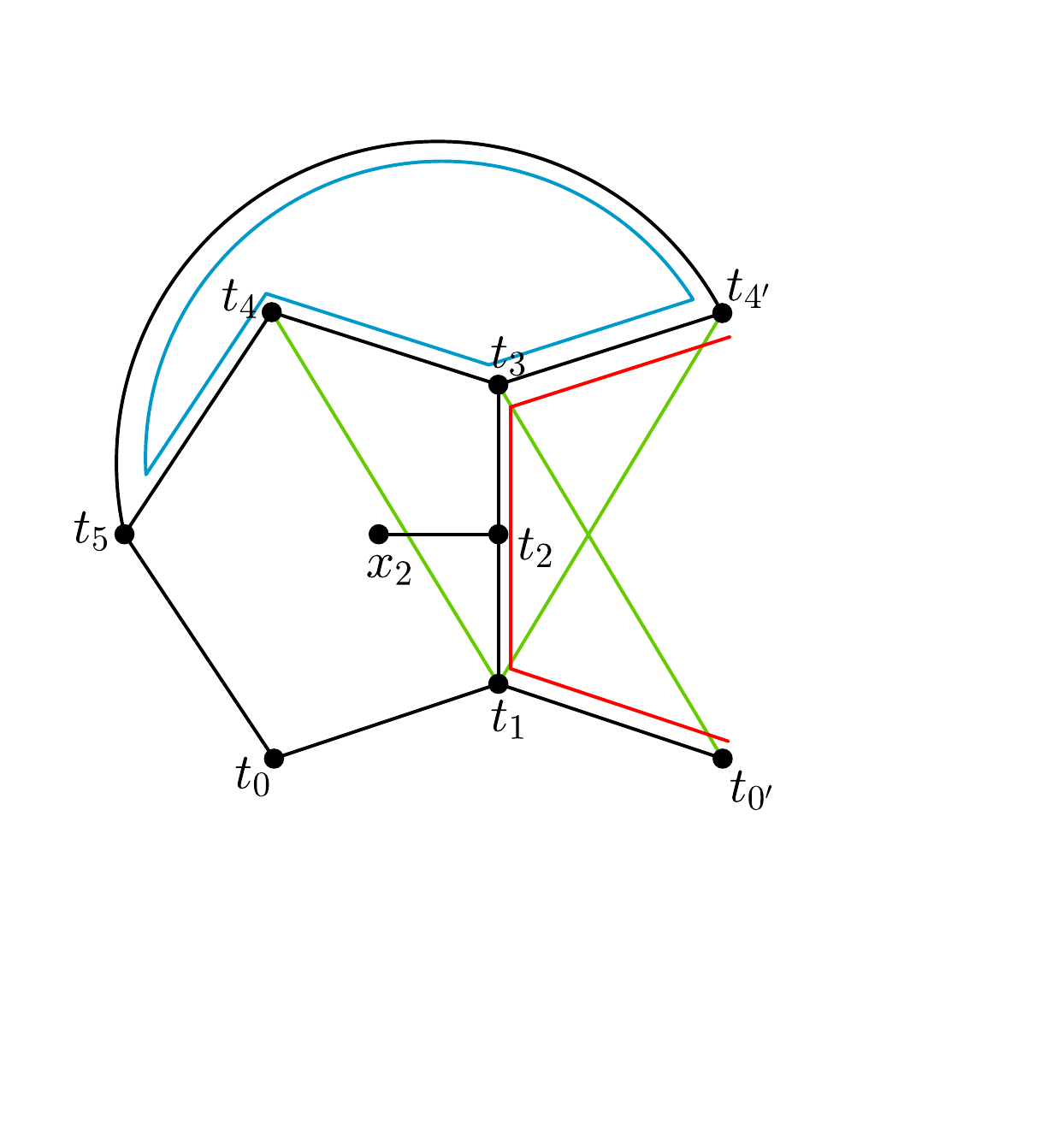} &
\includegraphics[height=43mm]{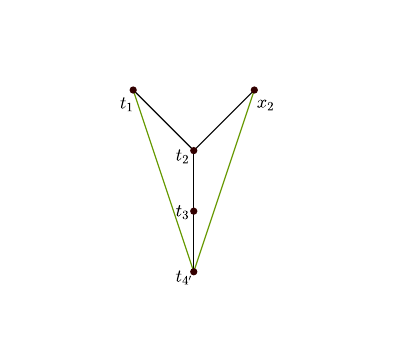} &
\includegraphics[height=43mm]{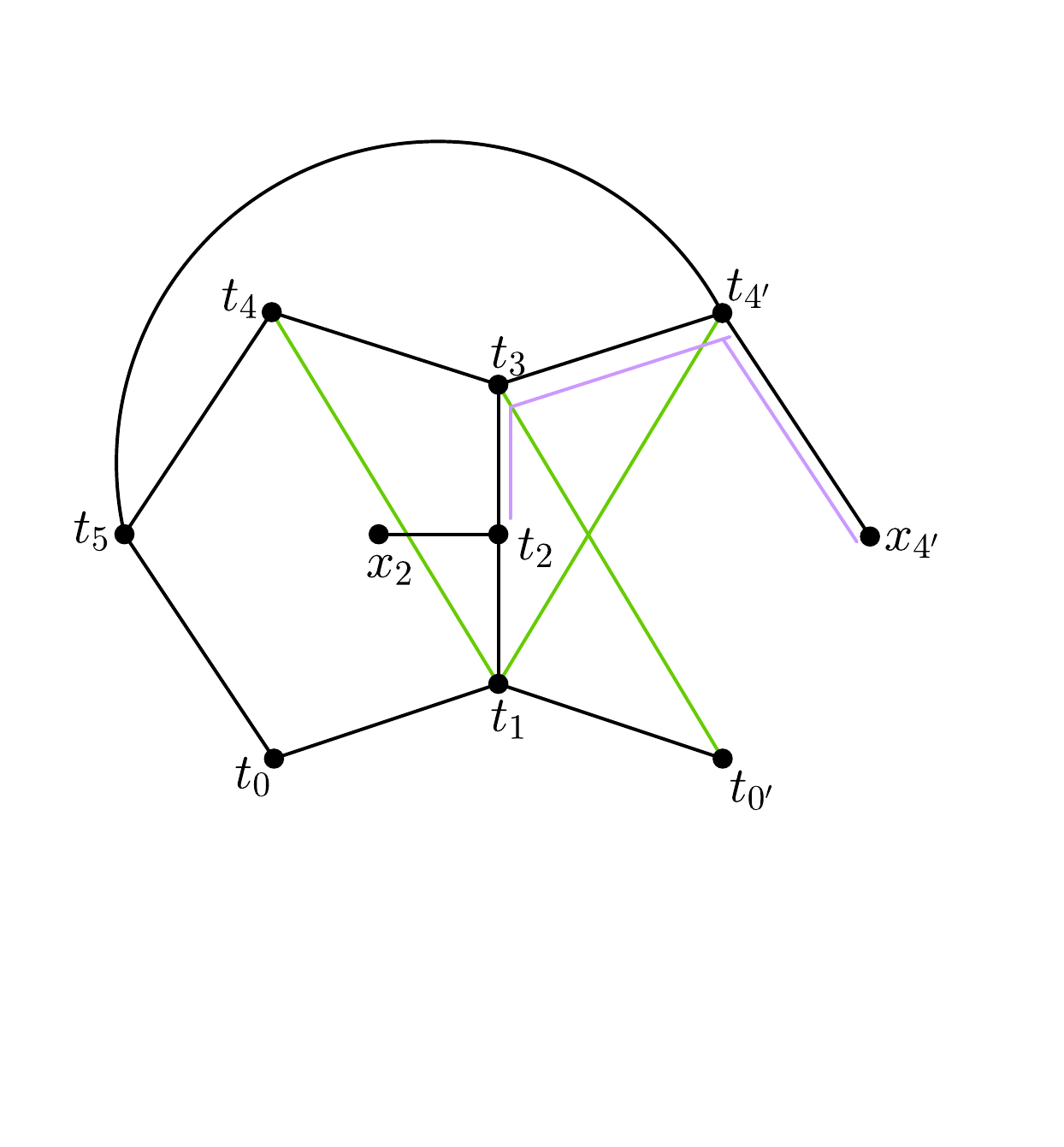} \\
\hline
Claim C6. &
Case C1. &
Claim C1.1.\\
\includegraphics[height=43mm]{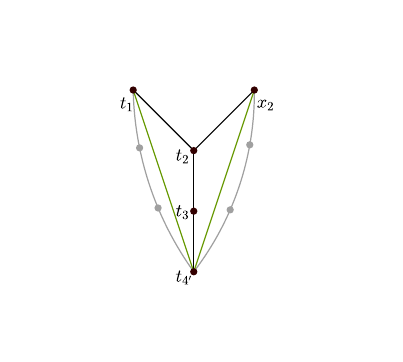} &
\includegraphics[height=43mm]{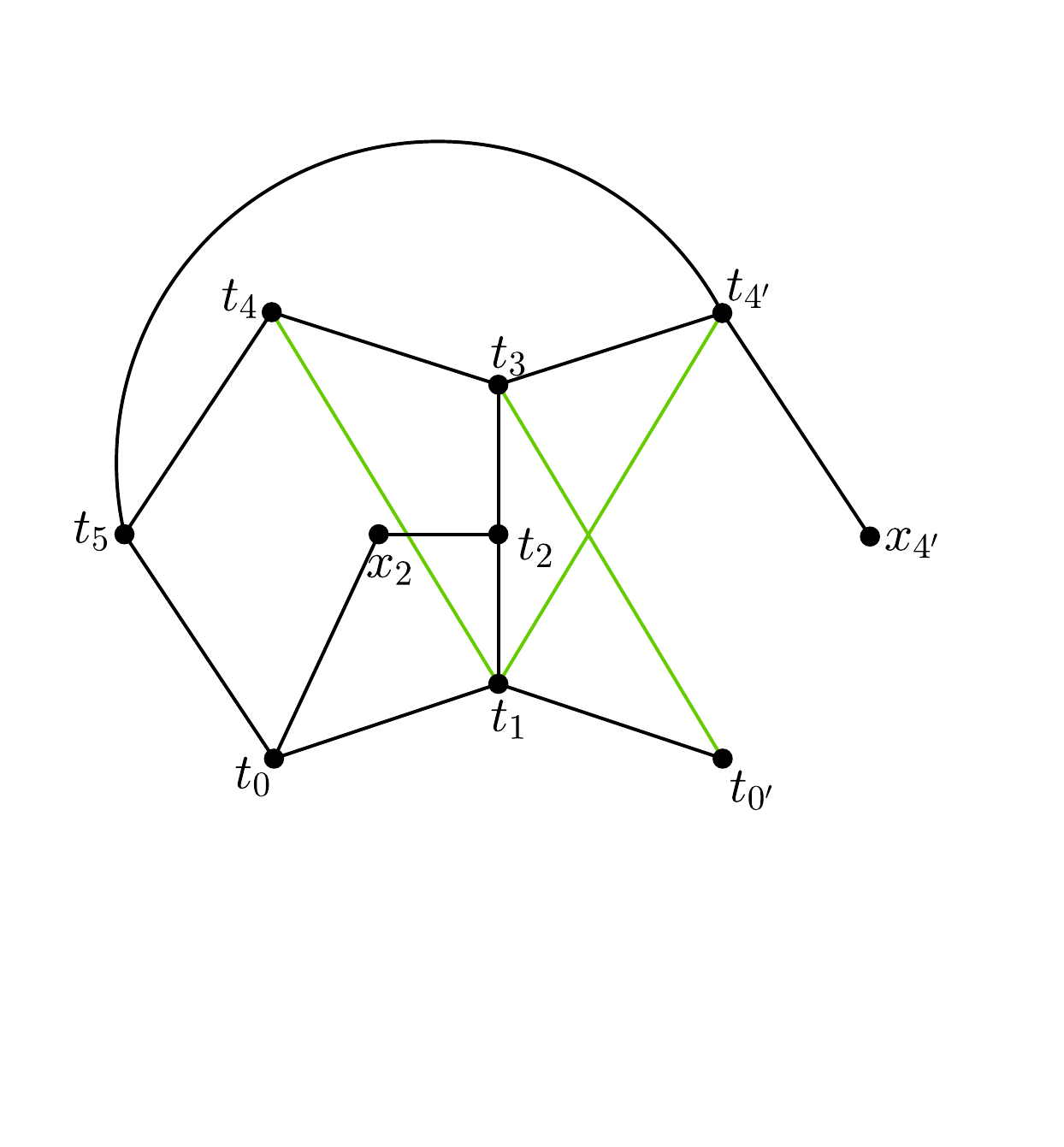} &
\includegraphics[height=43mm]{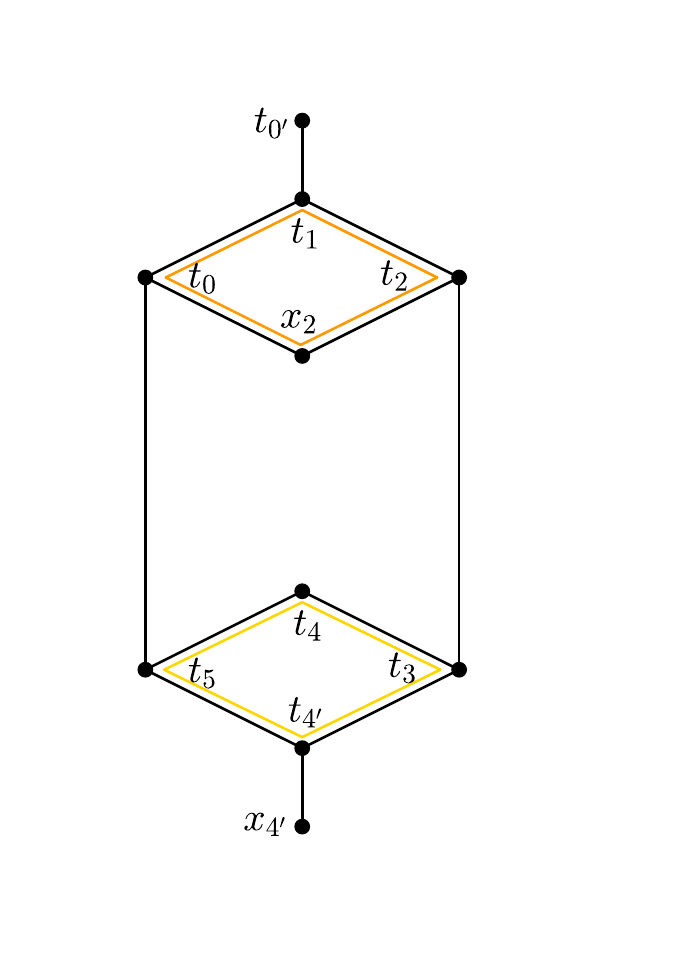}  \\
\hline
\end{tabular}
\end{table}
\clearpage
\begin{table}
\begin{tabular}{|m{4cm}|m{4cm}|m{4cm}|}
\hline
Claim C1.2 & 
Claim C1.3 &
Claim C1.4 \\
\includegraphics[height=42.8mm]{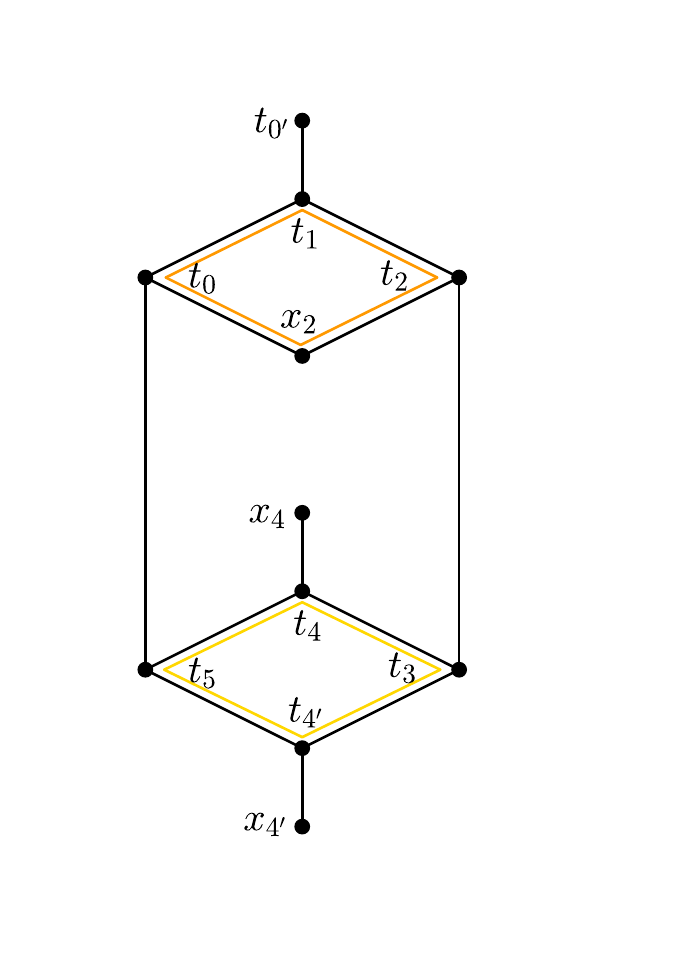} &
\includegraphics[height=42.8mm]{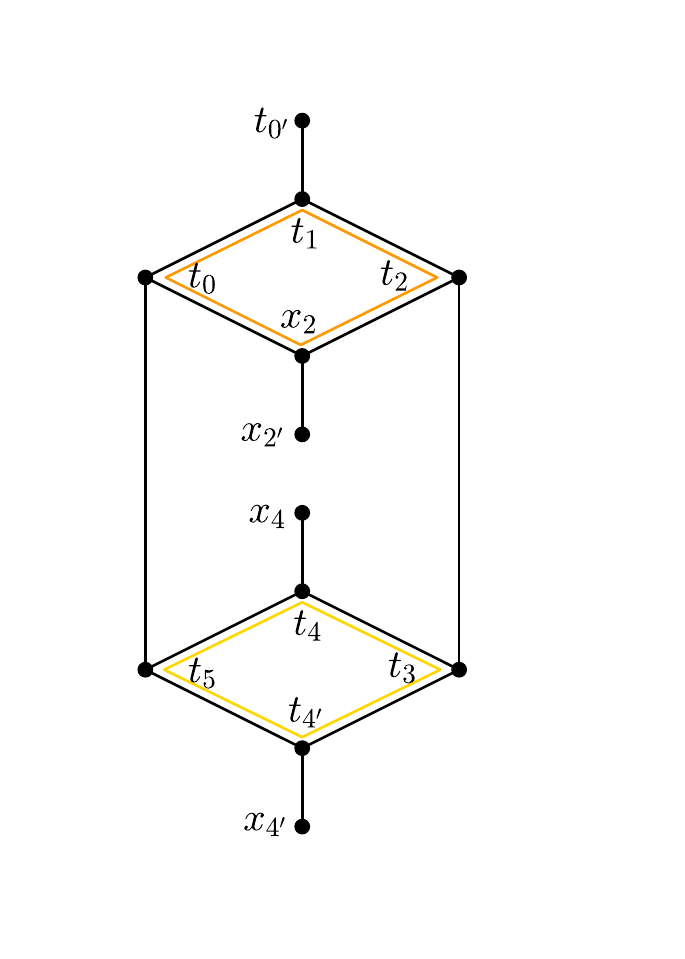} &
\includegraphics[height=42.8mm]{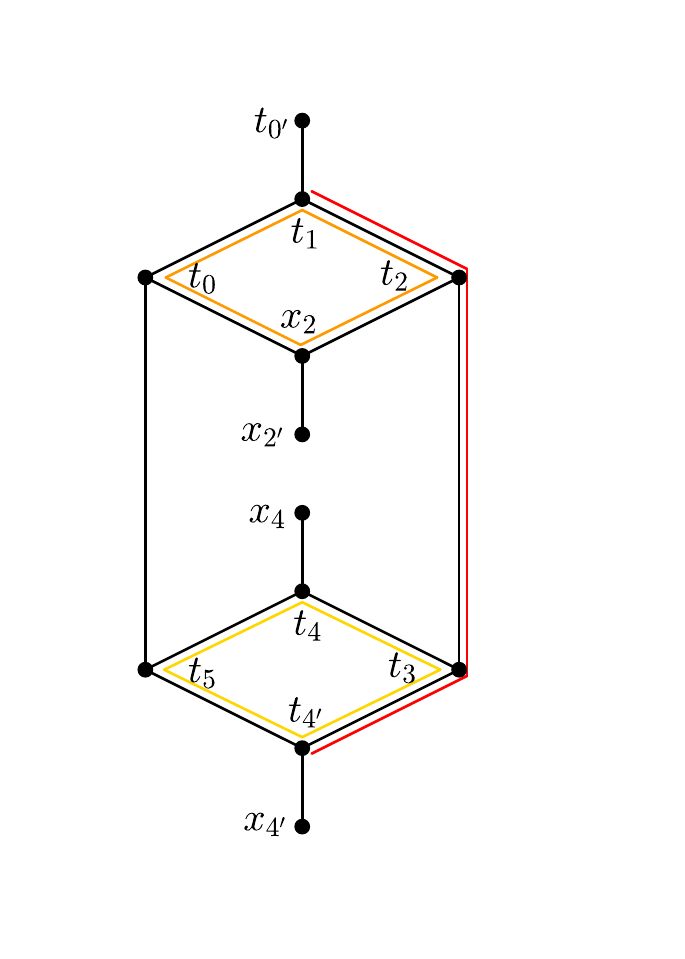} \\
\hline
Claim C1.4 &
Claim C1.5 &
Claim C1.5 \\
\includegraphics[height=42.8mm]{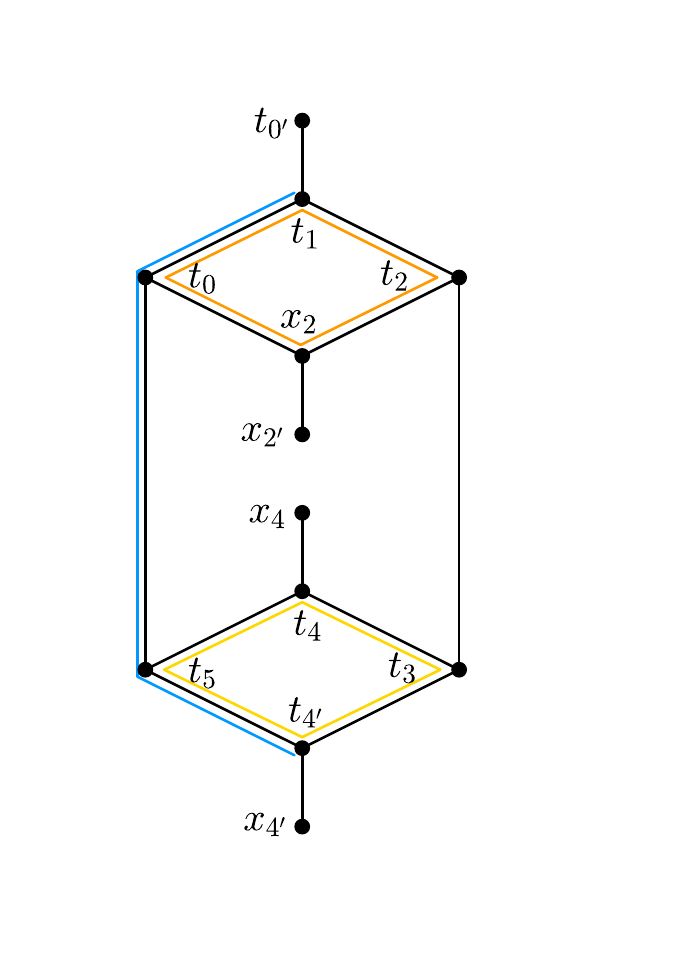} &
\includegraphics[height=42.8mm]{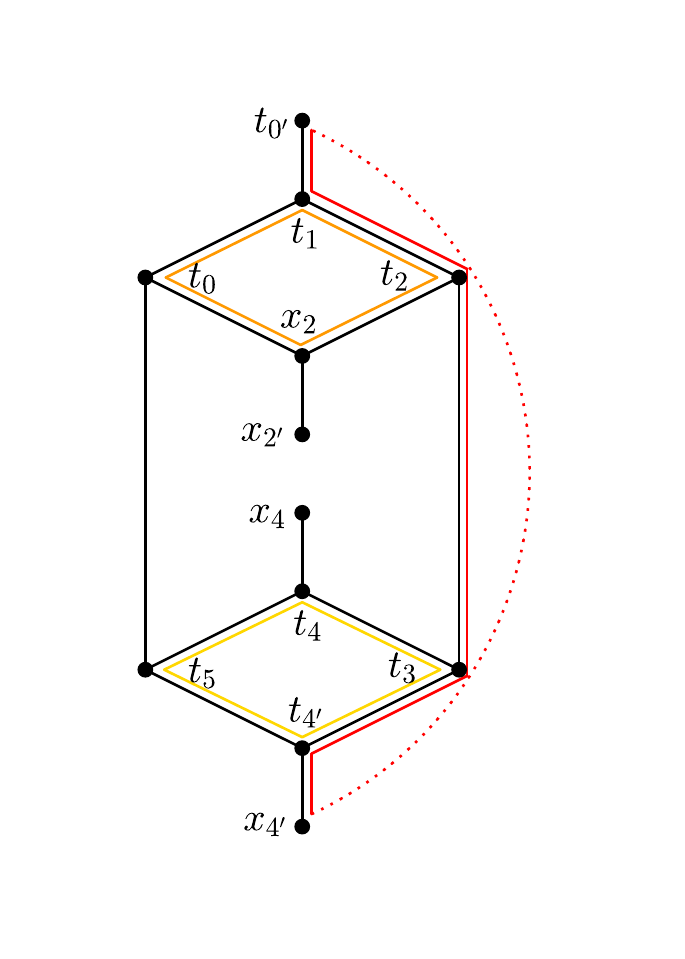} &
\includegraphics[height=42.8mm]{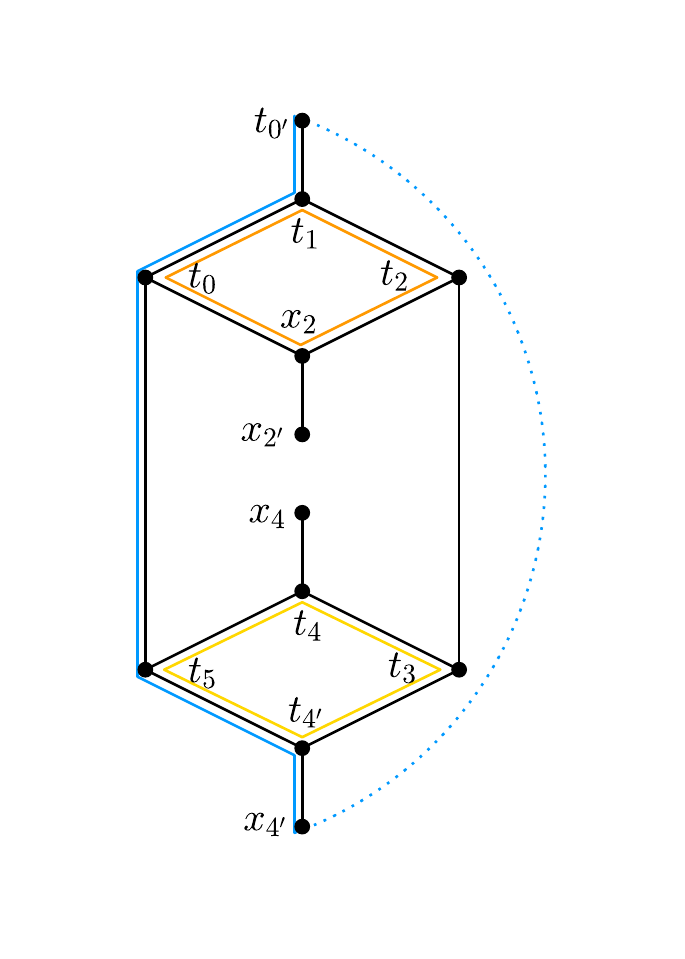} \\
\hline
Claim C1.6 &
Claim C1.6.1 &
Claim C1.6.2 \\
\includegraphics[height=42.8mm]{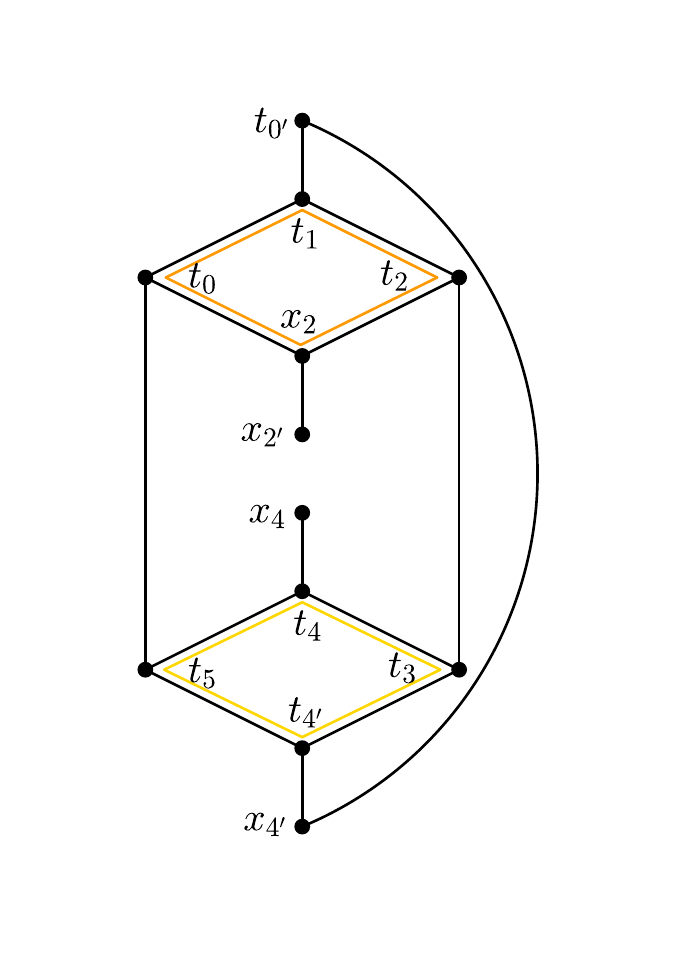} &
\includegraphics[height=42.8mm]{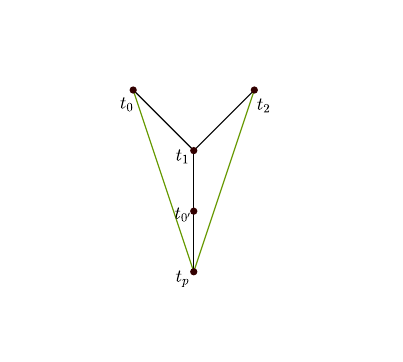} &
\includegraphics[height=42.8mm]{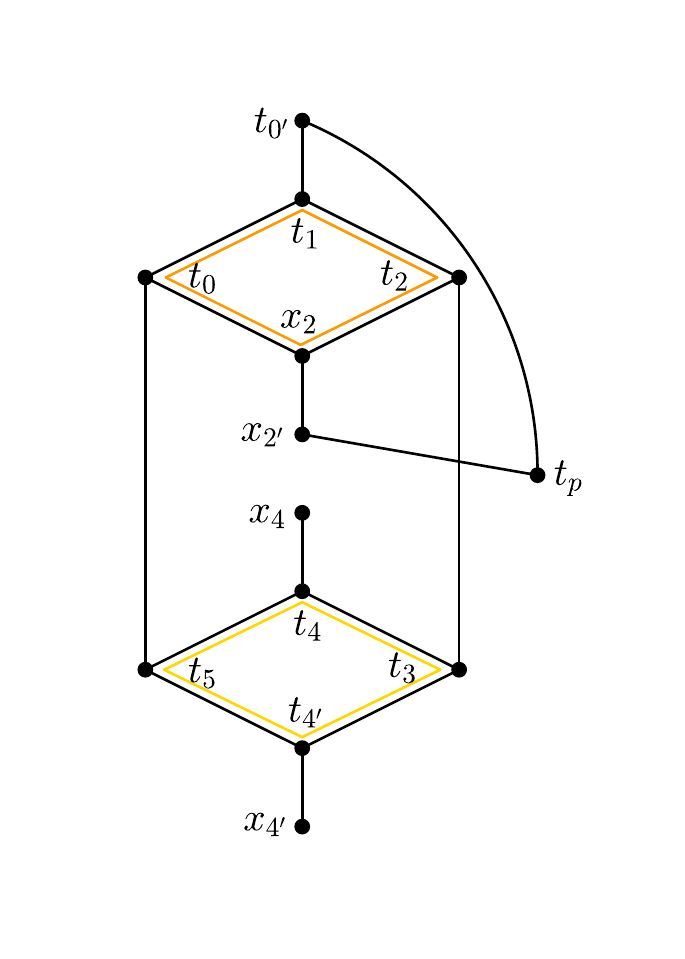} \\
\hline
Claim C1.6.3 &
Case C2 & 
Case C3. \\
\includegraphics[height=42.5mm]{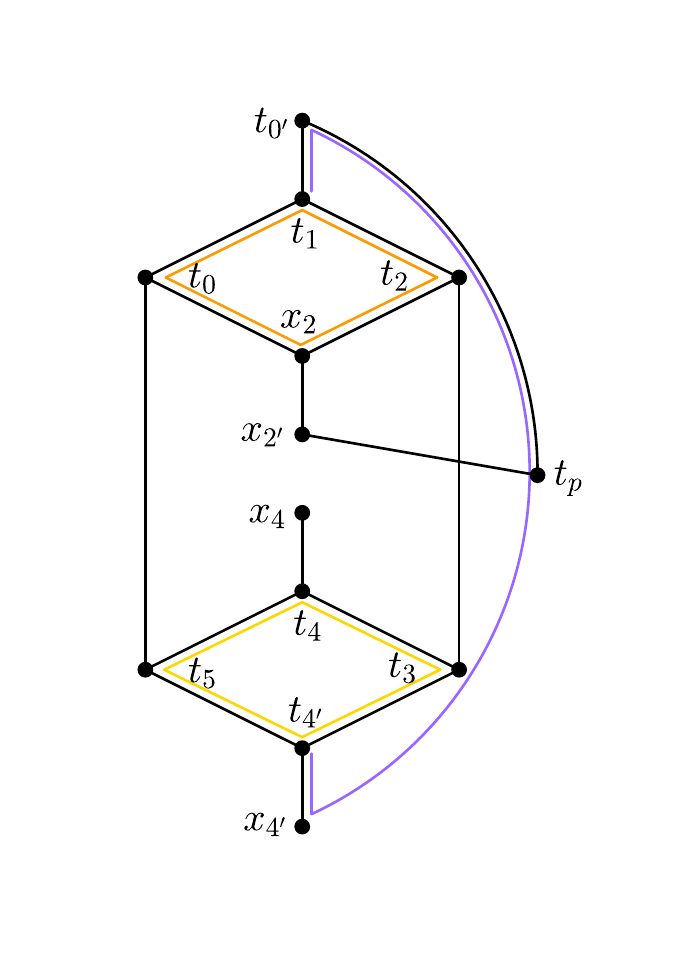} &
\includegraphics[height=42.5mm]{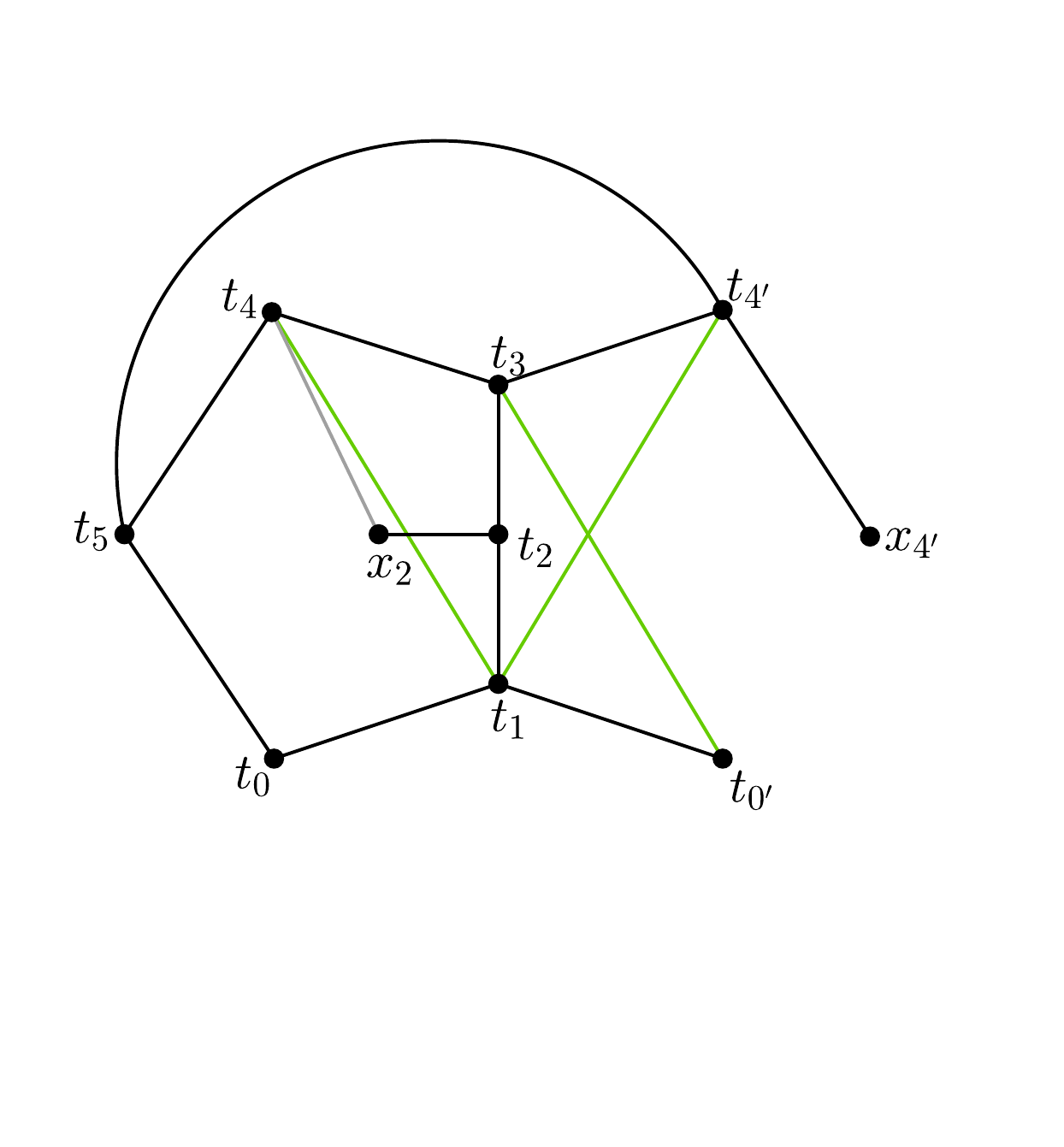} &
\includegraphics[height=42.5mm]{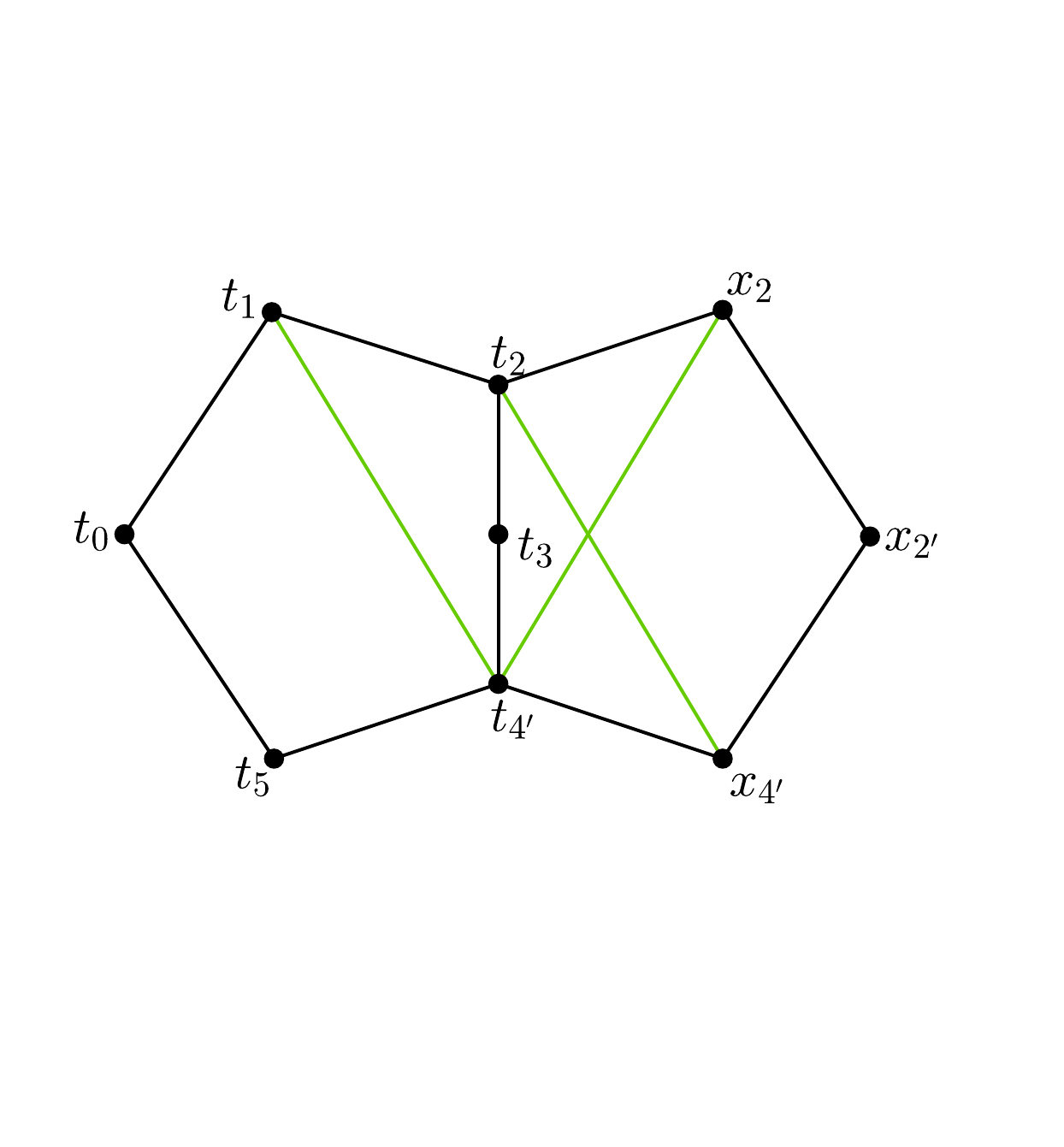}  \\
\hline
\end{tabular}
\end{table}

\clearpage

\section*{Theorem 16}
\vspace{1cm}
\begin{table}[h]
\centering
\begin{tabular}{| m{3.25cm}  m{3.25cm} | m{3.25cm}  m{3.25cm} |}
\hline
Claim 1. 
\hspace{-5mm}\includegraphics[width=43mm]{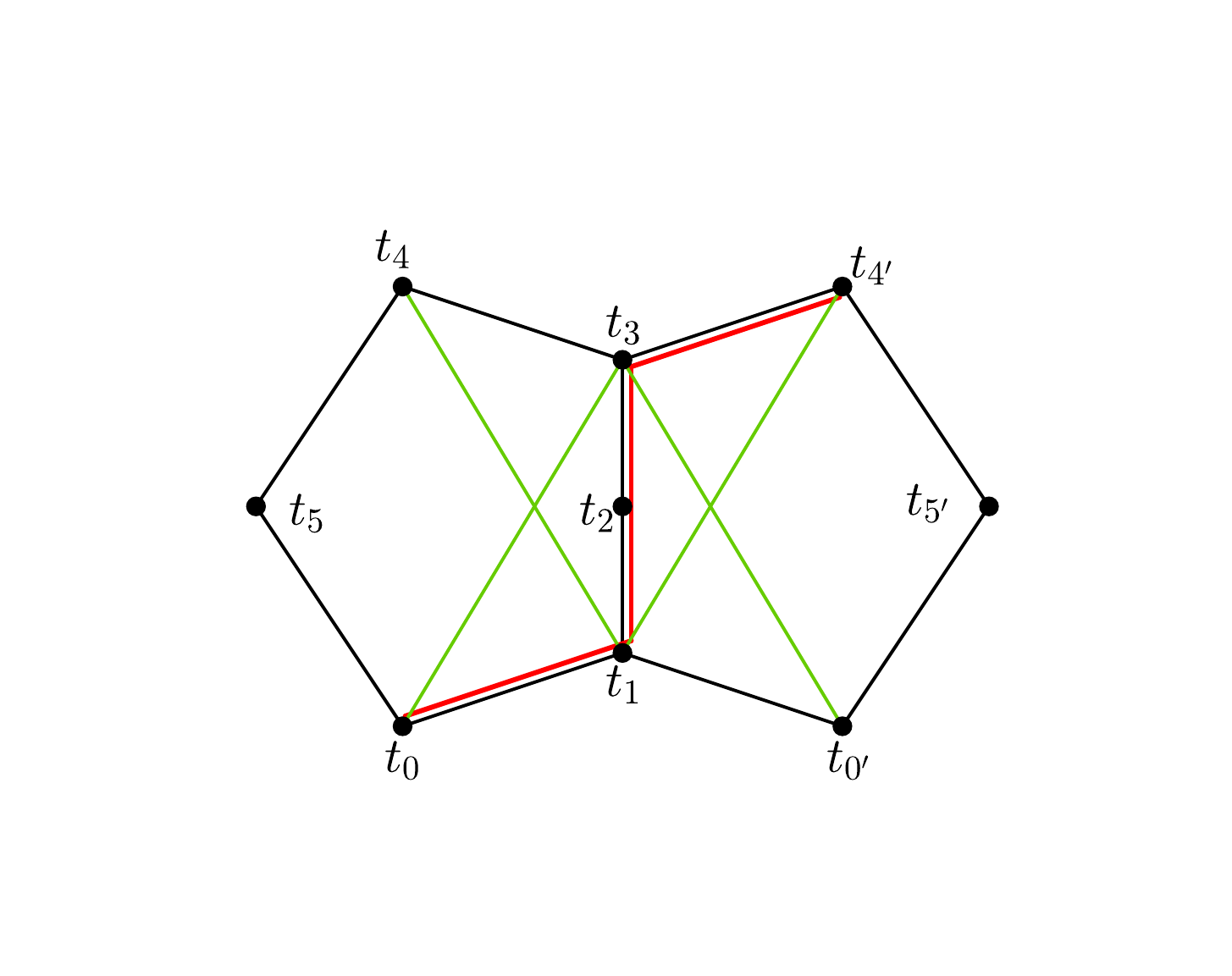} & \hspace{-5mm} \includegraphics[width=40mm]{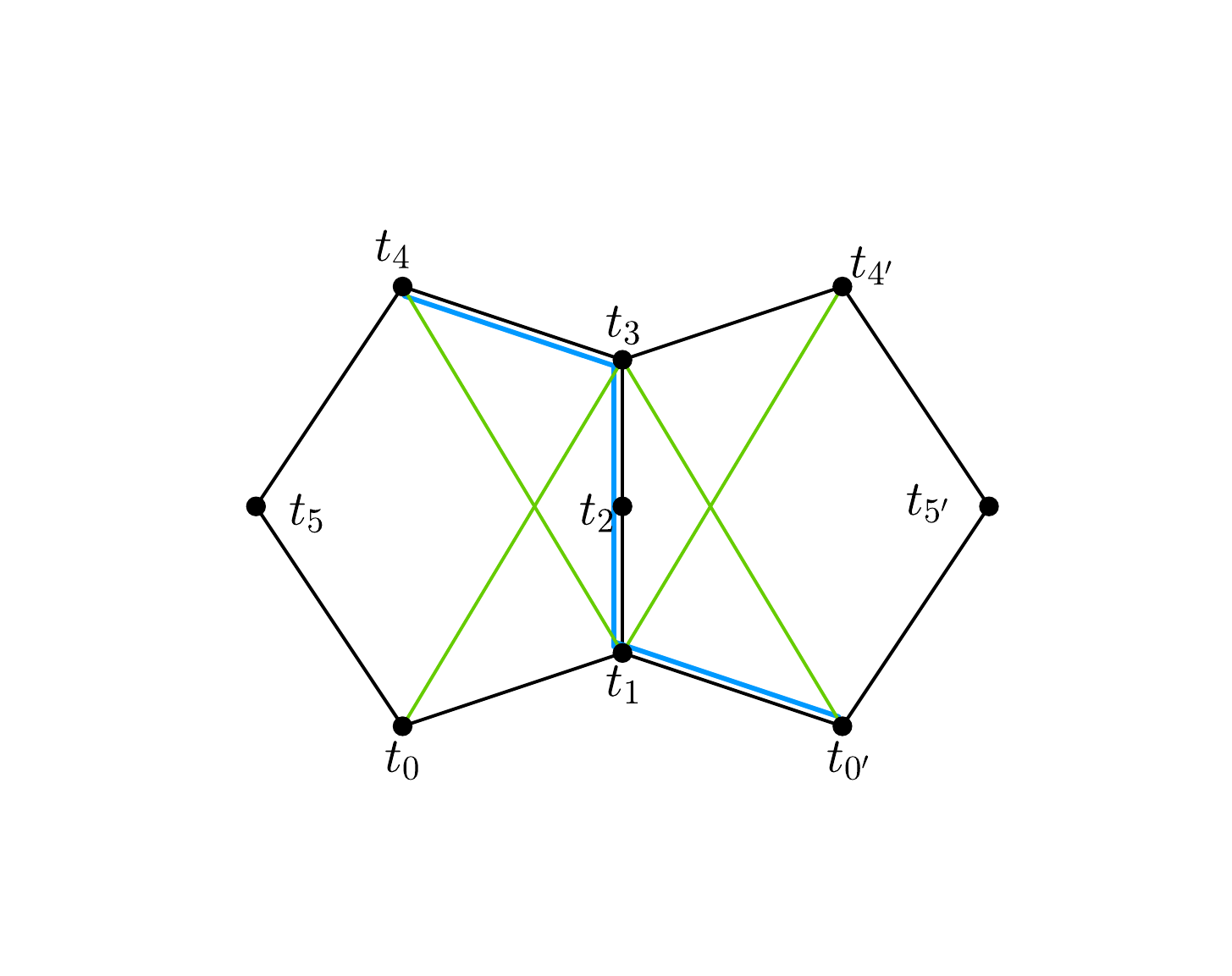} &
Claim 2. 
\hspace{-10mm}\includegraphics[width=40mm]{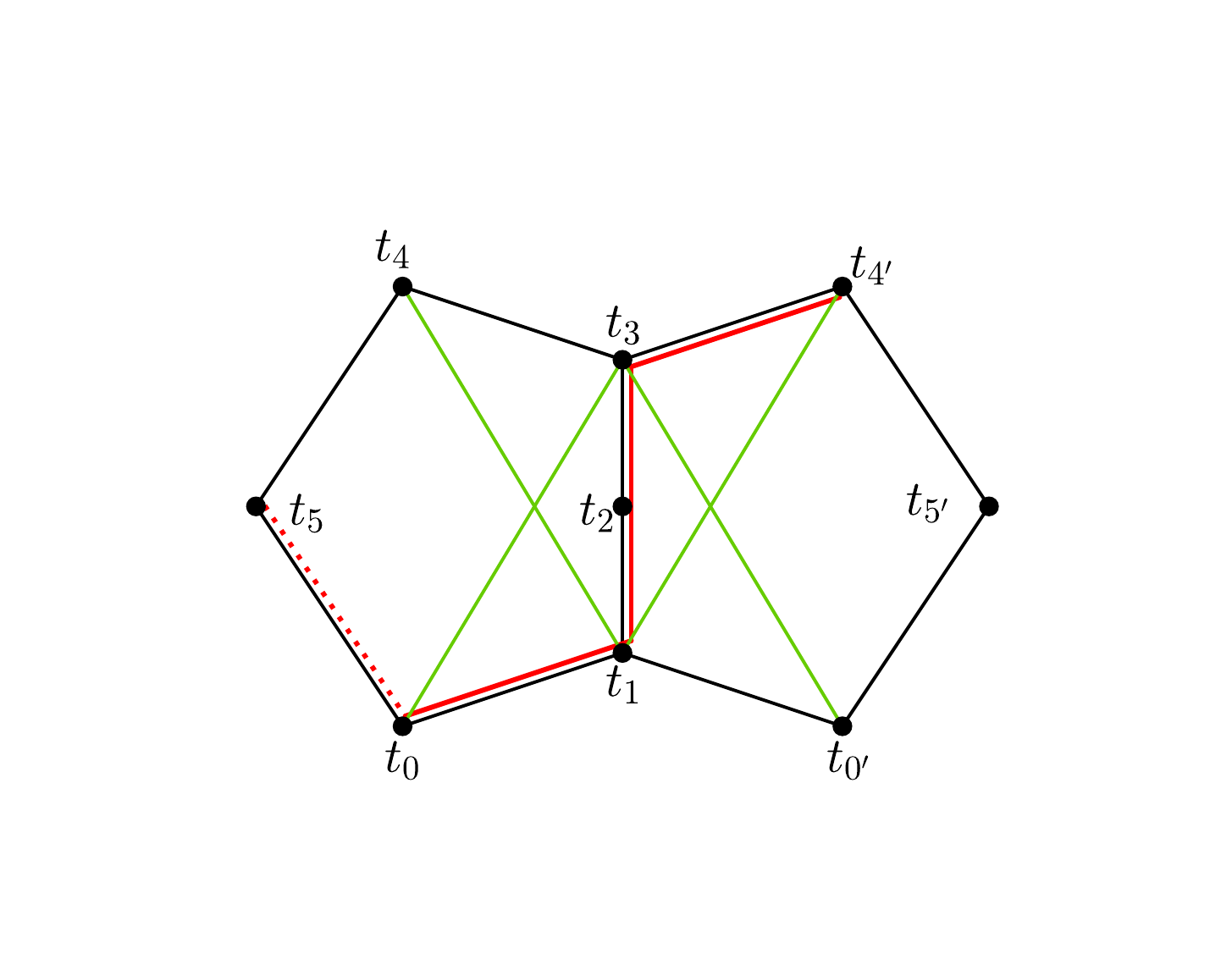} & \hspace{-5mm} \includegraphics[width=43mm]{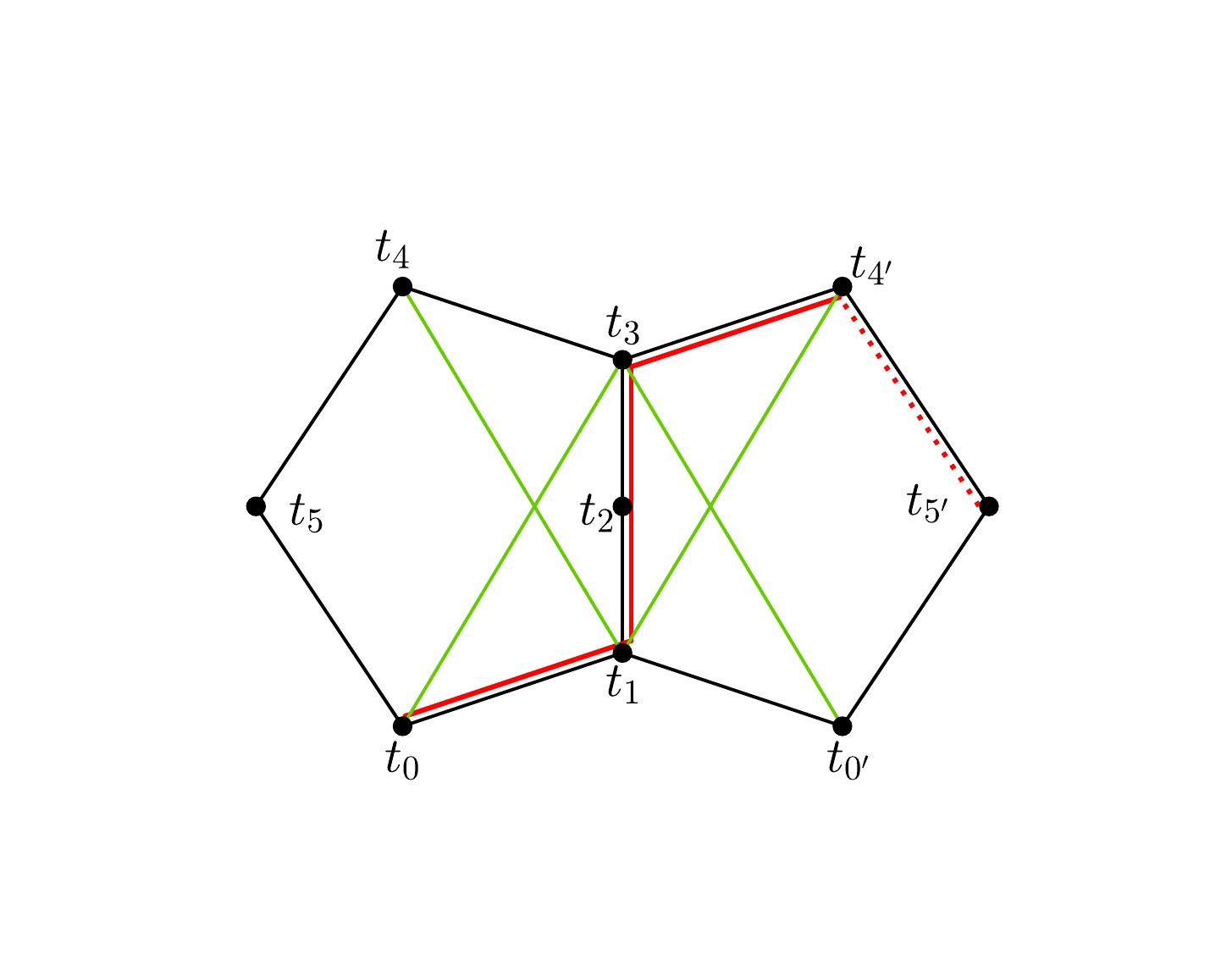} \\
\hline
Claim 3.
\vspace{1.52mm}\includegraphics[width=40mm]{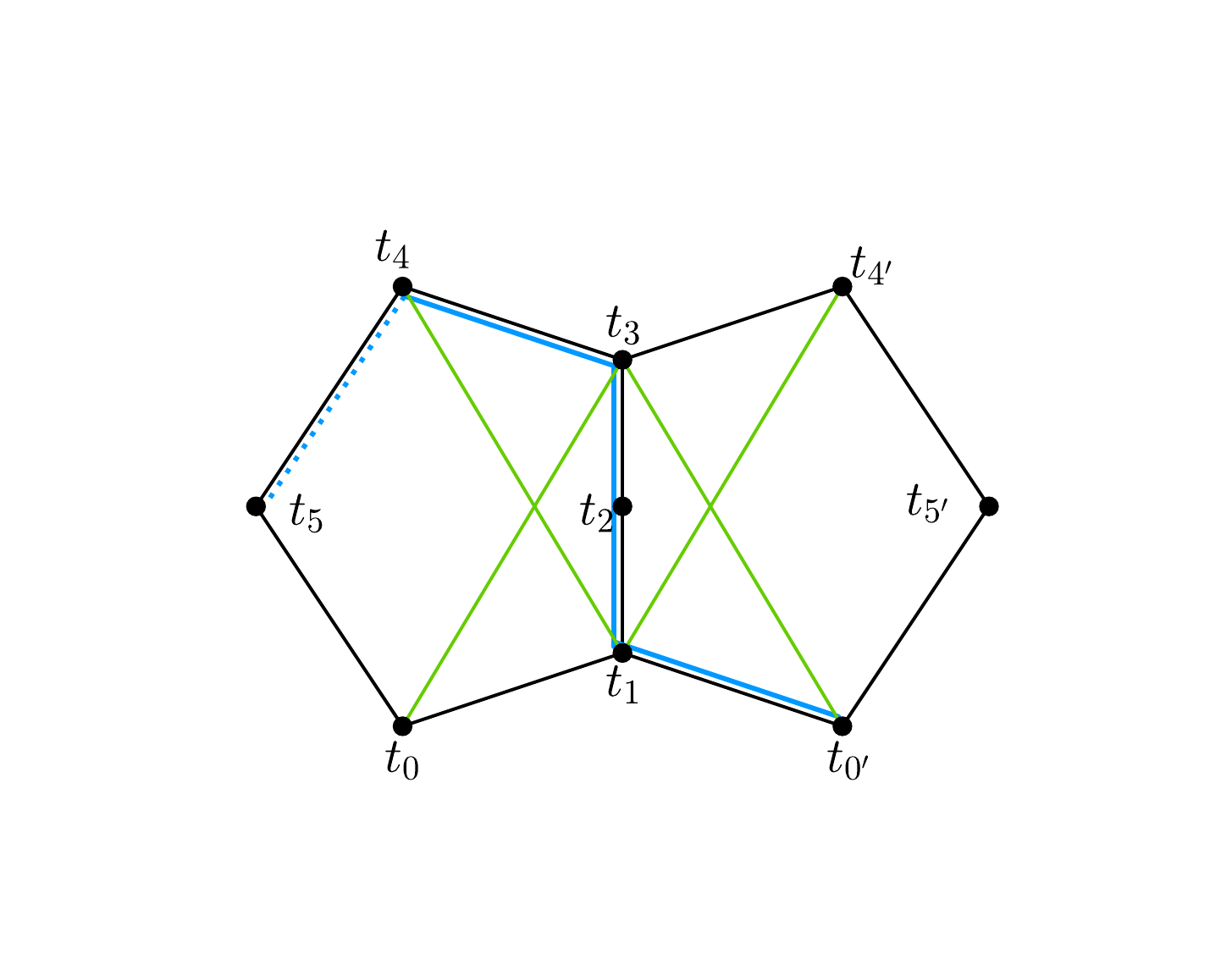} & \vspace{1.52mm} \includegraphics[width=40mm]{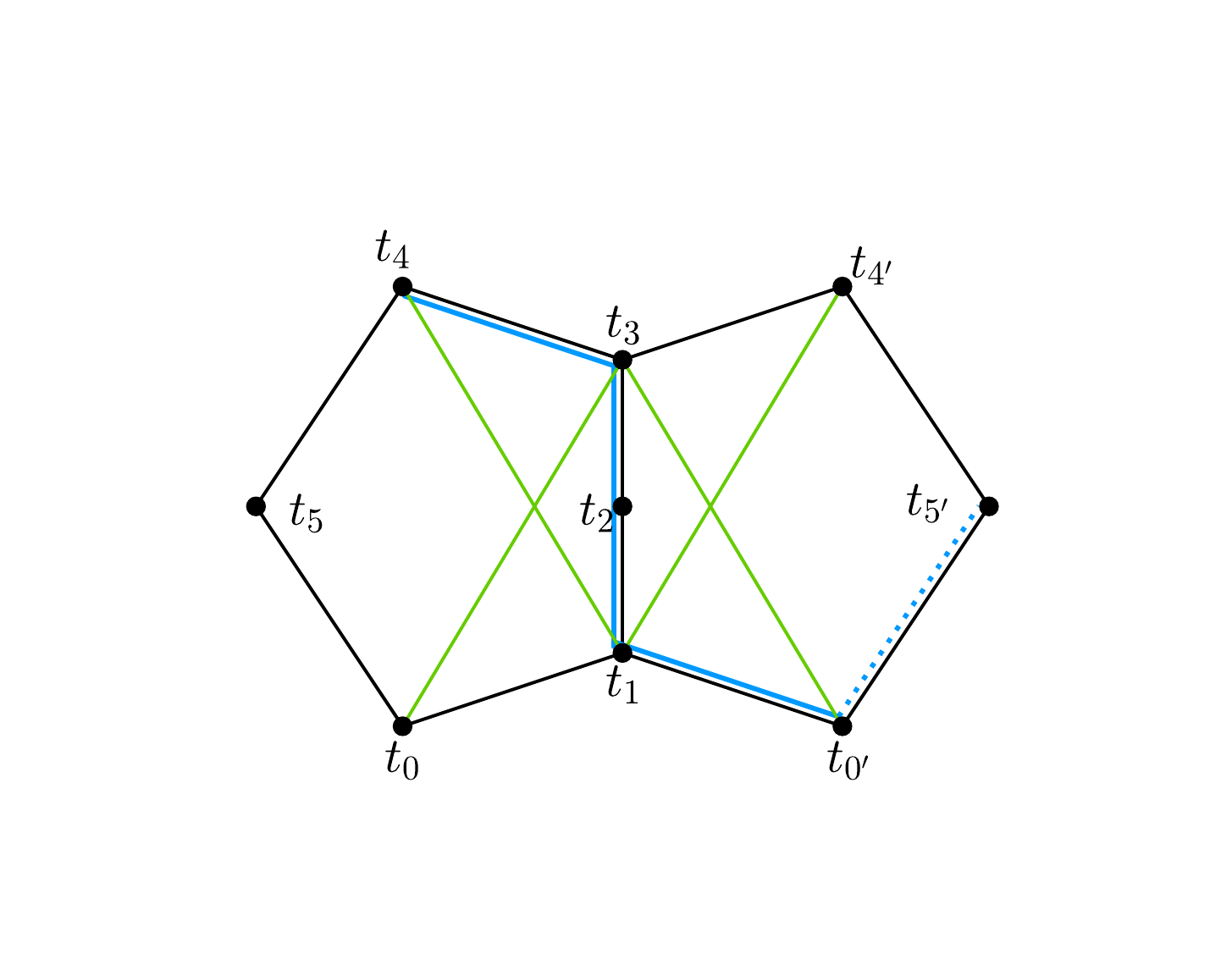} &
Claim 4. 
\vspace{1.52mm}\includegraphics[width=40mm]{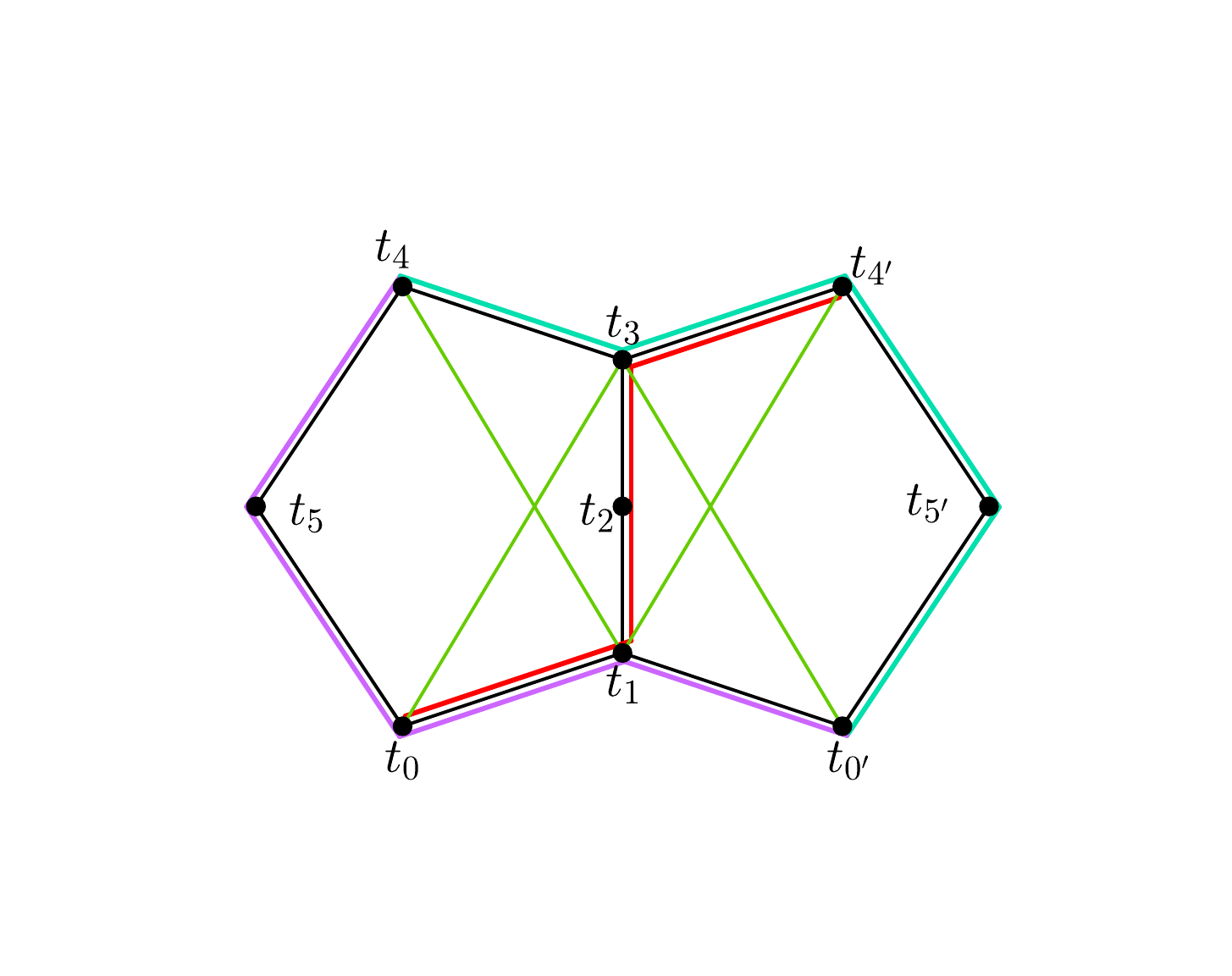} & \vspace{1.52mm} \includegraphics[width=40mm]{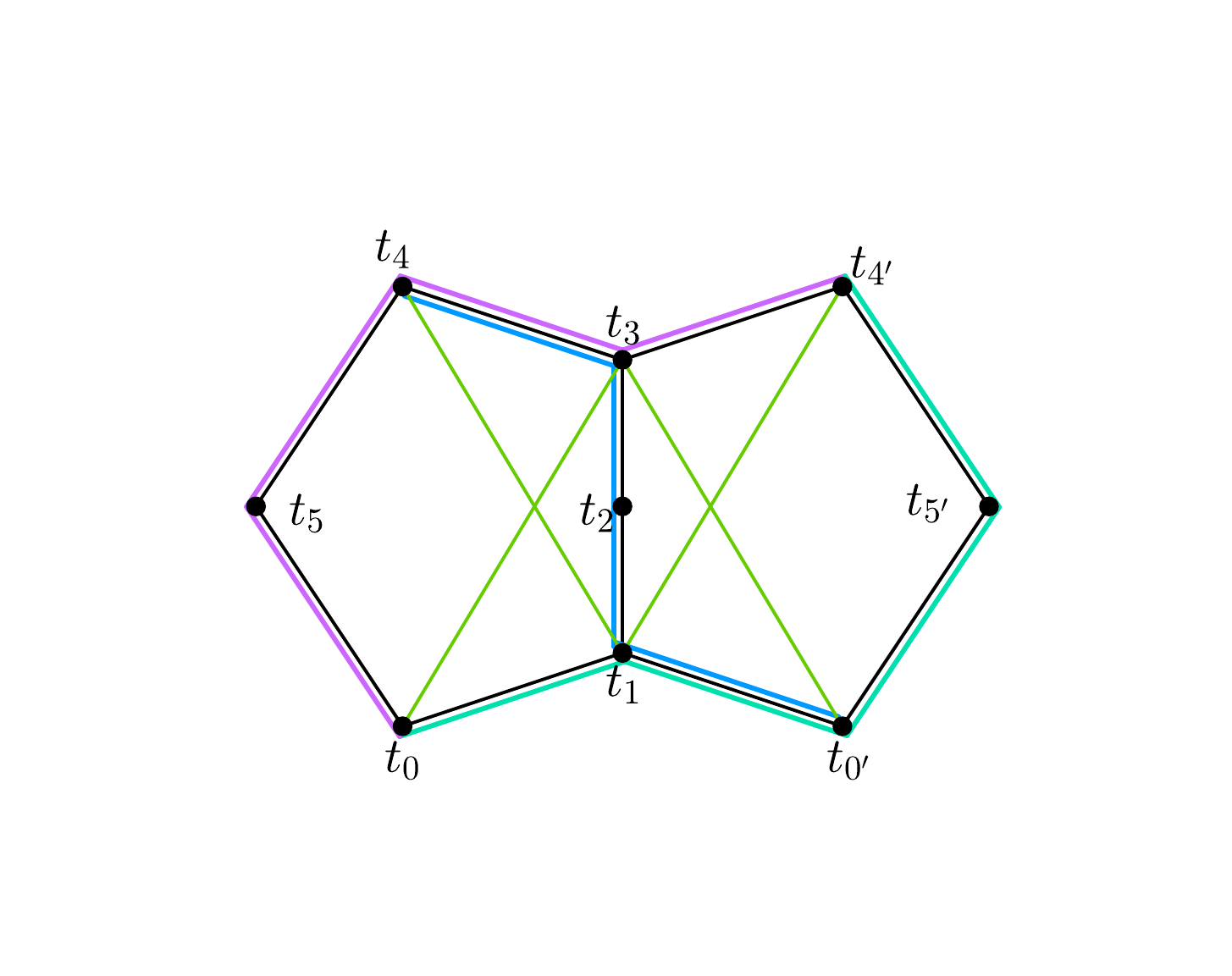} \\
\hline
\multicolumn{2}{| m{6.5cm} |}{Claim 5.
\includegraphics[scale=0.38]{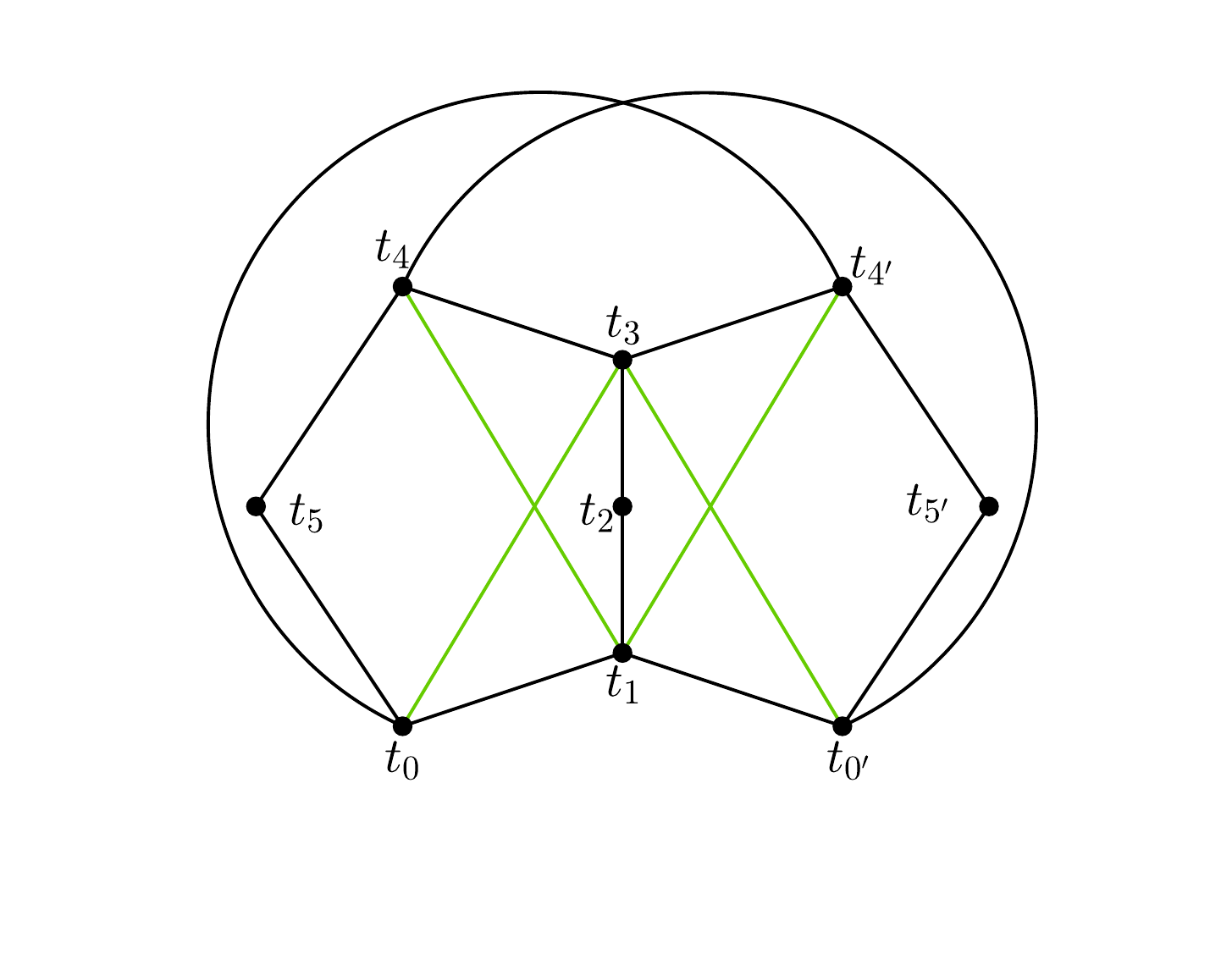}} &
\multicolumn{2}{| m{6.5cm} |}{Claim 6.
\includegraphics[scale=0.38]{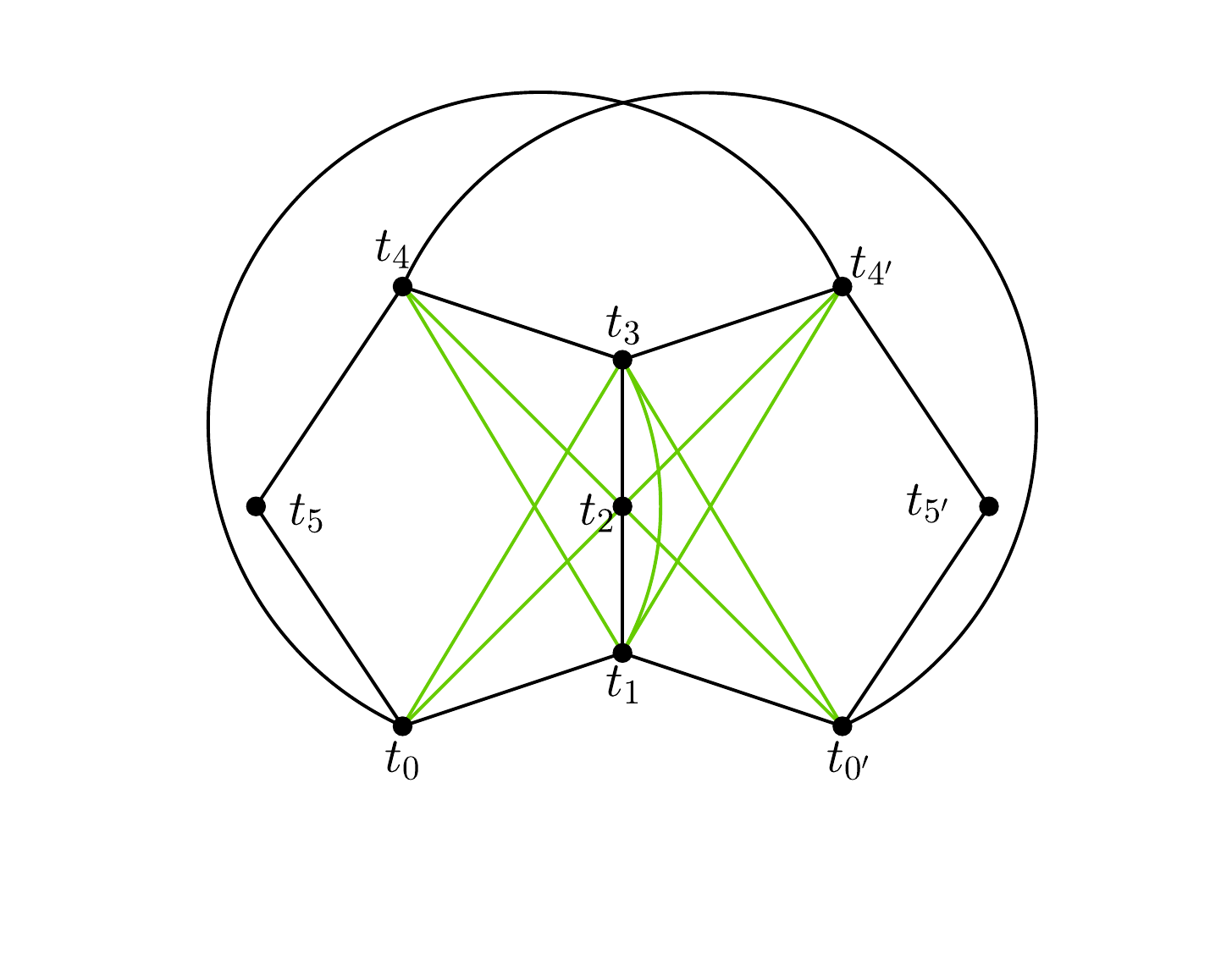}} \\
\hline
\multicolumn{2}{| m{6.5cm} |}{Claim 7.
\vspace{1.52mm}\includegraphics[scale=0.38]{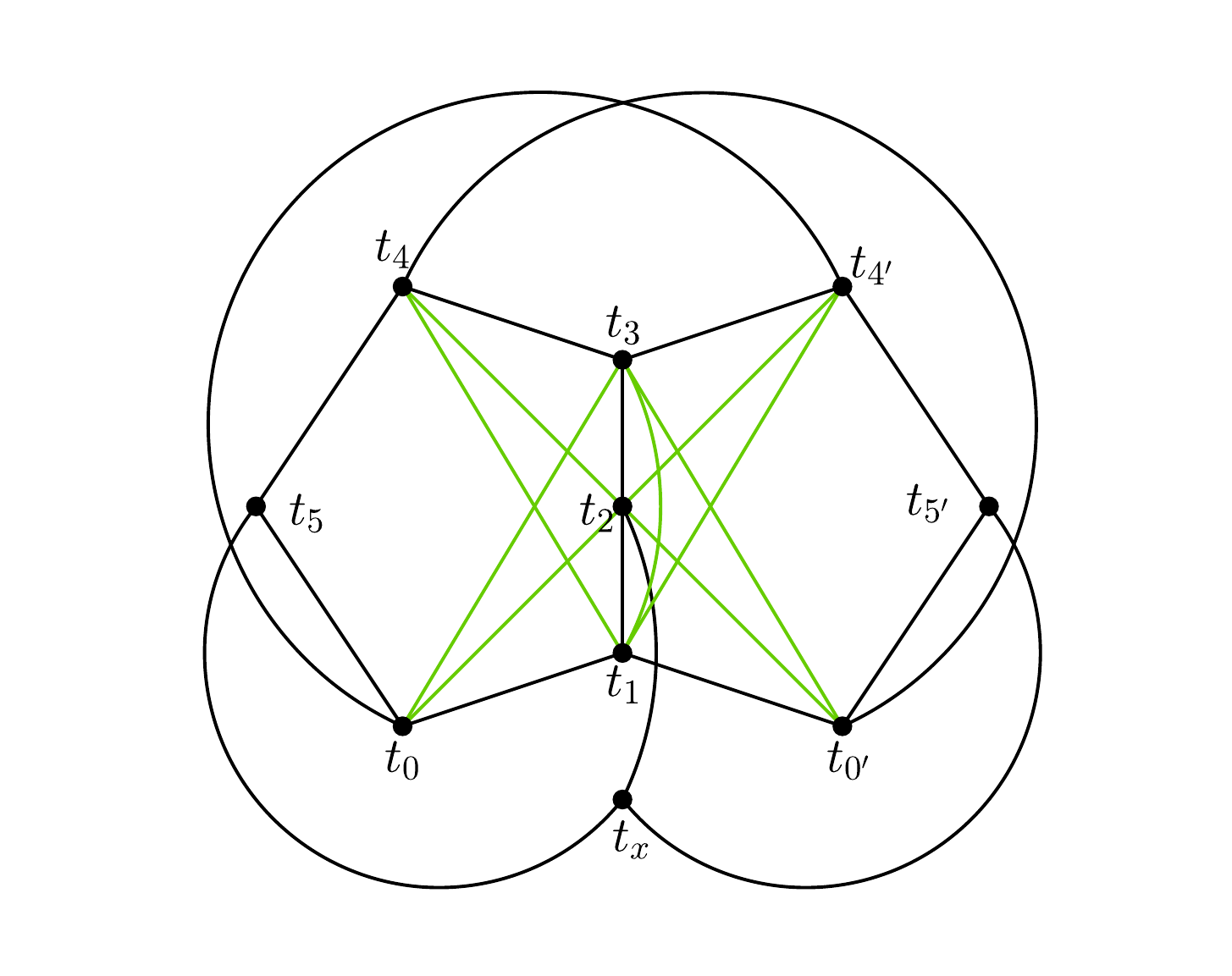}} &
\multicolumn{2}{| m{6.5cm} |}{Claim 8. \vspace{1.52mm}\includegraphics[scale=0.55]{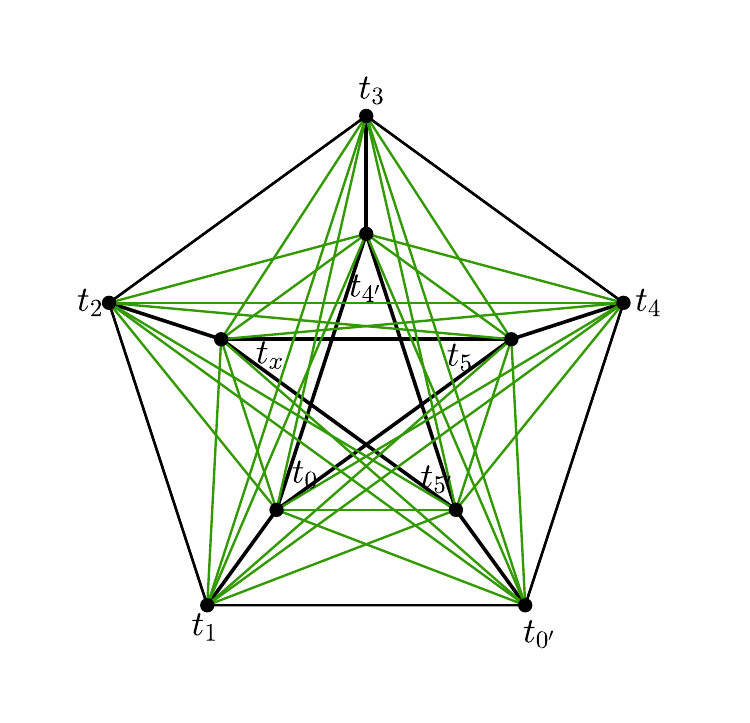}} \\
\hline
\end{tabular}
\end{table}

\clearpage

\section*{Theorem 17}
\begin{table}[h!]
\centering
\begin{tabular}{| c | c | c |}
\hline
Claim 1. &
Claim 2. &
Claim 3. \\
\includegraphics[scale=0.35]{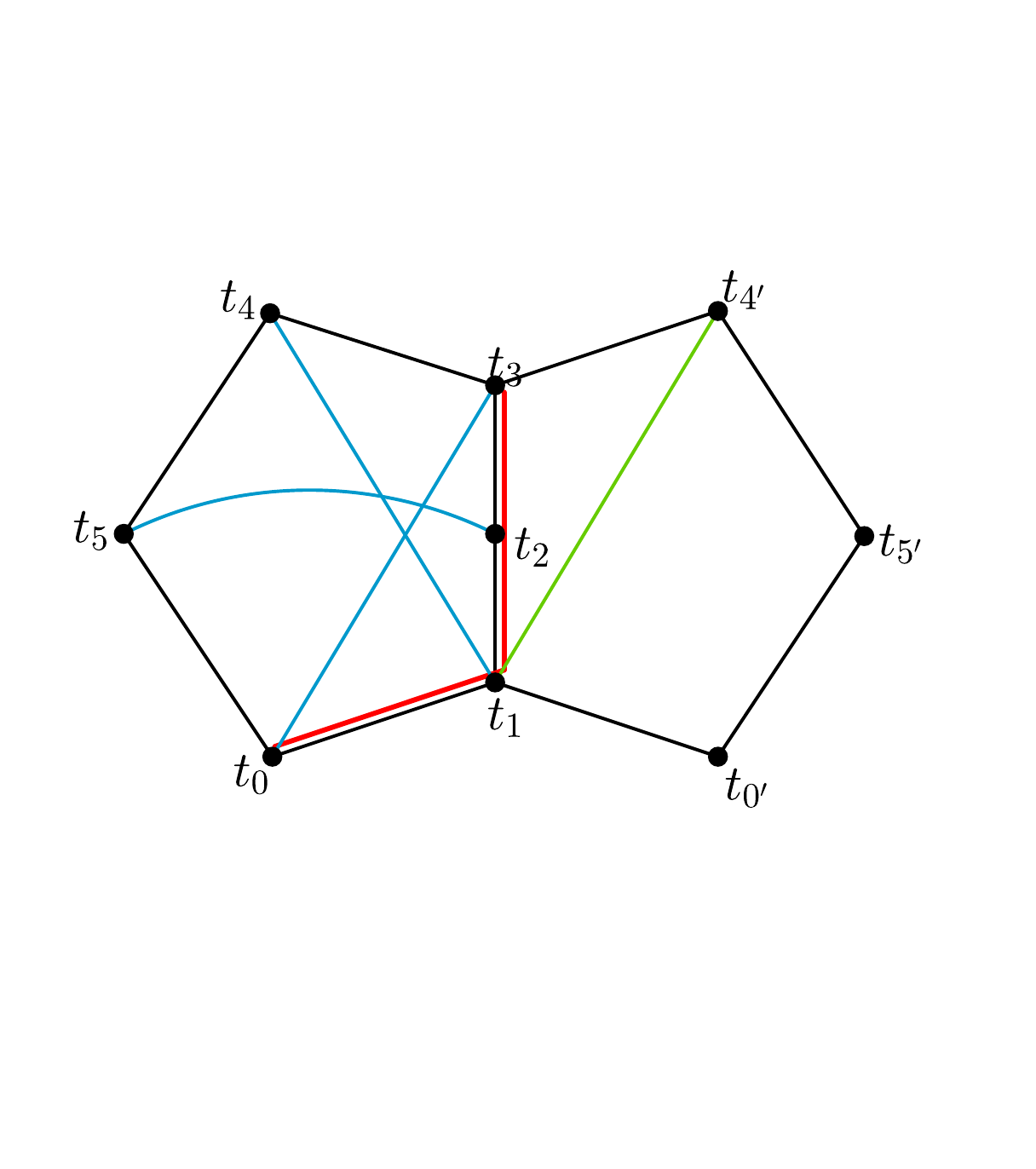} & 
\includegraphics[scale=0.35]{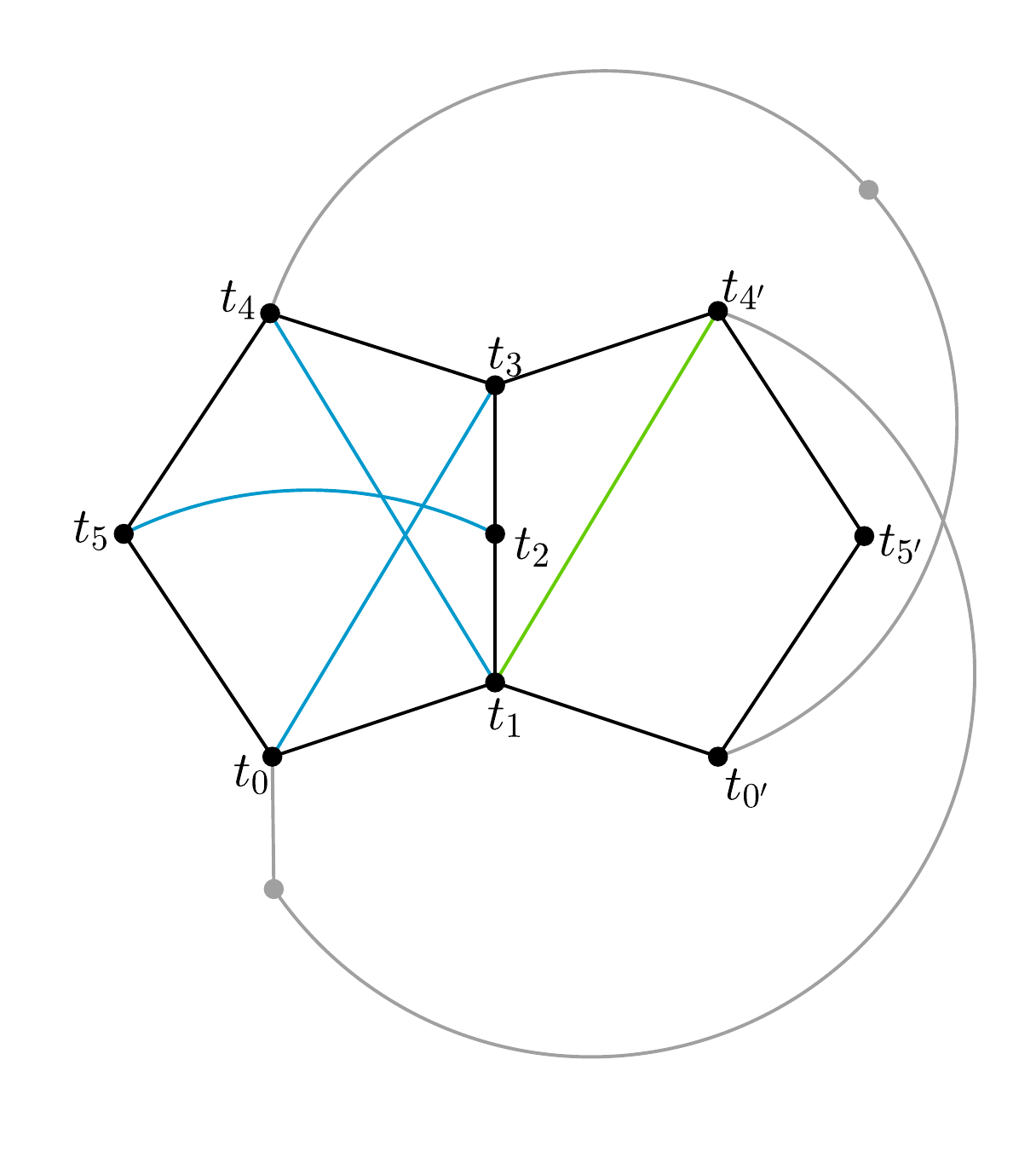} &
\includegraphics[scale=0.35]{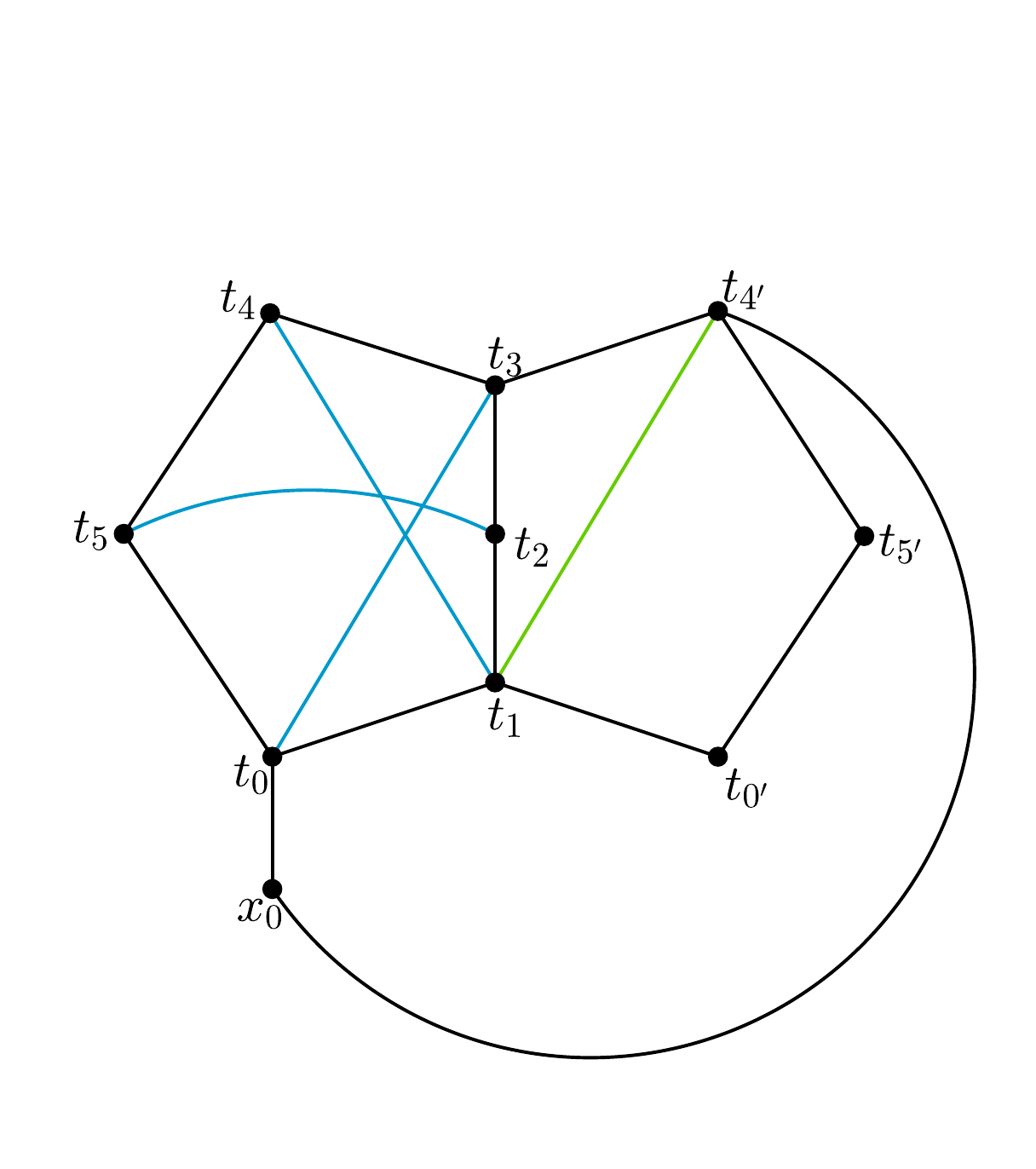} \\
\hline
Claim 4. &
Claim 5. &
Claim 6. \\
\includegraphics[scale=0.35]{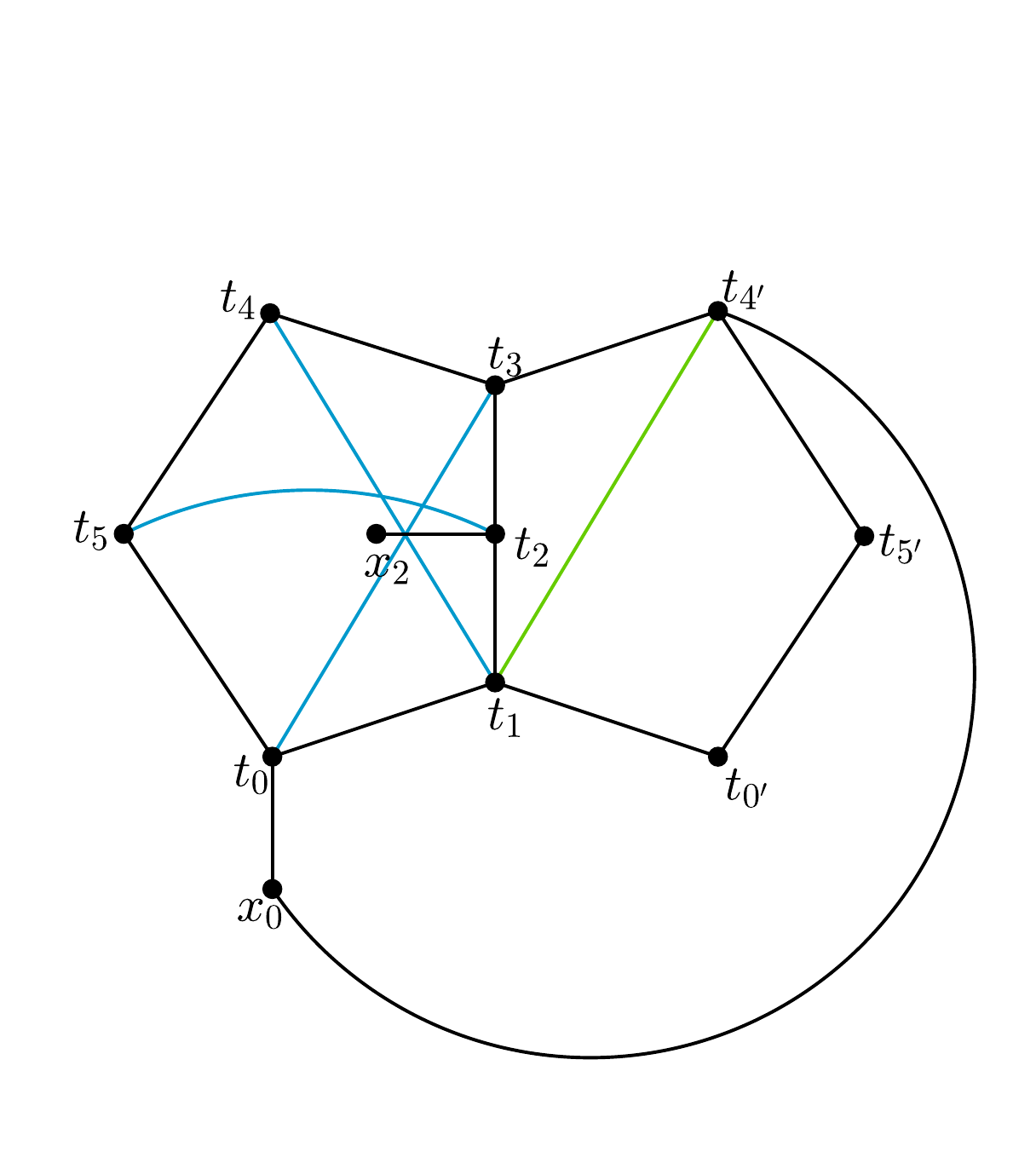} &
\includegraphics[scale=0.35]{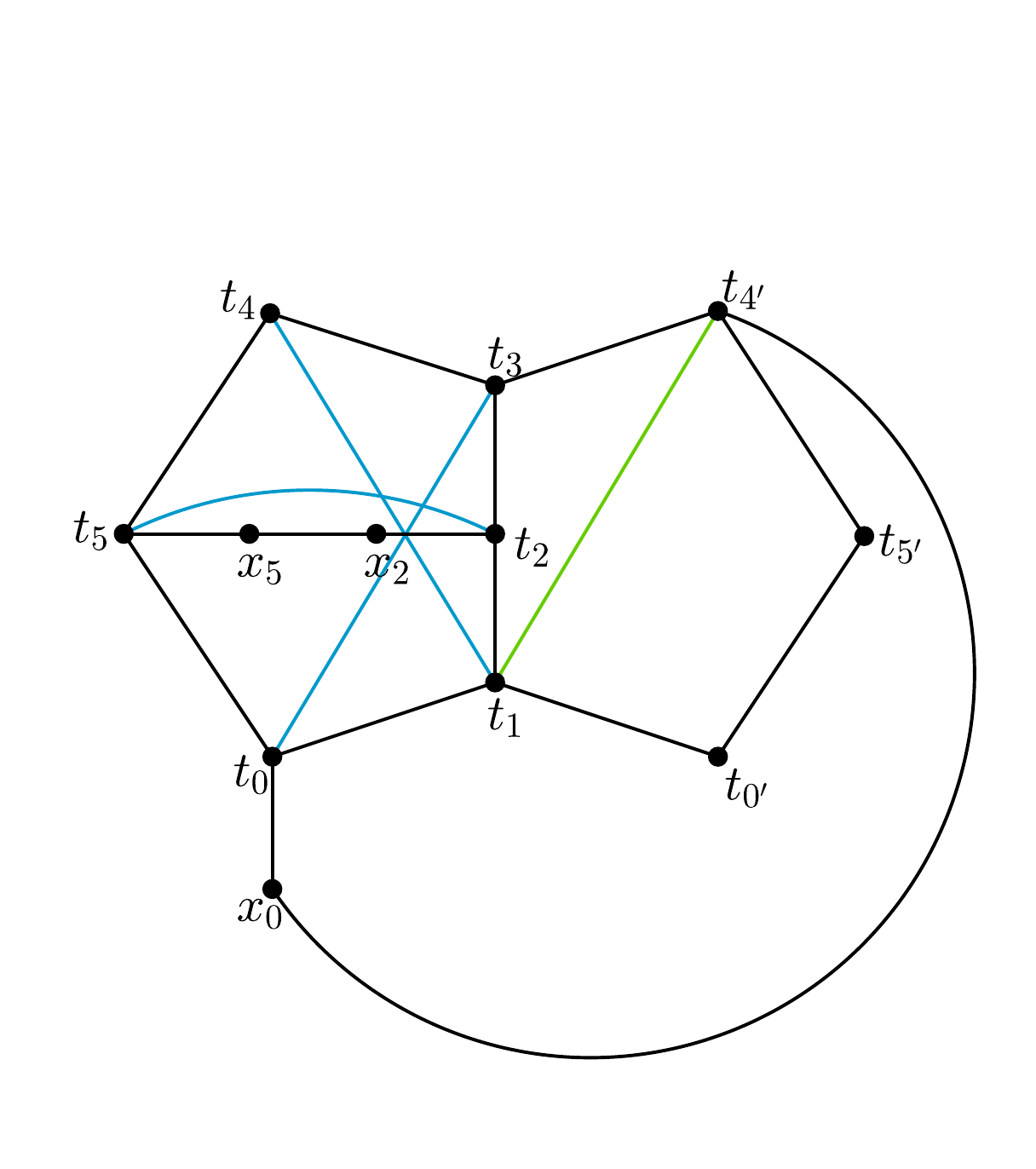} & 
\includegraphics[scale=0.35]{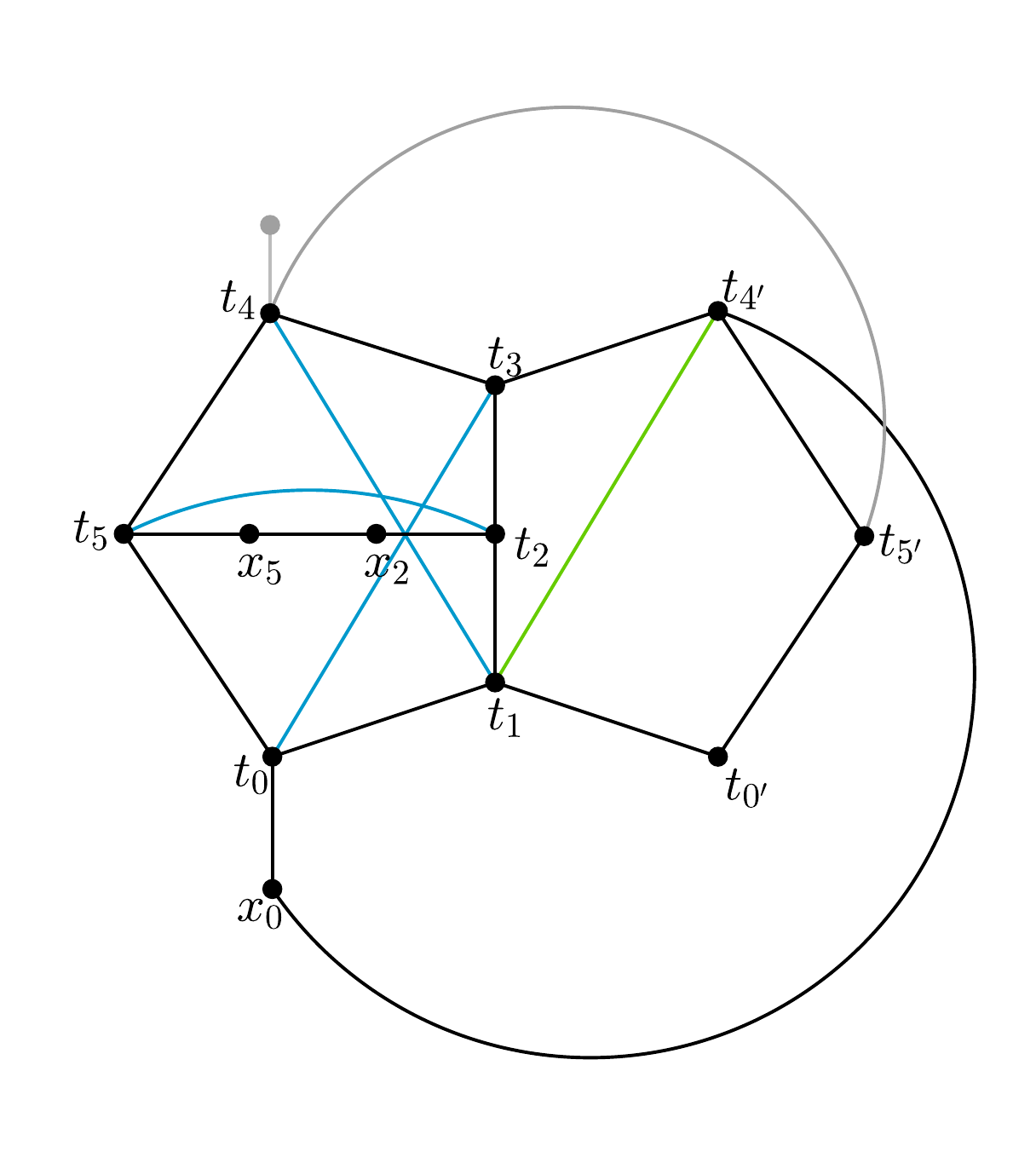} \\
\hline
Claim 7. &
Claim 8. &
Claim 9. \\
\includegraphics[scale=0.7]{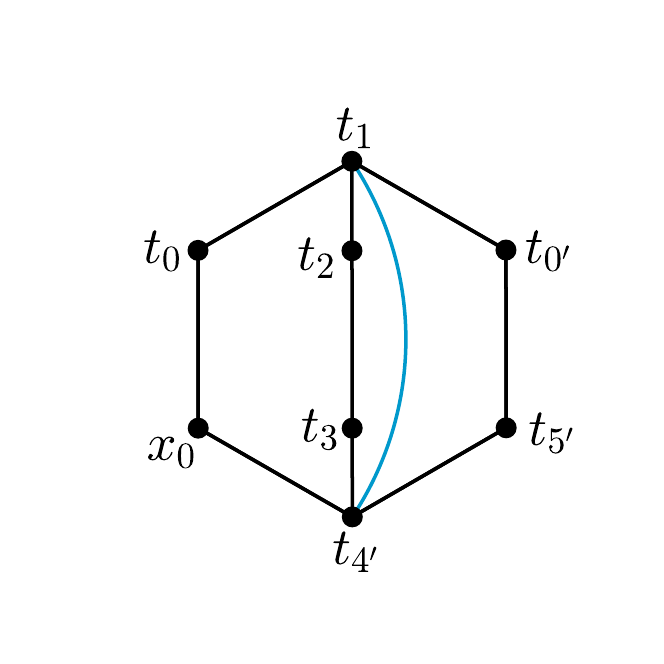} & 
\includegraphics[scale=0.7]{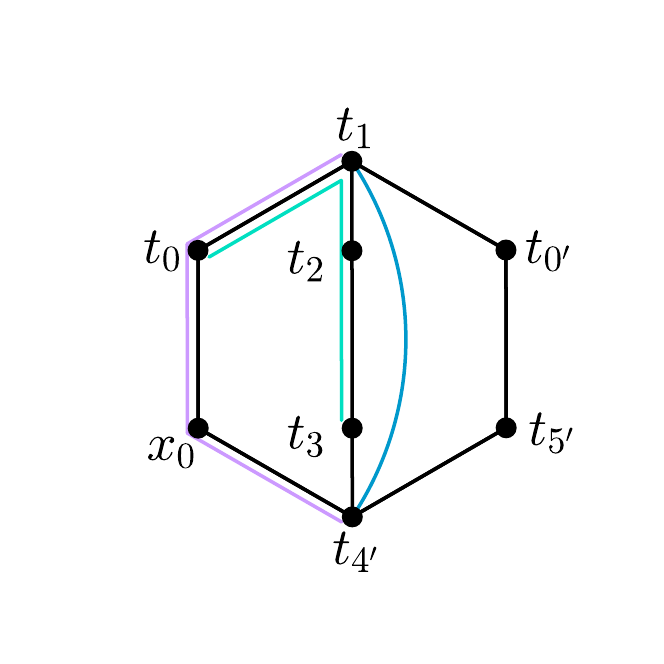} &
\includegraphics[scale=0.55]{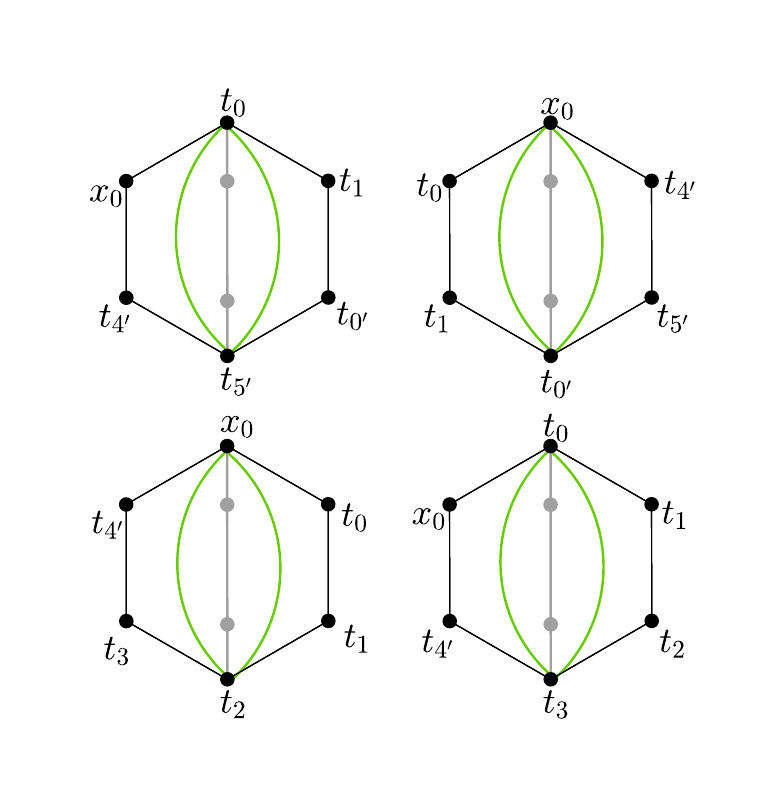} \\
\hline
\end{tabular}
\end{table}

\clearpage

\subsection*{Theorem 17}
\vspace{-0.5cm}
\begin{table}[h!]
\centering
\begin{tabular}{| c | c | c |}
\hline
Claim 10. &
Claim 11. &
Claim 12. \\
\includegraphics[scale=0.7]{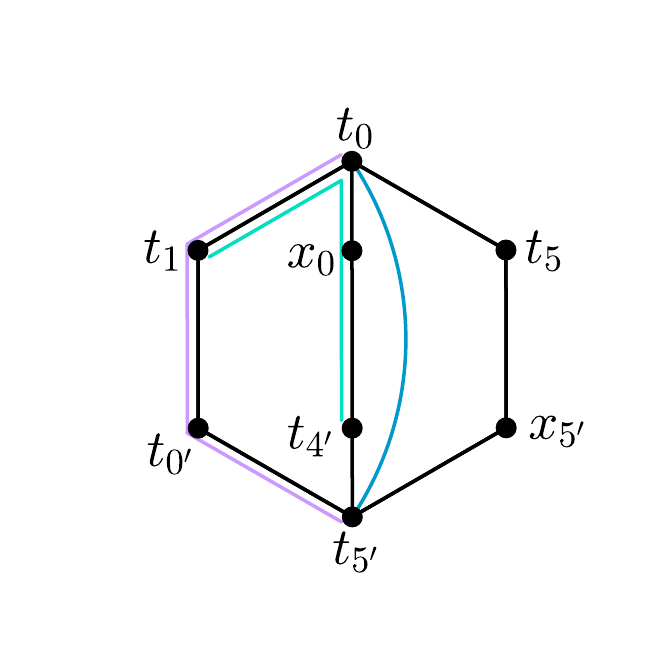} &
\includegraphics[scale=0.7]{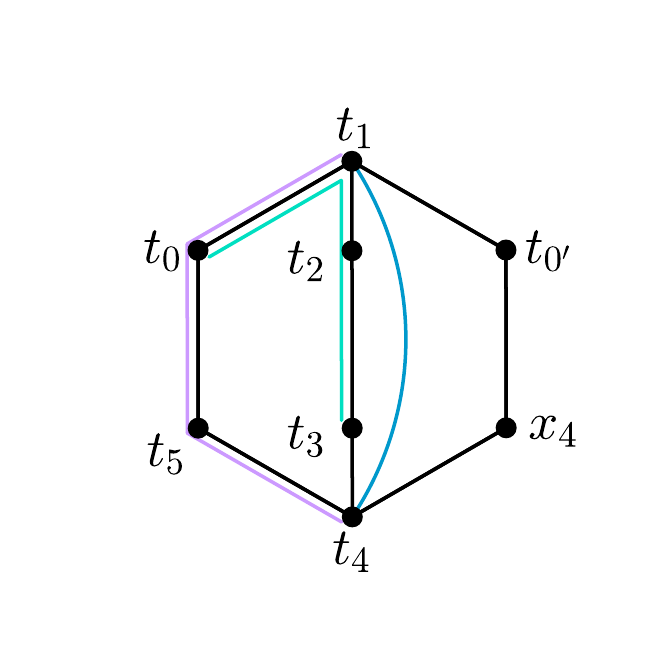} &
\includegraphics[scale=0.35]{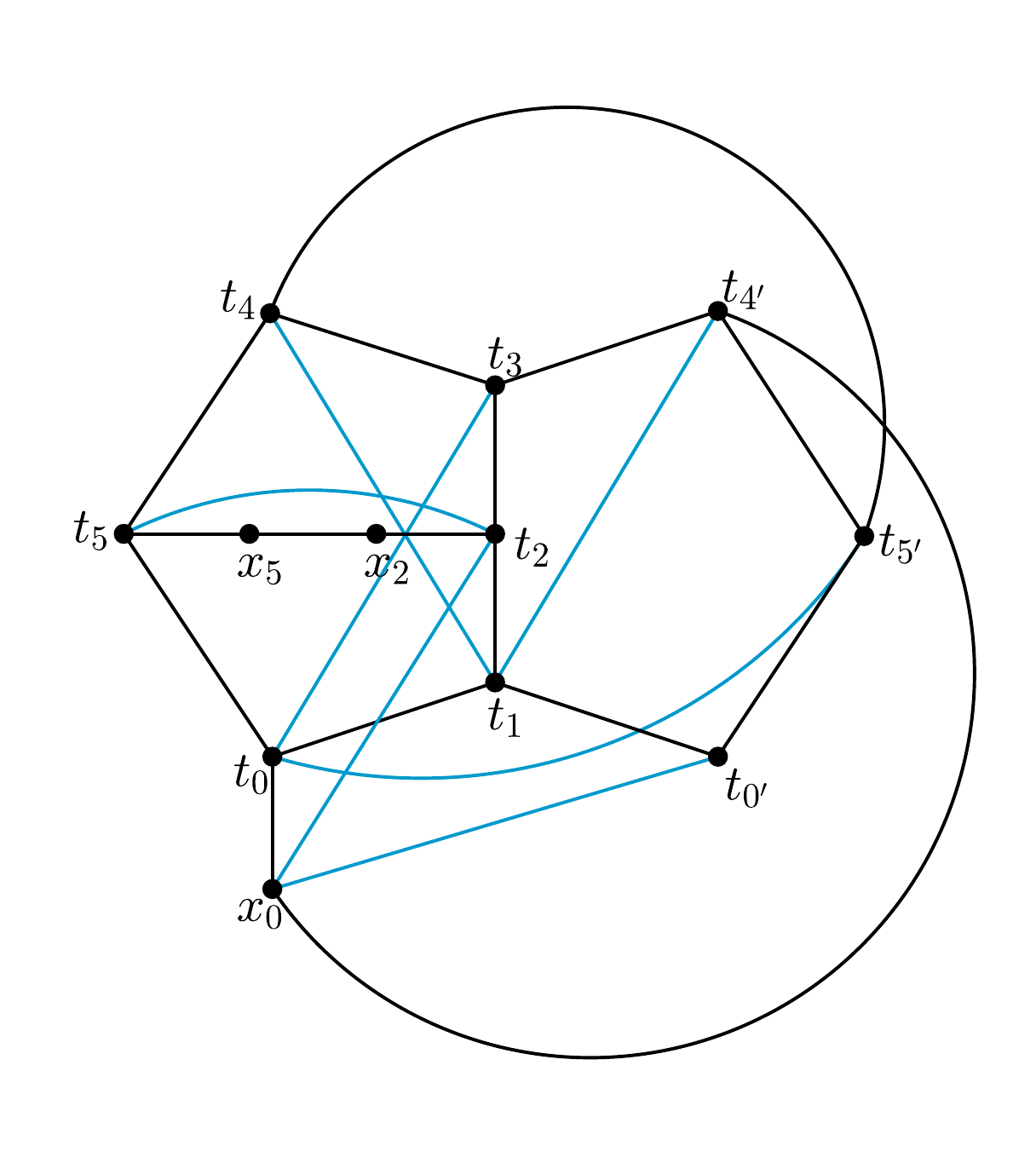} \\
\hline
Claim 13. &
Claim 14. &
Claim 15. \\
\includegraphics[scale=0.7]{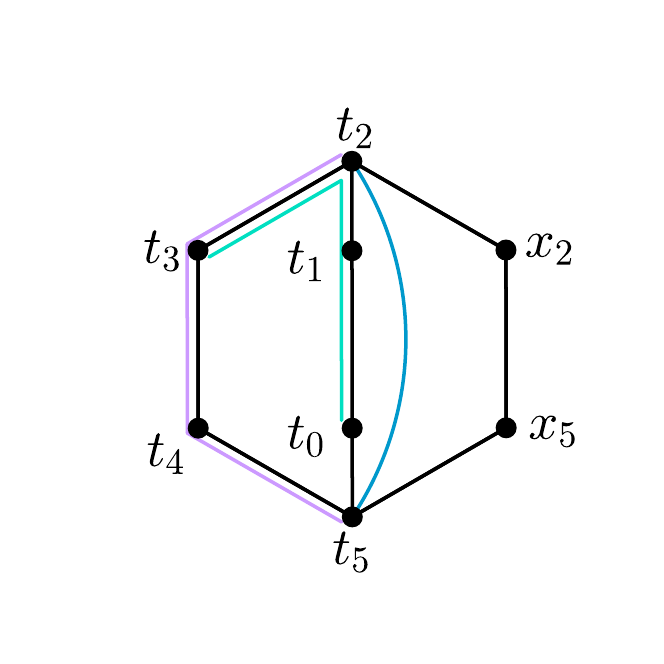} &
\includegraphics[scale=0.55]{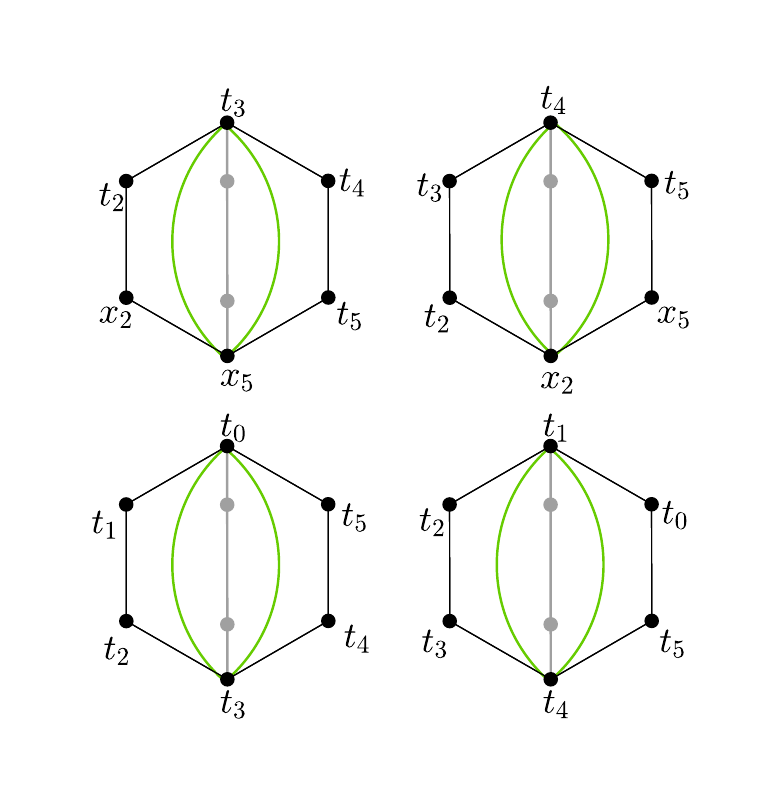} &
\includegraphics[scale=0.35]{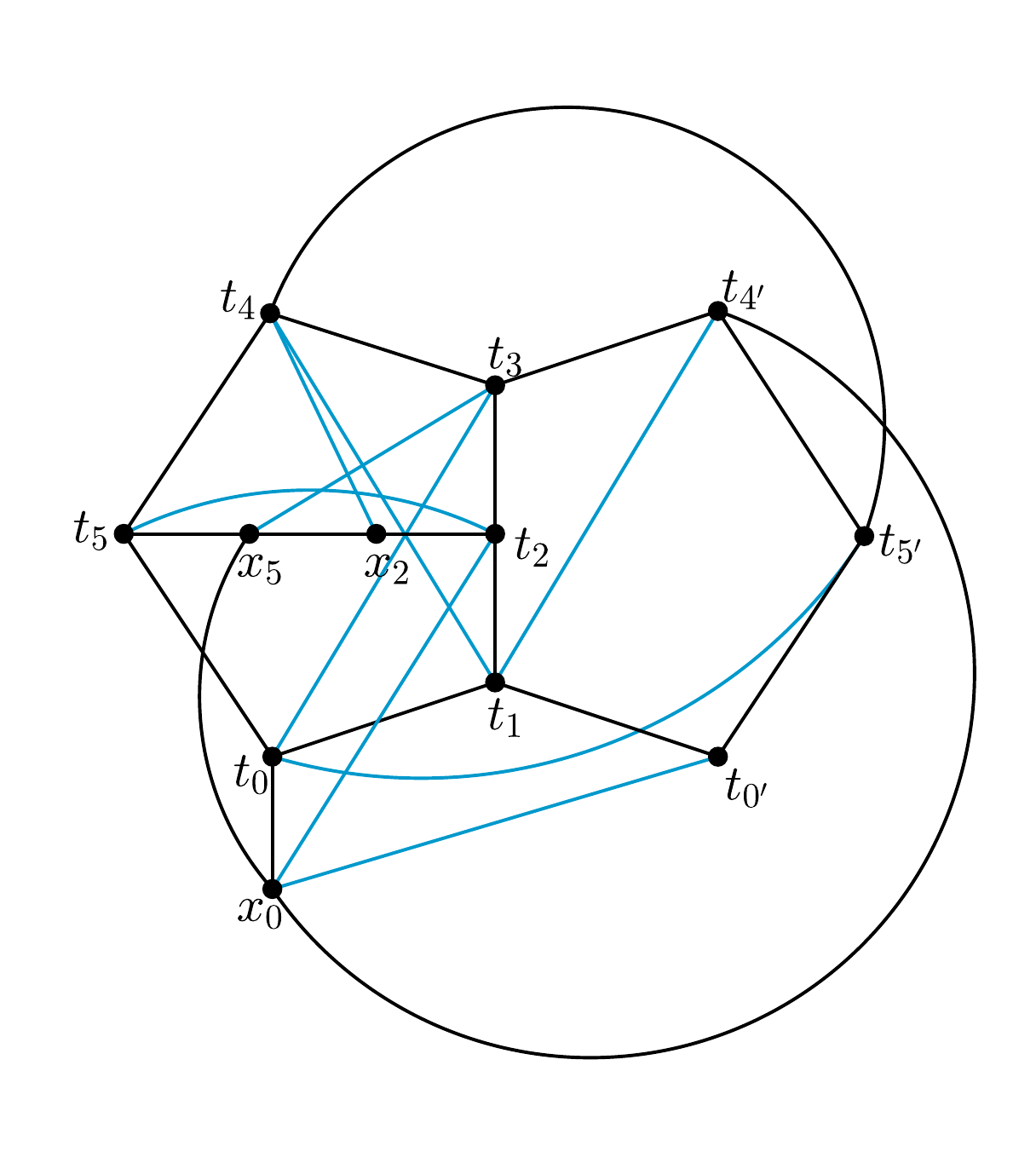} \\
\hline
Claim 16. & 
Claim 17. &
Claim 18. \\
\includegraphics[scale=0.35]{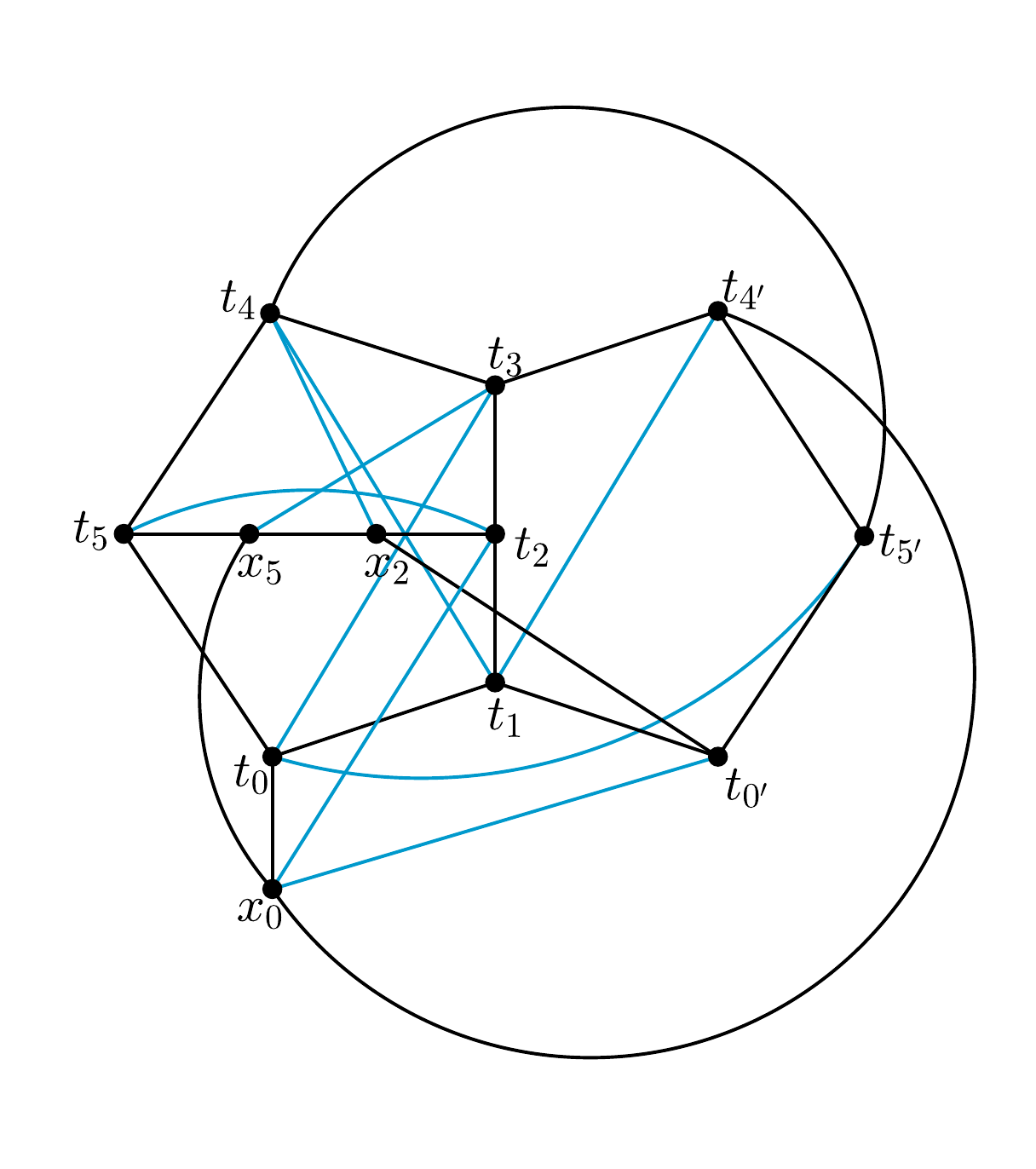}&
\includegraphics[scale=0.35]{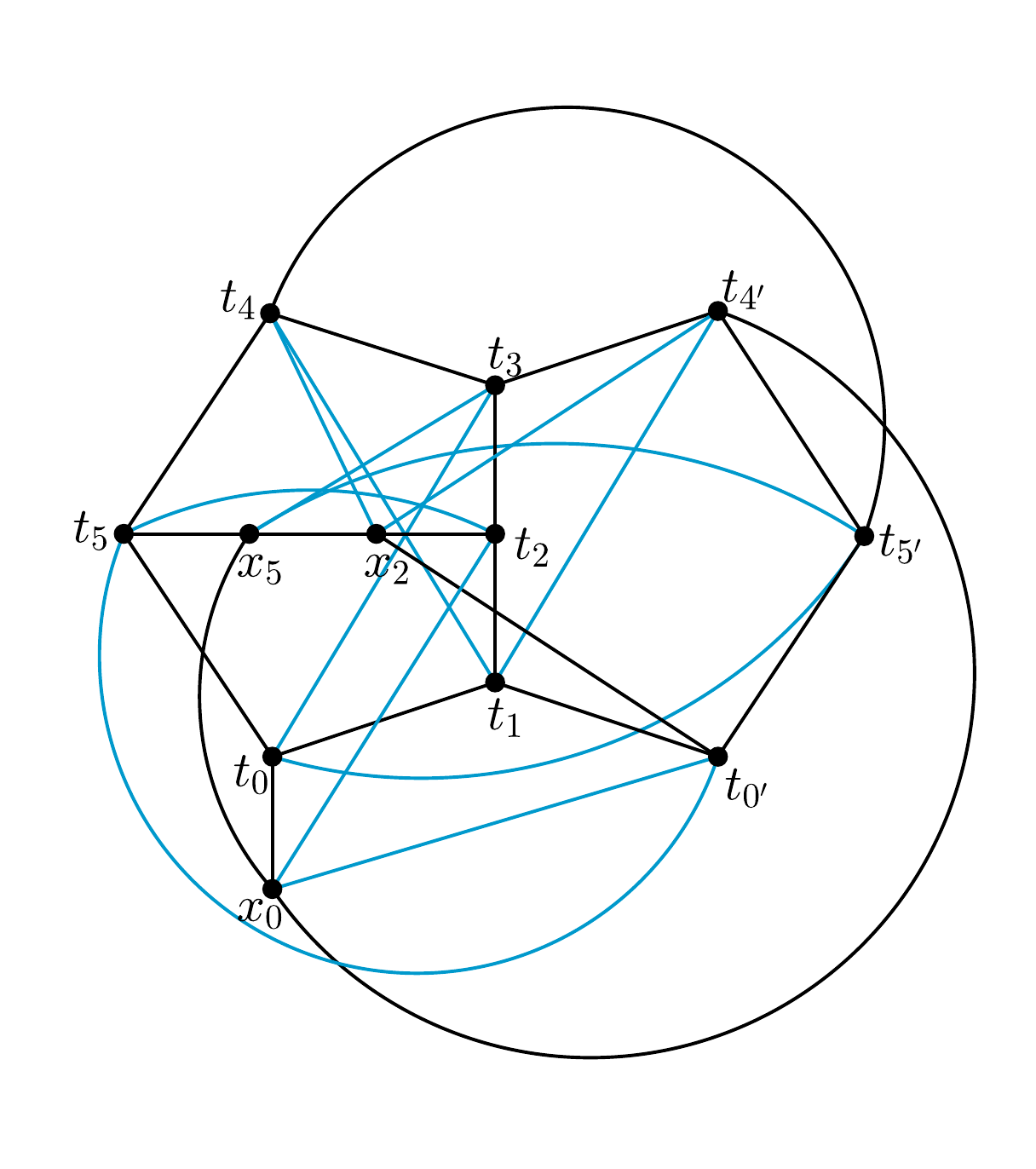} &
\includegraphics[scale=0.7]{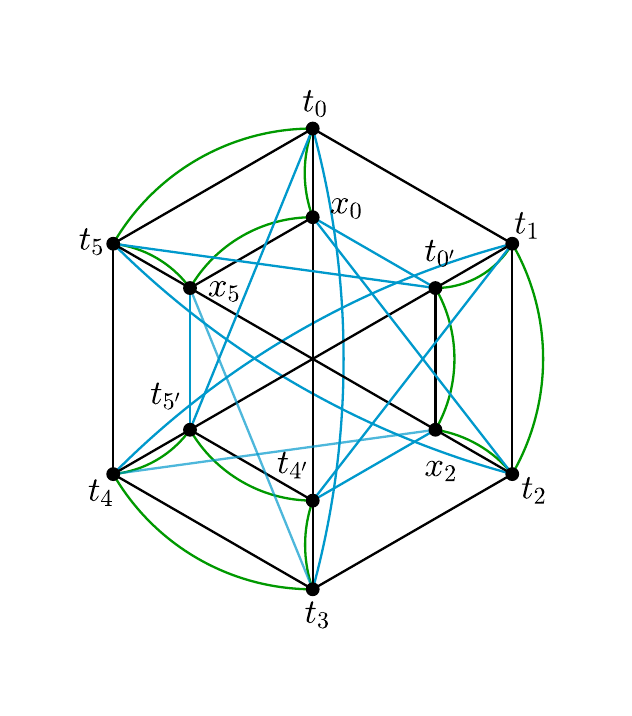}   \\
\hline
\end{tabular}
\end{table}
\vspace{1cm}
\end{appendices}

\clearpage

\subsection*{Acknowledgements}
Research supported by PAPIIT-M{\' e}xico (Projects IN104915, IN106318), CONACyT-M{\' e}xico (Project 166306) and CIC-UNAM (Academic Exchange Project: “Gráficas y Triangulaciones”. 2019).
.

\begin{thebibliography}{11}
\bibitem{Arocha} J. Arocha, J., N. Garc\'{i}a-Col\'{i}n and I. Hubard. \textit{Reconstructing surface triangulations by their intersection matrices}. \textbf{Discussiones Mathematicae Graph Theory}, \textbf{35}: 483-491, 2015. doi:10.7151/dmgt.1816.

\bibitem{BM2001_Existence} Bojan Mohar. \textit{Existence of Polyhedral Embeddings of Graphs}. \textbf{Combinatorica}, \textbf{21}: 395--401, 2001. doi:10.1007/s004930100003.

\bibitem{B1995} Bojan Mohar.  \textit{Uniqueness and minimality of large face-width embeddings of graphs}. \textbf{Combinatorica}, \textbf{15}: 541--556, 1995.

\bibitem{BM2001_Flexibility} Bojan Mohar and Neil Robertson. \textit{Flexibility of Polyhedral Embeddings of Graphs in Surfaces}. \textbf{Journal of Combinatorial Theory}, \textbf{83}: 38-57, 2001. doi:10.1006/jctb.2001.2036

\bibitem{BM2006} Bojan Mohar and Andrej Vodopivec. \textit{On polyhedral embeddings of cubic graphs}. \textbf{Combinatorics, Probability and Computing}, \textbf{15}: 877-893, 2006. doi:10.1017/S0963548306007607.


\bibitem{R1974} Gerhard Ringel.  \textit{Map Color Theorem}. \textbf{Springer-Verlag}, 1974.

\bibitem {RV1990} Neil Robertson and  Richar Vitray. \textit{Representativity of surface embeddings}.  Paths, Flows, and VLSI-Layout. Springer-Verlag, 1990 . pp. 293--328.

\bibitem {S1996} Paul Seymour and Robin Thomas.  \textit{Uniqueness of highly representative surface embeddings}. \textbf{J. Graph Theory}, \textbf{23}: 337--349, 1966.

\bibitem {W1933} Hassler Whitney. \textit{$2$-isomorphic graphs}. \textbf{Amer. Math. J.}, \textbf{55}: 245--254, 1933.

\end{thebibliography}
\end{document}